\documentclass[letter,11pt]{amsart}
\usepackage{amsmath,amsthm,amssymb,amsfonts}
\usepackage[pdftex]{graphicx}
\usepackage{tikz}
\usepackage[margin=1in,top=1in]{geometry}
\usepackage[latin1]{inputenc}
\usepackage{enumitem}
\usepackage{hyperref}

\usepackage{etoolbox}
\patchcmd{\footnotemark}{\stepcounter{footnote}}{\refstepcounter{footnote}}{}{}

\newcommand{\R}{\mathbb{R}}
\newcommand{\vep}{\varepsilon}

\newcommand{\tr}[1]{\operatorname{Tr}(#1)}
\newcommand{\chara}[1]{\chi_{#1}}
\renewcommand{\div}{\operatorname{div}}
\newcommand{\id}{I}

\newcommand{\ellmat}{\mathcal{A}_{\lambda, \Lambda}}
\newcommand{\ellvec}{\mathcal{O}_{\lambda,\Lambda,\mu}}
\newcommand{\env}{\mathcal{O}}
\newcommand{\mcal}{\mathcal{M}}
\newcommand{\ncal}{\mathcal{N}}
\newcommand{\tcal}{\mathcal{T}}
\newcommand{\scal}{\mathcal{S}}

\newcommand{\usc}{u.s.c.\,}
\newcommand{\lsc}{l.s.c.\,}
\newcommand{\uppers}{USC}
\newcommand{\lowers}{LSC}
\newcommand{\subsol}{\mathcal{U}}

\newtheorem{thm}{Theorem}[section]
\newtheorem{prop}[thm]{Proposition}
\newtheorem{cor}[thm]{Corollary}
\newtheorem{lem}[thm]{Lemma}
\theoremstyle{definition}
\newtheorem{defn}[thm]{Definition}
\newtheorem{rem}[thm]{Remark}

\allowdisplaybreaks

\numberwithin{equation}{section}

\author[I. U. Erneta]{I\~{n}igo U. Erneta}
    \address{I\~{n}igo U. Erneta. Department of Mathematics\\
    Rutgers University\\
110 Frelinghuysen Rd., Piscataway, NJ 08854, USA}
    \email{iu40@math.rutgers.edu}

\title{Fully nonlinear oblique transmission problems: Well-posedness}

\begin{document}

\begin{abstract}
We introduce elliptic transmission problems involving fully nonlinear equations in both the interior bulk equations and the interface transmission condition.
In contrast to all previous works, our transmission condition depends on both the normal and tangential derivatives of the solution from each side of the interface.
Our main result establishes the existence and uniqueness of viscosity solutions for constant-coefficient oblique transmission problems with flat interfaces.
\end{abstract}

\maketitle

\tableofcontents

\section{Introduction}

Transmission problems provide a general mathematical framework to describe phenomena across heterogeneous domains.
Specific examples from the applied sciences include 
the modeling of composite materials in Physics, the description of diffusion across cell membranes in Biology, or regime-switching models in Economics.
In each of these problems, the configuration space is made up of several components with different properties.
While a variational ``divergence-form'' theory for such problems is by now classical, an equally satisfactory non-variational approach has yet to be established.
The present paper develops a methodology  aimed at filling this longstanding gap.

\subsection{Framework and motivation}
Given smooth bounded open connected sets $\Omega^{+} \subsetneq \Omega \subseteq \R^n$, we define $\Omega^{-} := \Omega \setminus \overline{\Omega^{+}}$ as the complement of $\overline{\Omega^{+}}$ inside $\Omega$.
The two \emph{phases} $\Omega^{\pm}$ are separated by the \emph{interface} $\Gamma = \partial \Omega^{+} \cap \Omega$, which is a smooth hypersurface, and we have the partition $\Omega = \Omega^{+} \cup \Gamma \cup \Omega^{-}$.
For ``global'' functions $u \colon \Omega \to \R$, we use the superscript notation
$u^{\pm} := u \chara{\Omega^{\pm} \cup \Gamma}$
to denote the restrictions to each phase, up to the interface.

\medskip

We are interested in the transmission problem
\begin{equation}
\label{gen:dirichlet}
\begin{cases}
F^{+}(D^2u, x) = f^{+}(x) & \text{ in } \Omega^{+},\\
F^{-}(D^2u, x) = f^{-}(x) & \text{ in } \Omega^{-},\\
G(\nabla u^{+}, \nabla u^{-}, x) =g(x) & \text{ on } \Gamma,\\
u^{+} = u^{-} & \text{ on } \Gamma,\\
u = \phi & \text{ on } \partial \Omega,
\end{cases}
\end{equation}
where $F^{\pm}$ and $G$ are (nonlinear) operators, $f^{\pm}$ and $g$ are functions in $\Omega^{\pm}$ and $\Gamma$, respectively, and $\phi$ is a boundary datum on $\partial \Omega$.
The second order equations in $\Omega^{\pm}$ are called \emph{bulk equations}, while the first order equation on $\Gamma$ is known in the literature as a \emph{transmission condition}.
The second equation on $\Gamma$ simply expresses the continuity of $u$ across the interface and will be assumed implicitly throughout the text.

\medskip

Problem~\eqref{gen:dirichlet} arises naturally when studying stochastic processes across multiple regimes.
As a guiding example, consider a diffusion process $\{X_t\}_{t \geq 0}$ in the two-phase domain $\Omega$.
Suppose that, upon reaching the interface $\Gamma$, the process can undergo either of the following behaviors:
\begin{itemize}
\item reflect back into the phase from which it came;
\item transmit across the interface into the adjacent phase;
\item propagate along the interface in a prescribed tangential direction.
\end{itemize}
Let $\nu$ denote the unit normal vector to the interface $\Gamma$, pointing into $\Omega^{+}$.
Assume that a payoff $\phi$ is received when the process $X_t$ exits the domain through $\partial\Omega$, at which point the process is stopped.
Then, at least heuristically, the expected payoff $u(x)$ associated with a process starting at $x \in \overline{\Omega}$ should solve~\eqref{gen:dirichlet}, together with a linear transmission condition
\begin{equation}
\label{trans1}
\gamma_{\nu}^{+} \partial_{\nu}u^{+} - \gamma_{\nu}^{-} \partial_{\nu} u^{-} 
+ \gamma_{\Gamma} \cdot \nabla_{\Gamma} u 
= 0 
\quad \text{ on } \Gamma,
\end{equation}
where the coefficients can be interpreted as follows:
\begin{itemize}
\item $\gamma_{\nu}^{+} >0$ is the probability of moving into $\Omega^{+}$ in the normal direction $\nu$;
\item $\gamma_{\nu}^{-} > 0$ is the probability of moving into $\Omega^{-}$ in the normal direction $-\nu$;
\item $\gamma_{\Gamma}$ is a tangent vector to the hypersurface $\Gamma$ and $|\gamma_{\Gamma}| = 1 - \gamma_{\nu}^{+} - \gamma_{\nu}^{-}$ is the probability of sliding along $\Gamma$ in the tangential direction $\gamma_{\Gamma}$.
\end{itemize}
Note here that since $u$ is continuous across the interface, assuming $u^{+}$ and $u^{-}$ to be differentiable up to $\Gamma$, the tangential gradient $\nabla_{\Gamma} u := \nabla [ u|_{\Gamma}]$ is well defined and independent of the restriction.

\medskip

It is more convenient to instead write the linear transmission~\eqref{trans1} in the form
\begin{equation}
\label{trans2}
\gamma^{+} \cdot \nabla u^{+} - \gamma^{-} \cdot \nabla u^{-} = 0 \quad \text{ on } \Gamma,
\end{equation}
where $\gamma^{\pm} = \gamma^{\pm}(x)$ are vectors in $\R^n$ with
\begin{equation}
\label{prob:ell}
\gamma^{\pm} \cdot \nu > 0 \quad \text{ on } \Gamma.
\end{equation}
Thus, the transmission condition~\eqref{trans2} can be interpreted as a coupling of oblique derivatives from each side of $\Gamma$.
Assumption~\eqref{prob:ell} is a natural ellipticity condition which prevents the process from degenerating to a lower dimensional motion on the interface. 

\medskip

Our model~\eqref{gen:dirichlet} encompasses most problems of interest, such as:
\begin{itemize}
\item \textbf{Expected payoff}.
Assuming a linear anisotropic diffusion on each phase, the expected payoff satisfies the linear transmission problem
\[
\begin{cases}
\tr{A^{\pm}(x) D^2 u} = f^{\pm}(x) & \text{ in } \Omega^{\pm},\\
\gamma^{+}(x) \cdot \nabla u^{+}
- \gamma^{-}(x) \cdot \nabla u^{-}
 = g(x) & \text{ on } \Gamma,
\end{cases}
\]
where $f^{\pm}$ is the running cost of remaining in $\Omega^{\pm}$, while $g$ is the ``toll'' for crossing $\Gamma$.
\item \textbf{Stochastic control}.
For a one-parameter family of stochastic processes $\{X_t^{\alpha}\}$, the maximum expected gain when choosing a control for $\alpha$ will satisfy the linear programming or Bellman-type equations
\[
\begin{cases}
\displaystyle
\sup_{\alpha} \, 
 \Big[
\tr{A_{\alpha}^{\pm}(x) D^2 u} - f_{\alpha}^{\pm}(x)
 \Big] = 0
& \text{ in } \Omega^{\pm},\\
\displaystyle
\sup_{\alpha}  \, \Big[
\gamma^{+}_{\alpha}(x) \cdot \nabla u^{+}
- \gamma^{-}_{\alpha}(x) \cdot \nabla u^{-}
- g_{\alpha}(x)
\Big] = 0
& \text{ on } \Gamma.
\end{cases}
\]
\item \textbf{Differential games}.
For two-player games with strategies described by two-parameter processes $\{X^{\alpha\beta}_{t}\}$, the optimal payoff for a player satisfies the Isaacs-type equations
\[
\begin{cases}
\displaystyle
\inf_{\beta} \sup_{\alpha}
\, \left[
\tr{A_{\alpha\beta}^{\pm}(x) D^2 u} - f_{\alpha\beta}(x)
 \right] = 0
& \text{ in } \Omega^{\pm},\\
\displaystyle
\inf_{\beta}\sup_{\alpha}  \, \left[
\gamma^{+}_{\alpha\beta}(x) \cdot \nabla u^{+}
- \gamma^{-}_{\alpha\beta}(x) \cdot \nabla u^{-}
- g_{\alpha\beta}(x)
\right] = 0
& \text{ on } \Gamma.
\end{cases}
\]
\end{itemize}

\medskip

All linear operators featured in the nonlinear problems above should satisfy appropriate ellipticity assumptions.
These conditions involve three natural ellipticity constants:
$0 < \lambda \leq \Lambda$ and $\mu \geq 0$.
Namely, the symmetric matrices $A^{\pm}(x)$ are assumed to be uniformly elliptic with
\[
\lambda \id \leq A^{\pm}(x) \leq \Lambda \id \qquad \text{ for } x \in \Omega^{\pm},
\]
while the transmission coefficient vectors $\gamma^{\pm}(x)$ are \emph{uniformly oblique}, i.e., they have uniformly elliptic normal components
\[
\lambda \leq \gamma^{\pm}_{\nu}(x) := \gamma^{\pm}(x) \cdot \nu(x) \leq \Lambda \qquad \text{ for } x \in \Gamma,
\]
and bounded tangential ones
\[
|\gamma^{\pm}(x) - \gamma^{\pm}_{\nu}(x) \nu(x)| \leq \mu \qquad \text{ for } x \in \Gamma.
\]
Our nonlinear operators $G$ satisfy analogous ellipticity conditions, as explained in Section~\ref{sec:visc}.

\medskip

From the point of view of applications, it is also interesting to allow obliqueness to degenerate on one side of the interface.
Namely, we will consider nonlinear transmission operators of the form
\[
G(\nabla u^{+}, \theta \nabla u^{-}, x)
\]
where $0 \leq \theta \leq 1$ is a constant \emph{interpolation parameter}.
Note that  the transmission condition on $u^{-}$ seems to disappear when $\theta$ vanishes.
Somewhat surprisingly, our results and estimates below are all independent of this parameter.
We give the intuition behind this phenomenon in Section~\ref{sec:visc} (see~Proposition~\ref{prop:deg} and Remark~\ref{rem:intuition}).

\medskip

\subsection{State of the art}

Our present work is concerned with the well-posedness of the above models, a fundamental question dating back to the seminal work of Picone~\cite{P}.

\medskip

The mathematical analysis of transmission problems has a rich history, particularly within the framework of linear divergence-form operators.
The classical variational theory was initiated by J.-L.~Lions~\cite{JLL}, where solutions are understood in the weak sense via energy formulations. 
In variational configurations, the transmission condition across the interface involves matching the conormal derivatives
\[
A^{+}(x)\nabla u^{+} \cdot \nu - A^{-}(x)\nabla u^{-} \cdot \nu = g(x) \quad \text{on } \Gamma,
\]
which arises naturally from integration by parts.
We note that conormal derivatives are particular cases of oblique derivatives.
For further details on variational problems, we refer the reader to the monograph of Borsuk~\cite{B} and the references therein.

\medskip

In the non-variational setting, the study of boundary value problems with oblique derivatives has its modern roots in the viscosity theory for the Neumann problem. 
The well-posedness for non-divergence form equations with oblique boundary conditions was established by Lieberman~\cite{LPerron} by adapting Perron's method; see also~\cite{L} and compare with the work of Li-Zhang~\cite{LZ} on fully nonlinear equations.

\medskip

The application of viscosity solutions to transmission problems, however, is a more recent development.
To the best of our knowledge, there are only three previous works dealing with this subject.
Motivated by free boundary problems, De Silva-Ferrari-Salsa~\cite{DFSfb, DFS} pioneered this direction by establishing the viscosity framework.
Subsequently, Soria Carro-Stinga~\cite{SS} developed an existence and uniqueness theory of viscosity solutions.
All these works share a fundamental limitation: the transmission operator $G$ is assumed to be linear and independent of the tangential derivatives of the solution, i.e., it is a coupling of  Neumann conditions.
For convenience, we refer to such problems as \emph{Neumann transmission problems} to distinguish them from our oblique ones.

\medskip

The results of our paper constitute the first treatment of transmission problems with genuinely oblique interface conditions, where the transmission operator depends nonlinearly on both the normal and tangential components of the gradient on each side of the interface.
This setting introduces significant geometric and analytical challenges, as the interface condition can no longer be simplified or decoupled from the bulk equations through standard barrier arguments.

\subsection{Main result}

In this first paper, we focus on the case when $\Gamma$ is flat, while curved interfaces will be treated in a future work.
It is convenient to introduce a special notation for the flat case.
Below, we write $B_R = \{x \in \R^{n} \colon |x| < R\}$ for the ball of radius $R > 0$ centered at the origin.
The half-balls are denoted by
\[
B_{R}^{+} := B_R \cap \{x_n > 0\} \qquad \text{ and } \qquad B_{R}^{-} := B_R \cap \{x_n < 0\},
\]
while the associated flat interface is written
\[
T_{R} := B_R \cap \{x_n = 0\}.
\]

\medskip

Our main result establishes the well-posedness of the Dirichlet problem for constant-coefficient transmission problems.
For the exact definition of ellipticity and viscosity solution, see Section~\ref{sec:visc}:

\begin{thm}[Existence and uniqueness]
\label{thm:wellposed}
Let $\{F^{\pm}, G\}$ be constant-coefficient uniformly elliptic operators.
Let $f^{\pm} \in C(\overline{B}_1^{\pm})$, $g \in C(\overline{T}_1)$, and $\phi \in C(\partial B_1)$.
Let $0 \leq \theta \leq 1$ be a constant.

Then there exists a unique viscosity solution $u \in C(\overline{B}_1)$ to the transmission problem
\[
\begin{cases}
F^{\pm}(D^2u) = f^{\pm}(x) & \text{ in } B_1^{\pm},\\
G(\nabla u^{+}, \theta \nabla u^{-}) =g(x) & \text{ on } T_1,\\
u = \phi & \text{ on } \partial B_1.
\end{cases}
\]
\end{thm}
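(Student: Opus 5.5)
The plan is the classical route for viscosity solutions: uniqueness from a comparison principle, existence from Perron's method. Everything hinges on proving comparison for the transmission problem, in the form: if $u$ is a viscosity subsolution and $v$ a viscosity supersolution with $u \leq v$ on $\partial B_1$, then $u \leq v$ in $B_1$. I will first reduce to the case where one function has a strict inequality. Since the operators have constant coefficients, I can perturb $u$ by a small explicit barrier. A natural choice is
\[
u_\vep = u + \vep\big(\psi(x) - C\big),
\]
where $\psi$ is a quadratic-type function. It should be strictly subharmonic for every Pucci-type operator in the class, i.e. $\mathcal{M}^-(D^2\psi)>0$. It should also satisfy $\partial_n\psi^{+} > 0 > \partial_n\psi^{-}$ on $T_1$, with the tangential gradient vanishing there. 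A candidate is $\psi = |x|^2 + \kappa|x_n|$.

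By the ellipticity of $G$ (monotone increasing in $p^+\cdot e_n$ and decreasing in $p^-\cdot e_n$, with bounded sensitivity $\mu$ to tangential parts), this makes $u_\vep$ a strict subsolution of the interface condition. When $\theta=0$, the $p^-$ slot loses its dependence, so the strictness must come from the $\partial_n\psi^+$ term alone. This is presumably where Proposition~\ref{prop:deg} enters, and I would use it to handle $\theta$ uniformly.

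The core step is comparing a strict subsolution with a supersolution. In the bulk, I would use the standard Jensen–Ishii doubling of variables, $\sup\,[u(x)-v(y)-|x-y|^2/2\delta]$, together with uniform ellipticity and constant coefficients (no $x$-dependence, so no modulus is needed). The difficulty is when maximum points accumulate on $T_1$.

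For flat interfaces I would exploit translation invariance in the tangential directions. I would first show that sup- and inf-convolutions in $x'$ only, $u^\delta(x)=\sup_{y'}\,[u(y',x_n)-|x'-y'|^2/\delta]$, preserve the sub/supersolution properties, since the equations are translation invariant along $T_1$ and $f^{\pm},g$ are continuous. This makes $u$ semiconvex in $x'$. I would then perform a full sup-convolution to get semiconvexity/semiconcavity near the interface. At a touching point on $T_1$ of the difference $w=u_\vep-v$, semiconvexity lets me produce one-sided test functions of the form $\varphi(x)=\ell(x)+a^{+}x_n^{+}-a^{-}x_n^{-}$ touching both $u$ and $v$. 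The transmission inequalities then give
\[
G(p^+,\theta p^-)>g \geq G(q^+,\theta q^-)
\]
with the normal derivatives ordered the wrong way, which is a contradiction by the monotonicity of $G$.

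I expect the main obstacle to be this interface step. Viscosity test functions for the interface are piecewise, and the tangential dependence of $G$ prevents the usual trick of pushing the maximum off $\Gamma$ with a barrier in $x_n$ alone. One must guarantee that the tangential gradients of the test functions for $u$ and $v$ coincide, which is where the flat geometry and the tangential regularization are essential.

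Existence then follows from Perron's method. I would take $u=\sup\{w \text{ subsolution}, w\le \overline{w}\}$, with lower and upper barriers attaining $\phi$ continuously on $\partial B_1$. These barriers come from the smoothness of $\partial B_1$: for boundary points away from $T_1$, use standard Pucci barriers. At the two points where $\partial B_1$ meets $\{x_n=0\}$, I would use barriers built from $\psi$ above and the strict interface inequality. The usual bump construction shows that $u^*$ is a subsolution and $u_*$ a supersolution. At interface points the bump must be built from the piecewise test function, using the definition from Section~\ref{sec:visc}. Comparison then yields $u^*\le u_*$, so $u$ is continuous and is the unique solution.
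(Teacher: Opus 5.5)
There is a genuine gap exactly where you expected one: producing test functions for $u$ and $v$ at a common interface point.

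\textbf{The interface step of comparison.} A full sup-convolution does not preserve the interface subsolution property. The maximizer in the convolution can lie off $T$, and the translated test function then touches $u$ at a bulk point with its kink passing through that point, which gives no information on $G$. Concretely, take $F^{\pm}=\Delta$, $G(\xi^+,\xi^-)=\xi^+_n-2\xi^-_n$, $\theta=1$ and zero data. Then $u=2(x_n)_+-(x_n)_-$ is a solution, but $\sup_y[u(y)-|x-y|^2/\delta]=2x_n+\delta$ for $|x_n|<\delta/2$. Testing with this affine function gives $G=-2<0$ on $T$.

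This is why only horizontal convolutions are used (Lemma~\ref{lem:envelopes}). They give semiconvexity in $x'$ only. So Alexandrov/Jensen, or ABP on the trace, yields points $y_0\in T$ where the traces are twice differentiable, but says nothing about the one-sided normal behaviour of $u^\vep$, $v_\vep$ there. Nothing then guarantees that some $\ell+a^+(x_n)_+-a^-(x_n)_-$ touches $u$ from above and $v$ from below at $y_0$ with a common tangential part. Doubling variables only in $x'$ does not help: it just reduces to a horizontal translate of $v$ touching $u$ from above on $T$, which is the same problem.

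The missing ingredient is regularity in the normal direction. The paper does three things:
\begin{enumerate}
\item It replaces $u^\vep$, $v_\vep$ on each half-ball by solutions of the Dirichlet problems for $F^\pm$ with the same interface trace. The spherical data are first Lipschitz-regularized, and then $k\to\infty$.
\item It applies the Ma--Wang pointwise boundary $C^{1,\alpha}$ estimate at $y_0$ to get piecewise linear expansions $L_{\overline u}$, $L_{\overline v}$.
\item It uses the bulk ABP, with $P$ made a strict bulk supersolution via Lemma~\ref{lem:trick}, to keep the touching at $y_0$ after the replacement.
\end{enumerate}
Only then does your ``wrong ordering of normal derivatives'' argument apply.

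Your strict perturbation by $\psi=|x|^2+\kappa|x_n|$ is fine for $\kappa>2\mu/\lambda$; note that $\nabla'\psi=2x'$ does not vanish on $T_1$. Combined with a result like Theorem~\ref{thm:comp}, it would let you conclude at a positive maximum of $u_\vep-v$ without the transmission ABP of Theorem~\ref{thm:abp}. But it does not address this step.

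\textbf{The corner barriers.} There is a second gap at $x_0\in\partial B_1\cap\{x_n=0\}$, which is an $(n-2)$-sphere, not two points. For general data (e.g. $\theta=0$, $G=\tcal^{+}$, $g\equiv-1$), the supersolution condition on $T_1$ forces $\ncal^{+}(\nabla h^{+})\le -c<0$ near $x_0$, hence $h^{+}_{x_n}\le -c/\lambda$. Suppose $h^{+}$ is $C^1$ up to $x_0$, as any barrier built smoothly from $|x|^2+\kappa|x_n|$ is. Rotate so that $x_0=e_1$. The points of $\partial B_1$ with $x''=0$, $x_n=s>0$ satisfy $x-x_0\approx -\tfrac{s^2}{2}e_1+se_n$. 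So $h\approx h^{+}_{x_n}(x_0)\,s<0$ for small $s$, contradicting $h>0$ on $\overline{B}_1\setminus\{x_0\}$.

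The barrier must therefore be only H\"older at the corner. It should be homogeneous of a small degree $\alpha$, chosen in terms of $\lambda/\mu$, so that the normal slope $r^{\alpha-1}[\zeta^{\pm}]'(0)$ dominates the tangential one $\alpha r^{\alpha-1}\zeta(0)$. This is the Miller--Lieberman-type barrier $r^{\alpha}\zeta(\phi)+\tfrac{c_1}{2}|x''|^2$ of Lemma~\ref{lem:barrier}.
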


\medskip

We defer regularity questions to our companion paper~\cite{E}, where we obtain optimal piecewise $C^{1,\alpha}$ regularity of viscosity solutions.
The above existence and uniqueness result is crucial for the regularity theory of variable-coefficient problems.
There, we use Theorem~\ref{thm:wellposed} to construct barriers satisfying suitable constant-coefficient inhomogeneous transmission problems, which play a key role in the perturbation theory from~\cite{E}.

\medskip

We obtain the unique viscosity solution by Perron's method, adapting the approach of Soria~Carro-Stinga~\cite[Section 4]{SS}.
This technique requires three basic ingredients: an Alexandrov-Bakelman-Pucci (ABP) maximum principle, a comparison principle for semicontinuous functions, and the construction of appropriate barriers.
Next, we comment briefly on each of these elements.

\medskip

The classical ABP estimate for non-divergence form equations follows by a well-known geometric argument involving linear polynomials that touch the solution from one side.
For constant-coefficient Neumann transmission problems, the proof of the analogous result can be reduced to the classical one by subtracting a simple barrier.
By contrast, the proof of our ABP maximum principle for genuinely oblique transmission problems (Theorem~\ref{thm:abp}) is more involved and requires a new argument based on piecewise linear functions (see Section~\ref{sec:abp}).
Moreover, our ABP estimate also covers variable-coefficient transmission conditions and is therefore essential to the regularity theory for such problems in~\cite{E}.

\medskip

A comparison principle for Neumann transmission problems had already appeared, first for continuous functions in~\cite{DFSfb,DFS}, and then for semicontinuous ones in~\cite{SS}.
Our proof of the corresponding result for oblique problems (Theorem~\ref{thm:comp}) follows the same strategy,
combining a delicate regularization procedure with the ABP estimate, and then invoking the punctual boundary regularity for fully nonlinear PDE of Ma-Wang~\cite{MW}.
We point out that some additional care is required at one technical point of the original argument (see Remark~\ref{rem:mistake}).
Once the comparison principle is established, uniqueness follows.

\medskip

We need barriers to show that the boundary condition is attained.
For this, we seek inspiration in classical barriers for the oblique derivative problem, which can be understood as a transmission problem with $\theta = 0$ (see Proposition~\ref{prop:deg}).
The most delicate point corresponds to those boundary points in the closure of the interface.
Here, we construct piecewise smooth barriers that extend Lieberman's~\cite{L} barrier based on earlier work of Miller~\cite{Miller}.
The easier case away from the interface can be treated by modifying a simple barrier of Li-Zhang~\cite{LZ}.

\medskip

To conclude, we remark that we do not yet have an existence and uniqueness result analogous to Theorem~\ref{thm:wellposed} for variable-coefficient transmission problems.
The only missing ingredient is a comparison principle, as our regularization procedure does not seem to extend to variable coefficients.
Such a result might perhaps be obtained by adapting the method of doubling variables described in the notes of Crandall-Ishii-Lions~\cite{CIL}; however, we were unable to implement this approach.
We refer the reader to~\cite{CIL} and the references therein for further background.

\subsection{Notation}

In general, given an open set $\Omega \subset \R^n$, we denote its bulk phases by
\[
\Omega^{+} = \Omega \cap \{x_n > 0\}
\qquad \text{ and }
\qquad \Omega^{-} = \Omega \cap \{x_n < 0\},
\]
and its flat interface by
\[
T =
\Omega \cap \{x_n = 0\} = \partial \Omega^{\pm} \cap \Omega.
\]
For $x_0 \in \R^{n}$ and $R> 0$, we denote the open balls in $\R^{n}$ by
\[
B_{R}(x_0) = \{x \in \R^n \colon |x - x_0| < R\} \qquad \text{ and } \qquad B_R = B_{R}(0).
\]
When $x_0 \in T$, we also define the open balls on the interface as
\[
T_R(x_0) = B_R(x_0) \cap \{x_n = 0\}
\qquad
\text{ and }
\qquad T_R = T_R(0).
\]

\medskip

For a continuous function $u$ in $\Omega$, we define its restrictions to each phase, up to the interface, by
\[
u^{+} := u \chara{\overline{\Omega}^{+} \cap \Omega} = u \chara{\Omega^{+} \cup T}
\qquad \text{ and } \qquad 
u^{-} := u \chara{\overline{\Omega}^{-} \cap \Omega} = u \chara{\Omega^{-} \cup T },
\]
where $\chara{E}$ is the characteristic function of the set $E \subset \R^{n}$.
The positive and negative parts of $u$ are
\[
u_{+} := \max\{u, 0\} \qquad \text{and } \qquad u_{-} := \max\{-u, 0\}.
\]
Note our use of superscripts for restrictions, while subscripts denote the positive and negative parts.

\medskip

We write vectors in terms of tangential and normal components as $x = (x', x_n) \in \R^{n-1} \times \R$.

\medskip

Derivatives are written as subscripts, e.g., $u_{x_i}$, $u_{x_i x_j}$, and so on.
We also use the gradient vector $\nabla u = (u_{x_1}, \ldots, u_{x_n})$
and the tangential gradient $\nabla' u = (u_{x_1}, \ldots, u_{x_{n-1}}, 0)$.

\medskip

The space of symmetric $n \times n$ matrices is denoted $\scal_n \subset \R^{n\times n}$.
The set of symmetric matrices $A \in \scal_n$ satisfying $\lambda \id \leq A \leq \Lambda \id$ is written $\ellmat$. 
For matrices $N \in \R^{n\times n}$, we use the operator norm
\[
\|N\| = \sup_{x \in B_1 \subset \R^{n}} |N x|.
\]

The set of upper semicontinuous (u.s.c.) functions $u \colon \Omega \to \R$ is denoted by $\uppers(\Omega)$,
and the set of lower semicontinuous (l.s.c.) ones by $\lowers(\Omega)$.

\medskip

A constant is \emph{universal} if it depends only on $n$, $\lambda$, $\Lambda$, and $\mu$.
We note in particular that a universal constant does not depend on the interpolation parameter $0 \leq \theta \leq 1$.

\subsection{Outline}
Section~\ref{sec:visc} introduces the framework of viscosity solutions.
In Section~\ref{sec:abp} we prove an ABP maximum principle for oblique transmission problems.
Section~\ref{sec:unique} is devoted to establishing a comparison principle, uniqueness being a corollary.
In Section~\ref{sec:perron} we construct the unique viscosity solution via Perron's method.

\medskip


\section{Viscosity solutions to transmission problems}
\label{sec:visc}

Let $\Omega \subset \R^n$ be an open set.
For a constant $0 \leq \theta \leq 1$, we consider the transmission problem
\begin{equation}
\label{eq:nl}
\begin{cases}
F^{\pm} (D^2 u, x) = f^{\pm}(x) & \text{ in } 
\Omega^{\pm},\\
G(\nabla u^{+}, \theta \nabla u^{-}, x) = g(x)
 & \text{ on } T,
\end{cases}
\end{equation}
where the operators $F^{\pm}$ and $G$ are \emph{uniformly elliptic}, in the sense that there are constants $0 < \lambda \leq \Lambda$ and $\mu \geq 0$ such that:
\begin{itemize}
\item For any symmetric matrices $M$ and $N$, with $N$ nonnegative definite, we have that
\[ 
\lambda \|N\| \leq F^{\pm}(M + N,x) - F^{\pm}(M,x) \leq \Lambda \|N\|;
\]
\item
For any vectors $\xi^{\pm}$ and $\eta^{\pm} = (\eta'^{\pm}, \eta_n^{\pm})$ in $\R^{n} = \R^{n-1} \times \R$, with $\eta_n^{\pm} \geq 0$, we have that
\[
\lambda \eta^{+}_n - \Lambda \eta^{-}_n 
- \mu (|\eta'^{+}| + |\eta'^{-}|)
\leq G(\xi^{+} + \eta^{+}, \xi^{-} + \eta^{-}, x) - G(\xi^{+}, \xi^{-}, x) \leq \Lambda \eta^{+}_n - \lambda \eta^{-}_n 
+ \mu( |\eta'^{+}| + |\eta'^{-}|).
\]
\end{itemize}

\medskip

\begin{rem}
In particular, the function $G(\xi^{+}, \xi^{-}, x)$ is increasing in $\xi_n^{+}$ and decreasing in $\xi_n^{-}$.
Essentially, our ellipticity condition quantifies this monotonicity in the normal components as well as the continuity in the tangential ones $\xi'^{\pm}$.
\end{rem}

\medskip

Throughout the paper, we always assume that $F^{\pm}(M, \cdot)$ and $f^{\pm}$ are continuous up to the interface, on $\Omega^{\pm} \cup T = \overline{\Omega}^{\pm} \cap \Omega$.
We also assume that $G(\xi^{+}, \xi^{-}, \cdot)$ and $g$ are continuous on $T$.

\medskip

In Section~\ref{sec:perron}, we construct solutions to~\eqref{eq:nl} via Perron's method.
This approach naturally leads to solutions in the viscosity sense, which are only continuous a priori.
When applying this strategy, it is also necessary to consider viscosity sub- and supersolutions to~\eqref{eq:nl} defined for merely semicontinuous functions.

\medskip

A continuous function $\varphi$ \emph{touches} $u \in \uppers(\Omega)$\emph{ from above} (resp. $u \in \lowers(\Omega)$ \emph{from below}) at a point $x_0 \in \Omega$ if $\varphi(x_0) = u(x_0)$ and 
\[
\varphi(x) \geq u(x) \qquad \text{(resp. } \varphi(x) \leq u(x))
\]
for all $x$ in a neighborhood of $x_0$.
It touches \emph{strictly} if the inequalities are strict for $x \neq x_0$.

\medskip

\begin{defn}[Admissible function]
We say that $\varphi \in C(\Omega)$ is \emph{admissible} if its restrictions $\varphi^{\pm}$ are $C^2$ in the interior and $C^{1,1}$ up to the interface, i.e., if
\[
\varphi^{\pm} \in C^2(\Omega^{\pm}) \cap C^{1,1}(\Omega^{\pm} \cup T).
\]
Note in particular that the tangential gradients match on the interface, i.e., $\nabla' \varphi^{+} = \nabla' \varphi^{-}$ on $T$.
\end{defn}

This class of piecewise $C^{1,1}$ test functions yields the following notion of viscosity solution:

\begin{defn}[Viscosity solutions]
We say that $u \in \uppers(\Omega)$ (resp. $u \in \lowers(\Omega)$) is a \emph{viscosity subsolution} (resp. \emph{supersolution}) to~\eqref{eq:nl} if whenever an admissible function $\varphi$ touches $u$ from above (resp. below) at $x_0 \in \Omega$ and:
\begin{itemize}
\item $x_0 \in \Omega^{\pm}$, then
\[
F^{\pm}(D^2 \varphi(x_0), x_0) \geq f^{\pm}(x_0) \qquad (\text{resp. } F^{\pm}(D^2 \varphi(x_0), x_0) \leq f^{\pm}(x_0));
\]
\item $x_0 \in T$, then
\[
G(\nabla \varphi^{+}(x_0), \theta \nabla \varphi^{-}(x_0), x_0) \geq g(x_0) 
\qquad 
\text{(resp. } 
G(\nabla \varphi^{+}(x_0), \theta \nabla \varphi^{-}(x_0), x_0) \leq g(x_0) ).
\]
\end{itemize}
We say that $u \in C(\Omega)$ is a \emph{viscosity solution} to~\eqref{eq:nl} if it is both a viscosity subsolution and supersolution.
\end{defn}

\begin{rem}
In the definition above, we may replace admissible functions by \emph{piecewise quadratic functions}, i.e., continuous functions $P$ such that each restriction $P^{\pm}$ is a second order polynomial.
Indeed, suppose that an admissible function $\varphi$ touches $u$ from (say) above at $x_0 \in T$.
For $x \in \Omega^{\pm}$ close to $x_0$, we have that
\[
\begin{split}
\varphi(x) &= \varphi(x_0) + \nabla \varphi^{\pm}(x_0) \cdot (x-x_{0}) + \int_{0}^{1} \big[\nabla \varphi^{\pm}((1-t)x_0 + t x) - \nabla \varphi^{\pm}(x_0)\big] \cdot (x- x_0) dt\\
& \qquad \leq \varphi(x_0) + \nabla \varphi^{\pm}(x_0) \cdot (x-x_{0}) + C |x-x_0|^{2} =: P(x),
\end{split}
\]
where $C > 0$ depends on the Lipschitz seminorms of $\nabla \varphi^{\pm}$.
The piecewise quadratic function $P$ satisfies $\nabla P^{\pm}(x_0) = \nabla \varphi^{\pm}(x_0)$ and touches $u$ from above at $x_0$, whence the equivalence follows.
\end{rem}

\begin{rem}
In the subsolution (resp. supersolution) condition, instead of touching $u$ from above (resp. below), it is equivalent to impose that the inequality holds whenever $u - \varphi$ has a local maximum (resp. minimum) at $x_0 \in \Omega$.
\end{rem}

\medskip

The advantage of considering test functions with bounded second derivatives lies in the following technical tool, used throughout the paper:

\begin{lem}
\label{lem:trick}
Let $\vep > 0$.
In the subsolution (resp. supersolution) condition at the interface, we may assume that
the piecewise quadratic function $P$ touches $u$ strictly and additionally satisfies
\[
F^{\pm}(D^2 P, x_0) < - \vep^{-1} 
\qquad 
\text{(resp. } 
F^{\pm}(D^2 P, x_0) > \vep^{-1}
\text{)}.
\]
\end{lem}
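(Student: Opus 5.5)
Starting from a piecewise quadratic $P$ that touches $u$ from above at $x_0 \in T$ (by the preceding remark, admissible test functions may be replaced by piecewise quadratic ones), I will modify $P$ so that its gradients at $x_0$ stay the same. Then the interface inequality $G(\nabla P^{+}(x_0), \theta \nabla P^{-}(x_0), x_0) \geq g(x_0)$ is unchanged, while the new function touches strictly and has very negative bulk Hessian. The point is that only first derivatives of the test function at $x_0$ enter the interface condition, whereas the second derivatives can be made arbitrarily large. I argue the subsolution case; the supersolution case is symmetric with reversed signs.

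First, strictness. Replace $P$ by $P + |x-x_0|^2$. This is still piecewise quadratic, continuous across $T$, has the same gradients at $x_0$, and touches $u$ strictly from above. This step also shows that the condition with strict touching implies the general one, so the two conditions are equivalent.

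Second, the Hessian. I want to add a function that lies above zero near $x_0$ and vanishes at $x_0$ together with its gradients, yet has very negative Hessian in both phases. A pure quadratic cannot do this, since a nonnegative quadratic has a nonnegative Hessian. The remedy is to use a kink across the interface, which piecewise functions allow. Take
\[
\psi(x) = K |x_n| - M |x - x_0|^2,
\]
with $K, M > 0$. This is continuous, and on each side $\psi^{\pm} = \pm K x_n - M|x-x_0|^2$ is a quadratic polynomial. Its Hessian is $-2M\id$, so by uniform ellipticity
\[
F^{\pm}(D^2P + D^2\psi, x_0) \le F^{\pm}(D^2 P, x_0) - 2M\lambda < -\vep^{-1}
\]
for $M$ large.

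However, $\nabla \psi^{\pm}(x_0) = \pm K e_n$ is nonzero, so the gradients do change. The change pushes in the favourable direction: it adds $+Ke_n$ to $\nabla P^{+}$ and $-Ke_n$ to $\nabla P^{-}$, and by ellipticity $G$ increases in $\xi_n^{+}$ and decreases in $\xi_n^{-}$. So if we run the argument in reverse, $G$ evaluated at the modified gradients is at least $G$ at the original ones, which is the wrong direction for deducing the original inequality.

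The fix is to build $K$ into the construction and let it tend to zero. Near $x_0$ we have $\psi \ge 0$ on the region $K|x_n| \ge M|x-x_0|^2$, but not in the tangential directions, where $x_n = 0$. Hence I instead use
\[
\tilde P = P + |x-x_0|^2 + K|x_n| - M|x-x_0|^2
\]
only in a shrinking neighborhood. More precisely, I argue by contradiction: suppose $G(\nabla P^{+}, \theta\nabla P^{-}) < g(x_0)$. By continuity of $G$, there is $\delta > 0$ such that $G$ stays below $g(x_0)$ when $\nabla P^{\pm}$ is perturbed by $\pm K e_n$ with $K \le \delta$; the quantitative ellipticity bound $\Lambda K$ makes this explicit. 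The tangential obstruction is handled by perturbing $x_0$: subtract $M|x-x_0|^2$ and observe that $u - (P + K|x_n| - M|x-x_0|^2)$ attains a maximum at some $x_1$ near $x_0$. Choosing $M = \vep^{-1}$ scale and a small enough neighborhood, the maximum is attained in a small ball about $x_0$, because $P$ touches strictly with quadratic separation $|x-x_0|^2$ and $M$ is fixed before the radius.

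The main obstacle is to make sure the maximum point $x_1$ lies on $T$ rather than in the bulk. The kink term $K|x_n|$ forces this: at a bulk point the test function is $C^2$ with Hessian $\le D^2P + 2\id - 2M\id$. To be safe I would instead apply the bulk subsolution property at such an $x_1$, which gives $F^{\pm}(D^2P + (2-2M)\id) \ge f^{\pm}(x_1)$; this contradicts the bound when $M$ is large relative to $\sup|f^{\pm}|$ near $x_0$. Consequently $x_1 \in T$, and there the modified test function satisfies all the required properties: strict touching, $F^{\pm} < -\vep^{-1}$, and gradients within $O(K + M|x_1 - x_0|)$ of the original ones. Letting the radius and $K$ tend to zero then yields the contradiction with the assumed strict inequality for $G$.
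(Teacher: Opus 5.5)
Using a kink across $T$ is the right idea. The gap is in the step meant to remove the tangential obstruction. From $u\le P$ you get
\[
u-\tilde P\le (M-1)|x-x_0|^2-K|x_n| .
\]
For $M>1$ the right-hand side is positive at every $x\in T\setminus\{x_0\}$. The unit quadratic separation cannot compete with $M|x-x_0|^2$, and shrinking the radius does not help, since both terms scale like $\rho^2$.

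Concretely, if $u=P$ near $x_0$, then $u-\tilde P=(M-1)|x-x_0|^2-K|x_n|$. Its maximum over any $\overline B_\rho(x_0)$ is attained only on $\partial B_\rho(x_0)\cap T$. So there is no interior maximum point $x_1$. Your bulk/interface dichotomy and the gradient estimate $O(K+M|x_1-x_0|)$ then have nothing to apply to. The root cause is that subtracting $M|x-x_0|^2$ lowers the test function away from $x_0$, which is the opposite of what localizes a contact point.

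The missing observation is that uniform ellipticity needs only one large negative eigenvalue, in the normal direction:
\[
F^{\pm}(D^2P^{\pm}-C\,e_n\otimes e_n,x_0)\le F^{\pm}(D^2P^{\pm},x_0)-\lambda C .
\]
So keep the concave term purely normal, where the kink can absorb it, and make the tangential part slightly convex. This is the paper's construction: with $x_0=0$,
\[
\overline P=P+\delta|x_n|-\tfrac12 C x_n^2+\tfrac12\delta|x'|^2 .
\]
In the slab $|x_n|<\delta/C$ one has $\delta|x_n|-\frac12 Cx_n^2\ge\frac12\delta|x_n|$. Hence $\overline P$ touches $u$ strictly at $0$ itself, and the contact point never moves. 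Moreover,
\[
F^{\pm}(D^2\overline P^{\pm},0)\le F^{\pm}(D^2P^{\pm},0)-\lambda C+\Lambda\delta<-\varepsilon^{-1}
\]
once $C=C(\delta)$ is large. Only the normal derivatives change, by $\pm\delta$, and the upper ellipticity bound for $G$ gives
\[
G(\nabla\overline P^{+}(0),\theta\nabla\overline P^{-}(0),0)\le G(\nabla P^{+}(0),\theta\nabla P^{-}(0),0)+(1+\theta)\Lambda\delta .
\]
Letting $\delta\downarrow0$ recovers the condition for $P$. In particular, the sign of the gradient perturbation is not an obstruction, since it is $O(\delta)$, and no contradiction argument or perturbation of $x_0$ is needed.
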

\begin{proof}
We only check the subsolution property, the supersolution one being analogous.
Let $P$ be a piecewise quadratic function touching $u$ from above at $x_0 \in T$.
By translation, we may assume that $x_0 = 0$.

For each small $\delta > 0$, fix $C = C(\delta) > 0$ large to be chosen below.
Consider the modified piecewise quadratic function
\[
\overline{P}(x) 
:= P(x) + \delta |x_n| - \frac{1}{2} C x_n^2 + \frac{1}{2}\delta |x'|^2,
\]
and note that $\overline{P}$ touches $u$ strictly from above at $0$, in a smaller neighborhood.

A computation at $0$ shows that for all $i, j < n$, we have
\[
\overline{P}^{\pm}_{x_i}(0)= P^{\pm}_{x_i}(0), \qquad \overline{P}_{x_n}^{\pm}(0) = P^{\pm}_{x_n}(0) \pm \delta,
\]
\[
\overline{P}_{x_i x_j} = P_{x_i x_j} + \delta \cdot \delta_{ij}, \qquad \overline{P}^{\pm}_{x_i x_n} = P^{\pm}_{x_i x_n}, \qquad \overline{P}^{\pm}_{x_n x_n} = P^{\pm}_{x_n x_n} - C,
\]
whence by uniform ellipticity
\[
F^{\pm}(D^2 \overline{P}^{\pm}, 0) 
\leq F^{\pm}(D^2 P^{\pm}, 0) - \lambda C + \Lambda \delta 
< - \vep^{-1}
\]
by choosing $C > 0$ sufficiently large depending on $\vep$, $\delta$, and $F^{\pm}(D^2 P^{\pm}, 0)$.

If $\overline{P}$ satisfies the transmission condition at $0$, then by uniform ellipticity of $G$ we deduce
\[
g(0) \leq G(\nabla \overline{P}^{+}(0), \theta \nabla \overline{P}^{-}(0), 0) \leq G(\nabla P^{+}(0), \theta \nabla P^{-}(0), 0) + (1+\theta)\delta \Lambda,
\]
and letting $\delta \downarrow 0$ we recover the condition on $P$.
\end{proof}

\begin{rem}
An argument similar to the above shows that the transmission condition is equivalent to the alternative: 
either $G(\nabla u^{+}, \theta \nabla u^{-}, x) = g(x)$ or $F^{\pm}(D^2 u, x) = f^{\pm}(x)$ on $T$.
\end{rem}

\medskip

Our viscosity solutions satisfy the usual closedness property:

\begin{prop}[Closedness]
\label{prop:closed}
Let $\{F_k^{\pm}$, $G_k\}_{k\geq 1}$ be continuous operators on $\Omega^{\pm}\cup T$ and $T$, respectively, with ellipticity constants $0 < \lambda \leq \Lambda$ and $\mu \geq 0$.
Let $\{u_k\}_{k\geq1} \subset \uppers(\Omega)$ be viscosity subsolutions (resp. $\{u_k\}_{k\geq1} \subset \lowers(\Omega)$ be viscosity supersolutions) to 
\[
\begin{cases}
F_k^{\pm}(D^2 u_k, x) = f_k^{\pm}(x) & \text{ in } \Omega^{\pm},\\
G_k(\nabla u^{+}_k, \theta_k \nabla u^{-}_k, x) =  g_k^{\pm}(x) & \text{ on } T,
\end{cases}
\]
for some $\{f_k^{\pm}\} \subset C(\Omega^{\pm} \cup T)$, $\{g_k\} \subset C(T)$, and $\{\theta_k\} \subset [0,1]$.
Assume that $\theta_k \to \theta$ and that
\[
F_{k}^{\pm} \to F^{\pm}, \qquad G_k \to G, \qquad f_{k}^{\pm} \to f^{\pm}, \qquad \text{ and } \qquad g_k \to g
\]
uniformly on compacts in $\scal_n \times (\Omega^{\pm} \cup T)$, $\R^{n}\times \R^{n} \times T$, $\Omega^{\pm} \cup T$, and $T$, respectively.

Then the ``half-relaxed limit''
\[
\textstyle
\limsup^{\star} u_k(x) :=
\displaystyle
\lim_{k \to \infty} \sup \left\{ 
u_l(y) \colon l \geq k, \, y \in B_{1/k}(x)
\right\} 
\in \uppers(\Omega)
\]
\[
\text{(resp. }
\textstyle
\liminf_{\star} u_k(x)
\displaystyle
:= \lim_{k \to \infty} \inf \left\{ 
u_l(y) \colon l \geq k, \, y \in B_{1/k}(x)
\right\} 
\in \lowers(\Omega)
\text{)}
\]
is a viscosity subsolution (resp. supersolution) to~\eqref{eq:nl}.
\end{prop}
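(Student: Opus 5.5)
We follow the standard stability argument for viscosity solutions, treating only subsolutions; the supersolution case is symmetric. Set $\overline{u} := \limsup^{\star} u_k$. Upper semicontinuity of $\overline{u}$ is immediate from the definition. Let $P$ be a piecewise quadratic function touching $\overline{u}$ from above at $x_0 \in \Omega$. By Lemma~\ref{lem:trick}, we may assume the touching is strict: if $x_0 \in \Omega^{\pm}$, add $\delta|x-x_0|^2$ and let $\delta \downarrow 0$ at the end; if $x_0\in T$, use the modification from the lemma. Fix a small closed ball $\overline{B}_r(x_0) \subset \Omega$ on which $P > \overline{u}$ away from $x_0$.

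\textbf{Step 1: localization.} By the definition of the half-relaxed limit, there are indices $k_j \to \infty$ and points $y_j \to x_0$ with $u_{k_j}(y_j) \to \overline{u}(x_0)$. Let $x_j$ be a maximum point of the u.s.c. function $u_{k_j} - P$ on $\overline{B}_r(x_0)$. Strictness of the touching together with $\limsup u_{k_j}(x_j) \le \overline{u}(\lim x_j)$ gives the standard conclusions $x_j \to x_0$ and $u_{k_j}(x_j) \to \overline{u}(x_0)$. Then $P + c_j$, with $c_j = u_{k_j}(x_j)-P(x_j)$, touches $u_{k_j}$ from above at $x_j$. This is admissible, since piecewise quadratic functions are $C^2$ on each open phase and $C^{1,1}$ up to $T$.

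\textbf{Step 2: interior points.} If $x_0 \in \Omega^{\pm}$, then $x_j \in \Omega^{\pm}$ for large $j$. The subsolution property gives $F^{\pm}_{k_j}(D^2P^{\pm}, x_j) \ge f^{\pm}_{k_j}(x_j)$. Locally uniform convergence and continuity of the limits let us pass to $F^{\pm}(D^2 P^{\pm}, x_0)\ge f^{\pm}(x_0)$.

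\textbf{Step 3: interface points.} If $x_0 \in T$, the $x_j$ may lie in $\Omega^{+}$, in $\Omega^{-}$, or on $T$, and this is the main obstacle. A bulk inequality at nearby interior points says nothing about the transmission condition. This is why we apply Lemma~\ref{lem:trick} with a fixed $\vep$ (say $\vep=1$) to arrange, in addition, $F^{\pm}(D^2P^{\pm},x_0) < -\vep^{-1}$. Suppose infinitely many $x_j$ lie in $\Omega^{\pm}$. The argument of Step 2 would then yield $F^{\pm}(D^2P^{\pm},x_0)\ge f^{\pm}(x_0)$; here we use the continuity of $F^{\pm}$ and $f^{\pm}$ up to $T$ and the uniform convergence on compacts of $\scal_n\times(\Omega^{\pm}\cup T)$. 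Choosing $\vep$ with $\vep^{-1} > -f^{\pm}(x_0)$ for both signs makes this a contradiction. Hence $x_j \in T$ for all large $j$, and the subsolution condition gives
\[
G_{k_j}(\nabla P^{+}(x_j), \theta_{k_j}\nabla P^{-}(x_j), x_j) \ge g_{k_j}(x_j).
\]
The arguments $(\nabla P^{+}(x_j), \theta_{k_j}\nabla P^{-}(x_j), x_j)$ stay in a compact set and converge to $(\nabla P^{+}(x_0), \theta\nabla P^{-}(x_0), x_0)$. Uniform convergence of $G_k$ on compacts and continuity of $G$ (which is Lipschitz in the gradient variables by ellipticity) then give the limit inequality $G(\nabla P^{+}(x_0),\theta\nabla P^{-}(x_0),x_0)\ge g(x_0)$.

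Finally, we remove the modification from Lemma~\ref{lem:trick} by letting $\delta\downarrow 0$, exactly as in that lemma's proof. This recovers the transmission inequality for the original $P$ and completes the proof.
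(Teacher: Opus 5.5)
Your proposal is correct and follows essentially the same route as the paper. Both proofs use Lemma~\ref{lem:trick} to make the bulk inequality fail strictly at $x_0$, localize touching points of $u_k$ near $x_0$, rule out bulk touching points by passing to the limit via uniform convergence up to $T$, and then pass to the limit in $G_k$ along touching points on $T$. The only differences are minor. You localize through maximum points of $u_{k_j}-P$ using strict touching, whereas the paper perturbs $P$ by $2\vep|x-x_0|^2-\vep r^2$ and sends $r\downarrow 0$, then $\vep\downarrow 0$. Also, your parenthetical ``say $\vep=1$'' should be dropped in favor of choosing $\vep^{-1}>-f^{\pm}(x_0)$, as you do a line later.
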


\begin{rem}
The half-relaxed limits above are well-known in the viscosity literature and guarantee that the resulting limit function is semicontinuous; for instance, see Section 6 in~\cite{CIL}.
In particular, if a sequence $\{u_k\} \subset C(\Omega)$ of viscosity solutions to the above problems converges uniformly on compacts in $\Omega$, then the limit is a viscosity solution to~\eqref{eq:nl}.
\end{rem}

\begin{proof}
Let $P$ be a piecewise quadratic function touching $u$ from above at $x_0 \in \Omega$.
The case $x_0 \in \Omega^{\pm}$ is already known, hence we assume that $x_0 \in T$.
By Lemma~\ref{lem:trick}, we may assume that
\begin{equation}
\label{eq:strict}
F^{\pm}(D^2 P^{\pm}, x_0) < f^{\pm}(x_0).
\end{equation}

Let $\vep > 0$ and $r >0$ be small to be chosen below.
For $k$ large, we have that $u - u_k < \vep r^2$ in $B_r(x_0) \subset \Omega$, hence the piecewise quadratic function 
\[
P_{\vep, r} := P + 2 \vep |x-x_0|^2 - \vep r^2
\]
satisfies
\[
P_{\vep,r} \geq u_k \quad \text{ on } \partial B_r(x_0), \qquad \text{ and } \qquad P_{\vep, r}(x_0) < u_k(x_0).
\]
It follows that a vertical translation $P_{\vep,r} + c_k$, with $c_k > 0$, touches $u_k$ from above at some $x_{k,r}$ in $B_r(x_0)$.
There are three possibilities:
\begin{itemize}
\item either $x_{k,r} \in B_r^{+}(x_0)$ (resp. $x_{k,r} \in B_r^{-}(x_0)$), and hence
\[
F_{k}^{+} (D^2 P_{\vep, r}, x_{k,r}) \geq f_k^{+} (x_{k,r}) \qquad (\text{resp. } F_{k}^{-} (D^2 P_{\vep, r}, x_{k,r}) \geq f_k^{-} (x_{k,r}));
\]
\item or $x_{x,r} \in T_r(x_0)$, and hence
\[
G_k(\nabla P^{+}_{\vep,r}(x_{k,r}), \theta_k \nabla P^{-}_{\vep,r}(x_{k,r}), x_{k,r}) 
\geq g_k(x_{k,r}).
\]
\end{itemize}

Take a converging subsequence $x_{k_r,r} \to x_0$ with $r \downarrow 0$.
Notice that if for a further subsequence $x_{k_r,r} \in B_r^{+}(x_0)$ (resp. $x_{k_r,r} \in B_r^{-}(x_0)$), then taking the limit
\[
F^{+}(D^2 P^{+} + 4 \vep \id, x_0) \geq f^{+}(x_0) \qquad \text{ (resp. } F^{-}(D^2 P^{-} + 4 \vep \id, x_0) \geq f^{-}(x_0) ),
\]
and letting $\vep \downarrow 0$ contradicts~\eqref{eq:strict}.
Therefore, for a subsequence $x_{k_r,r} \in T_r(x_0)$ and taking the limit as $r \downarrow 0$ we obtain
\[
G(\nabla P^{+}(x_{0}), \theta \nabla P^{-}(x_{0}), x_{0}) \geq g(x_{0}).
\]

The proof of the supersolution property is analogous.
\end{proof}

\medskip

Instead of studying the fully nonlinear problem~\eqref{eq:nl} directly, it is more convenient to consider envelopes of linear problems.
The most celebrated ones are the Pucci extremal operators, defined on symmetric matrices $M \in \scal_n$ by
\[
\mcal^{+}(M; \lambda, \Lambda) = \sup_{A \in \ellmat}  \tr{A M} \qquad \text{ and } \qquad \mcal^{-}(M; \lambda, \Lambda) = \inf_{A \in \ellmat}  \tr{A M}.
\]
Thus, the operators $\mcal^{\pm}(D^2u; \lambda, \Lambda)$ can be interpreted as the envelopes of all linear uniformly elliptic operators in non-divergence form 
\[
\tr{A(x) D^2 u} = \sum_{i, j} a_{ij}(x) u_{x_i x_j},
\]
with coefficient matrices $A(x)$ in $\ellmat$.

\medskip
 
We can similarly define the envelopes of linear boundary conditions for the gradient.
Let $\ellvec$ be the set of \emph{oblique coefficients} given by
\[
\ellvec = 
\left\{
\gamma = (\gamma', \gamma_n) \in \R^{n-1} \times \R \colon \quad \lambda \leq \gamma_n \leq \Lambda, \quad |\gamma'| \leq \mu
\right\}.
\]
For vectors $\xi = (\xi', \xi_n) \in \R^{n-1}\times \R$, we define the upper operator $\ncal^{+}$ by
\begin{equation}
\label{nplus}
\ncal^{+}(\xi; \lambda,\Lambda,\mu)
 = \sup_{\gamma \in \ellvec} \gamma \cdot \xi
= \left( \Lambda \chara{\{\xi_n > 0\}} + \lambda \chara{\{\xi_n < 0\}} \right) \xi_n + \mu |\xi'|
\end{equation}
and the lower operator $\ncal^{-}$ by
\begin{equation}
\label{nminus}
\ncal^{-}(\xi; \lambda,\Lambda,\mu)
= \inf_{\gamma \in \ellvec} \gamma \cdot \xi
= \left( \lambda \chara{\{\xi_n > 0\}} + \Lambda \chara{\{\xi_n < 0\}} \right) \xi_n - \mu |\xi'|.
\end{equation}
Note here that $\ncal^{\pm}(\nabla u; \lambda, \Lambda, \mu)$ are the envelopes of all oblique derivative conditions 
\[
\gamma(x) \cdot \nabla u = \sum_{i}\gamma_i (x) u_{x_i},
\]
with coefficients $\gamma(x)$ in $\ellvec$.

\medskip

It is natural to extend this last notion to transmission conditions:

\begin{defn}[Extremal transmission operators]
For vectors 
$\xi^{\pm} = (\xi'^{\pm}, \xi_n^{\pm}) \in \R^{n-1}\times \R$, 
we define the operators $\tcal^{\pm}$ by
\[
\tcal^{+}(\xi^{+}, \xi^{-}) 
= \tcal^{+}(\xi^{+}, \xi^{-}; \lambda,\Lambda,\mu)
:= \sup_{\gamma^{\pm} \in \ellvec} \left[\gamma^{+} \cdot \xi^{+} - \gamma^{-} \cdot \xi^{-}\right],
\]
\[
\tcal^{-}(\xi^{+}, \xi^{-})  
= \tcal^{-}(\xi^{+}, \xi^{-}; \lambda,\Lambda,\mu)
:= \inf_{\gamma^{\pm} \in \ellvec} \left[\gamma^{+} \cdot \xi^{+} - \gamma^{-} \cdot \xi^{-}\right].
\]
\end{defn}

Note by definition that
\[
\tcal^{\pm}(\xi^{+}, \xi^{-}; \lambda,\Lambda,\mu) = \ncal^{\pm}(\xi^{+}; \lambda, \Lambda, \mu) - \ncal^{\mp}(\xi^{-}; \lambda, \Lambda, \mu),
\]
hence $\tcal^{\pm}$ have explicit expressions given in terms of~\eqref{nplus} and~\eqref{nminus}.
As above, we can view the operators $\tcal^{\pm}(\nabla u^{+}, \nabla u^{-}; \lambda, \Lambda, \mu)$ as envelopes of linear oblique derivative transmission conditions 
\[
\gamma^{+}(x) \nabla u^{+} - \gamma^{-}(x) \cdot \nabla u^{-}
= \sum_{i} \gamma_i^{+}(x) u_{x_i}^{+} 
- \sum_{j} \gamma_j^{-}(x) u_{x_j}^{-},
\]
with coefficients $\gamma^{\pm}(x)$ in $\ellvec$.

\medskip

We note the invariance property
\[
\tcal^{\pm}(- \xi^{+}, - \xi^{-}) = - \tcal^{\mp}(\xi^{+}, \xi^{-}).
\]

\medskip

In the special case when the vectors satisfy $\xi_n^{+} \geq 0$, $\xi_n^{-} \geq 0$, and $\xi'^{+} = \xi'^{-} = \xi'$, we have that
\begin{equation}
\label{tmin}
\tcal^{-}(\xi^{+}, \xi^{-}; \lambda, \Lambda, \mu) = \lambda \xi_n^{+} - \Lambda \xi_n^{-} - 2\mu |\xi'|,
\end{equation}
and considering the interpolation parameter $0 \leq \theta \leq 1$, we see that
\[
\tcal^{-}(\xi^{+}, \theta \xi^{-}) \geq \tcal^{-}(\xi^{+}, \xi^{-})
\]
in this case.
Universal bounds of this type will yield estimates that hold uniformly in $0 \leq \theta \leq 1$.

\medskip

\begin{rem}
If $A = (a_{ij}) \in \ellmat$, then the vector $\gamma = A e_n$ belongs to $\env_{\lambda, \Lambda,(\Lambda-\lambda)/2}$.\footnote{Note that since the set $\ellmat$ is compact and convex, the convex function $A \mapsto |\gamma'| = |A e_n - a_{nn} e_n|$ attains its maximum at the extremal points of $\ellmat$, which are of the form $\lambda P + \Lambda (\id - P)$, with $P \colon \R^n \to \R^n$ an orthogonal projection. It is then easy to find the maximum $\frac{1}{2}(\Lambda-\lambda)$.}
In particular, our extremal operators control the envelopes of all conormal derivatives and their associated transmission conditions.
These are the natural Neumann-type boundary conditions arising in classical variational problems, for which transmission problems are well understood.

The variational theory is based on the equivalence between the divergence-form equation
\begin{equation}
\label{eq:div}
\div (A(x) \nabla u) = 0 \qquad \text{ in } B_1,
\end{equation}
with (say) piecewise constant coefficients $A(x) = A^{+} \chara{B_1^{+}} + \theta A^{-} \chara{B_{1}^{-}}$, and the transmission problem
\begin{equation}
\label{eq:var}
\begin{cases}
\tr{A^{\pm}D^2 u} = 0 & \text{ in } B_1^{\pm},\\
A^{+} \nabla u^{+} \cdot e_n = \theta A^{-} \nabla u^{-} \cdot e_n & \text{ on } T_1.\\
\end{cases}
\end{equation}
An easy way to see this is by formally integrating~\eqref{eq:div} by parts.
More rigorously, $W^{1,2}$ weak solutions to~\eqref{eq:div} are continuous by De Giorgi's technique and hence, by a calibration argument, viscosity solutions to~\eqref{eq:var} (for instance, see our work~\cite{CEF}).
Our regularity theory in the companion paper~\cite{E} shows the converse: that viscosity solutions to~\eqref{eq:var} are also weak solutions to~\eqref{eq:div}.
\end{rem}

\medskip

The extremal operators above lead to universal classes of viscosity subsolutions and supersolutions to transmission problems. 
We introduce the following:

\begin{defn}
We say that $u \in \underline{S}(\lambda,\Lambda,\mu,\theta; f^{+}, f^{-}, g)$\emph{ in }$\Omega$, if $u \in \uppers(\Omega)$ is a viscosity subsolution to the transmission problem
\[
\begin{cases}
\mcal^{+}(D^2 u; \frac{\lambda}{n}, \Lambda) \geq f^{\pm}(x) & \text{ in } \Omega^{\pm},\\
\tcal^{+}(\nabla u^{+}, \theta \nabla u^{-}; \lambda,\Lambda,\mu) \geq g(x) & \text{ on } T.
\end{cases}
\]
Similarly, we write $u \in \overline{S}(\lambda,\Lambda,\mu,\theta; f^{+}, f^{-}, g)$\emph{ in }$\Omega$, if $u \in \lowers(\Omega)$ is a viscosity supersolution to
\[
\begin{cases}
\mcal^{-}(D^2 u; \frac{\lambda}{n}, \Lambda) \leq f^{\pm}(x) & \text{ in } \Omega^{\pm},\\
\tcal^{-}(\nabla u^{+}, \theta \nabla u^{-}; \lambda,\Lambda,\mu) \leq g(x) & \text{ on } T.
\end{cases}
\]
We also define the solution class
\[
S(\lambda,\Lambda,\mu,\theta; f^{+}, f^{-}, g) = \underline{S}(\lambda,\Lambda,\mu,\theta; f^{+}, f^{-}, g) \cap \overline{S}(\lambda,\Lambda,\mu,\theta; f^{+}, f^{-}, g).
\]
\end{defn}

\begin{rem}
By the invariance of extremal operators, the classes $\overline{S}$ and $\underline{S}$ can be recovered from each other by a sign change.
For instance, if $u \in \overline{S}(\lambda, \Lambda, \mu, \theta; f^{+}, f^{-}, g)$ in $\Omega$, then $-u \in \underline{S}(\lambda, \Lambda, \mu, \theta; -f^{+}, -f^{-}, -g)$ in $\Omega$.
\end{rem}

\begin{rem}
Our solutions classes are natural extensions of the classical solution classes for Pucci operators.
In particular, we would like to point out that if $u \in \overline{S}(\lambda, \Lambda, \mu, \theta; f^{+}, f^{-}, g)$ in $\Omega$, 
then, using the notation from Section~2 in \cite{CC}, we have $u^{\pm} \in \overline{S}(\frac{\lambda}{n}, \Lambda; f^{\pm})$ in $\Omega^{\pm}$.
\end{rem}

\medskip

Solution classes behave well under the addition of admissible barriers.

\begin{lem}
\label{lem:aux}
Let $u \in \underline{S}(\lambda,\Lambda,\mu,\theta; f^{+}, f^{-}, g)$ in $\Omega$, and let $\varphi$ be an admissible function satisfying
\[
\begin{cases}
\mcal^{+}(D^2 \varphi; \frac{\lambda}{n}, \Lambda) \leq \widetilde{f}^{\pm}(x) & \text{ in } \Omega^{\pm},\\
\tcal^{+}(\nabla \varphi^{+}, \theta \nabla \varphi^{-}; \lambda, \Lambda, \mu) \leq \widetilde{g}(x) & \text{ on } T.
\end{cases}
\]
Then
\[
u - \varphi \in \underline{S}(\lambda,\Lambda,\mu,\theta; f^{+}- \widetilde{f}^{+}, f^{-}- \widetilde{f}^{-}, g - \widetilde{g}) \quad \text{ in } \Omega.
\]
\end{lem}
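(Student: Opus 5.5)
The plan is to verify the subsolution property directly from the definition, testing $u-\varphi$ with a piecewise quadratic function and using the sub-additivity of the extremal operators. Note first that $u - \varphi$ is upper semicontinuous, since $\varphi$ is continuous. Let $P$ be a piecewise quadratic function touching $u - \varphi$ from above at $x_0 \in \Omega$. Then $\psi := P + \varphi$ touches $u$ from above at $x_0$. Moreover, $\psi$ is admissible: each restriction $\psi^{\pm} = P^{\pm} + \varphi^{\pm}$ is $C^2$ in $\Omega^{\pm}$ and $C^{1,1}$ up to $T$, and $\psi$ is continuous. Since $u \in \underline{S}$, we may apply the subsolution condition to $\psi$.

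For an interior point $x_0 \in \Omega^{\pm}$, we obtain $\mcal^{+}(D^2 P + D^2\varphi(x_0); \frac{\lambda}{n},\Lambda) \geq f^{\pm}(x_0)$. Because $\mcal^{+}$ is a supremum of linear functionals, it is subadditive, so
\[
\mcal^{+}(D^2P + D^2\varphi) \leq \mcal^{+}(D^2 P) + \mcal^{+}(D^2\varphi).
\]
Combining this with the hypothesis $\mcal^{+}(D^2\varphi(x_0)) \leq \widetilde f^{\pm}(x_0)$ gives $\mcal^{+}(D^2P) \geq f^{\pm}(x_0) - \widetilde f^{\pm}(x_0)$, which is the required inequality.

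For an interface point $x_0 \in T$, we obtain
\[
\tcal^{+}(\nabla P^{+} + \nabla\varphi^{+}, \theta\nabla P^{-} + \theta\nabla\varphi^{-}) \geq g(x_0).
\]
The operator $\tcal^{+}$ is a supremum over $\gamma^{\pm}\in\ellvec$ of the linear maps $(\xi^{+},\xi^{-}) \mapsto \gamma^{+}\cdot\xi^{+} - \gamma^{-}\cdot\xi^{-}$, so it is also subadditive in the pair $(\xi^{+},\xi^{-})$. Hence the left-hand side is at most
\[
\tcal^{+}(\nabla P^{+}, \theta\nabla P^{-}) + \tcal^{+}(\nabla\varphi^{+}(x_0), \theta\nabla\varphi^{-}(x_0)) \leq \tcal^{+}(\nabla P^{+}, \theta\nabla P^{-}) + \widetilde g(x_0),
\]
which yields $\tcal^{+}(\nabla P^{+}, \theta \nabla P^{-}) \geq g(x_0) - \widetilde g(x_0)$.

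There is no real obstacle. The only point requiring care is that the hypothesis on $\varphi$ is pointwise classical, which makes sense because $\varphi^{\pm}$ is $C^2$ in the interior and $C^{1}$ up to $T$. We also need the sum of a piecewise quadratic and an admissible function to be admissible, which holds since both are continuous across $T$ with matching tangential gradients. If one prefers general admissible test functions rather than piecewise quadratic ones, the same argument applies verbatim.
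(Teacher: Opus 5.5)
Your proof is correct and follows essentially the same route as the paper's: test $u-\varphi$ with $P$, observe that $P+\varphi$ is an admissible function touching $u$ from above, and conclude using the subadditivity of $\tcal^{+}$. The only difference is that you also write out the bulk case using the subadditivity of $\mcal^{+}$, which the paper simply calls known.
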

\begin{proof}
Let $P$ be a piecewise quadratic function touching $u- \varphi$ from above at $x_0 \in \Omega$.
The case $x_0 \in \Omega^{\pm}$ is known, hence we assume $x_0 \in T$.
Since the piecewise $C^{1,1}$ function $P + \varphi$ touches $u$ from above at $x_0 \in T$, by definition of viscosity subsolution, it follows that
\[
\tcal^{+}( \nabla (P + \varphi)^{+}(x_0), \theta \nabla (P + \varphi)^{-}(x_0)) \geq g(x_0).
\]
On the other hand, by definition of $\tcal^{+}$ and the assumption on $\varphi$, on $T$ we have that
\[
\tcal^{+}(\nabla (P + \varphi)^{+}, \theta \nabla (P + \varphi)^{-}) 
\leq \tcal^{+}(\nabla P^{+}, \theta \nabla P^{-}) + \tcal^{+}(\nabla \varphi^{+},\theta \nabla \varphi^{-}) 
\leq \tcal^{+}(\nabla P^{+},\theta \nabla P^{-}) + \widetilde{g},
\]
which combined with the above yields
\[
\tcal^{+}(\nabla P^{+}(x_0), \theta \nabla P^{-}(x_0)) \geq g (x_0) - \widetilde{g}(x_0).
\]
\end{proof}

\medskip

The next property reduces the analysis of general fully nonlinear problems to the particular case of extremal operators:

\begin{prop}
Let $u \in \uppers(\Omega)$ be a viscosity subsolution to~\eqref{eq:nl}.
Then
\[
u \in \underline{S}\left(
\lambda,\Lambda,\mu,\theta; 
f^{+} - F^{+}(0,\cdot), f^{-} - F^{-}(0,\cdot), g - G(0,0, \cdot)
\right) \quad \text{ in } \Omega.
\]
\end{prop}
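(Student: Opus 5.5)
The argument is a pointwise comparison applied to test functions. Let $P$ be a piecewise quadratic function touching $u$ from above at $x_0 \in \Omega$; by the earlier remark, piecewise quadratics suffice as test functions. We verify the two inequalities defining $\underline{S}$ separately for bulk points and interface points, each time using only the structural (ellipticity) bounds on $F^{\pm}$ and $G$ evaluated against the zero matrix and the zero vectors.

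\emph{Bulk points.} If $x_0 \in \Omega^{\pm}$, the subsolution property gives $F^{\pm}(D^2P(x_0),x_0) \geq f^{\pm}(x_0)$. Decompose $M := D^2 P(x_0) = M_+ - M_-$ into positive and negative parts. Applying the ellipticity of $F^{\pm}$ twice (adding $M_+$ to $-M_-$, and adding $M_-$ to $-M_-$) yields
\[
F^{\pm}(M,x_0) - F^{\pm}(0,x_0) \leq \Lambda\|M_+\| - \lambda\|M_-\|.
\]
The right-hand side is bounded by $\mcal^{+}(M;\frac{\lambda}{n},\Lambda)$, since $\Lambda \|M_+\| \leq \Lambda\tr{M_+}$ and $\lambda\|M_-\| \geq \frac{\lambda}{n}\tr{M_-}$. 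This is precisely why the constant $\lambda/n$ appears in the definition of the class. Hence
\[
\mcal^{+}(D^2P(x_0);\tfrac{\lambda}{n},\Lambda) \geq f^{\pm}(x_0) - F^{\pm}(0,x_0).
\]

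\emph{Interface points.} If $x_0 \in T$, set $\xi^{+} = \nabla P^{+}(x_0)$ and $\xi^{-} = \theta \nabla P^{-}(x_0)$. The subsolution condition gives $G(\xi^{+},\xi^{-},x_0) \geq g(x_0)$. We bound $G(\xi^{+},\xi^{-},x_0) - G(0,0,x_0)$ by $\tcal^{+}(\xi^{+},\xi^{-})$. The ellipticity of $G$ requires the increments to have nonnegative normal components, so we split $\xi^{\pm} = \eta^{\pm} - \zeta^{\pm}$ with
\[
\eta^{\pm} = ((\xi'^{\pm}),(\xi^{\pm}_n)_+), \qquad \zeta^{\pm} = (0,(\xi^{\pm}_n)_-).
\]
We then pass through the intermediate point $(-\zeta^{+},-\zeta^{-})$. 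The step from $(-\zeta^{+},-\zeta^{-})$ to $(0,0)$ adds $(\zeta^{+},\zeta^{-})$, so the lower bound in the ellipticity of $G$ gives
\[
G(-\zeta^{+},-\zeta^{-}) - G(0,0) \leq -\lambda(\xi^{+}_n)_- + \Lambda(\xi^{-}_n)_-.
\]
The step from $(-\zeta^{+},-\zeta^{-})$ to $(\xi^{+},\xi^{-})$ adds $(\eta^{+},\eta^{-})$, so the upper bound gives
\[
G(\xi^{+},\xi^{-}) - G(-\zeta^{+},-\zeta^{-}) \leq \Lambda(\xi^{+}_n)_+ - \lambda(\xi^{-}_n)_+ + \mu(|\xi'^{+}|+|\xi'^{-}|).
\]
Summing the two bounds gives exactly $\ncal^{+}(\xi^{+}) - \ncal^{-}(\xi^{-}) = \tcal^{+}(\xi^{+},\xi^{-})$, by the explicit formulas \eqref{nplus}--\eqref{nminus}. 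Consequently
\[
\tcal^{+}(\nabla P^{+}(x_0),\theta\nabla P^{-}(x_0)) \geq g(x_0) - G(0,0,x_0).
\]

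\emph{Conclusion and main obstacle.} Combining the two cases shows $u \in \underline{S}$ with the stated data. The only delicate step is the bookkeeping in the interface bound: the ellipticity hypothesis on $G$ applies only to increments with $\eta_n^{\pm}\geq 0$, which forces the route through $(-\zeta^{+},-\zeta^{-})$ and the sign splitting of the normal components. Once that path is chosen, the constants match those of $\tcal^{+}$ exactly.
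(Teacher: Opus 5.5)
Your proof is correct and follows the same route as the paper. You test with a piecewise quadratic $P$, invoke the subsolution inequality, and bound $G(\nabla P^{+},\theta\nabla P^{-},x_0)-G(0,0,x_0)$ by $\tcal^{+}$ through uniform ellipticity, with the bulk analogue giving $\mcal^{+}(\cdot;\frac{\lambda}{n},\Lambda)$. The paper only asserts these ellipticity bounds (and calls the bulk case classical), whereas you verify them explicitly by splitting the normal components by sign, which is a correct and useful elaboration.
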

\begin{proof}
Let $P$ be a piecewise quadratic function touching $u$ from above at $x_0 \in \Omega$.
The case $x_0 \in \Omega^{\pm}$ is classical, hence we assume that $x_0 \in T$.
Since $u$ is a subsolution, we have that
\[
G(\nabla P^{+}(x_0), \theta \nabla P^{-}(x_0), x_0) \geq g (x_0)
\]
and by uniform ellipticity
\[
G(\nabla P^{+}(x_0), \theta \nabla P^{-}(x_0), x_0) \leq G(0, 0, x_0) + \tcal^{+}(\nabla P^{+}(x_0), \theta \nabla P^{-}(x_0); \lambda, \Lambda, \mu),
\]
whence the claim follows.
\end{proof}

\medskip

We conclude this section by giving a simple characterization of the degenerate problem.
Namely, when $\theta = 0$, our transmission problem decouples into an oblique derivative problem above and a Dirichlet problem below, with boundary data on the interface.
More precisely, we have:

\medskip

\begin{prop}
\label{prop:deg}
The following are equivalent:
\begin{enumerate}[label=\rm(\roman*)]
\item The function $u \in C(\Omega)$ is a viscosity solution to the degenerate transmission problem
\[
\begin{cases}
F^{\pm}(D^2 u, x) = f^{\pm}(x) & \text{ in } \Omega^{\pm},\\
G(\nabla u^{+}, 0, x) = g(x) & \text{ on } T.
\end{cases}
\]
\item The functions $u^{+} \in C(\Omega^{+}\cup T)$ and $u^{-} \in C(\Omega^{-}\cup T)$ are viscosity solutions to the boundary-value problems
\[
\begin{cases}
F^{+}(D^2 u^{+}, x) = f^{+}(x) & \text{ in } \Omega^{+},\\
G(\nabla u^{+},0, x) = g(x) & \text{ on } T,\\
\end{cases}
\quad \text{ and } \quad 
\begin{cases}
F^{-}(D^2 u^{-}, x) = f^{-}(x) & \text{ in } \Omega^{-},\\
u^{-} = u^{+} & \text{ on } T,\\
\end{cases}
\]
respectively.
\end{enumerate}
\end{prop}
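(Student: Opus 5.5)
The plan is to verify both implications directly from the definitions, handling interior points by the classical theory and concentrating on touching points $x_0 \in T$. Throughout, I use that for $\theta = 0$ the quantity $G(\nabla \varphi^{+}(x_0), 0, x_0)$ depends only on the upper restriction $\varphi^{+}$. I will also reduce to piecewise quadratic test functions, as allowed by the remark following the definition of viscosity solution. For the half-space problems in (ii), the viscosity notion is the standard one: at $x_0 \in T$, a $C^2$ (equivalently quadratic) function touching $u^{+}$ from above relative to $\Omega^{+} \cup T$ must satisfy either $G(\nabla\psi(x_0),0,x_0) \geq g(x_0)$ or, in the generalized sense, the bulk inequality. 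I will show that the stronger pure transmission inequality holds. The condition $u^{-} = u^{+}$ on $T$ is simply the continuity of $u$, so it is automatic in both directions. Interior points of $\Omega^{\pm}$ are identical in (i) and (ii).

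\emph{(ii) $\Rightarrow$ (i).} Let $P$ be piecewise quadratic and touch $u$ from above at $x_0 \in T$. Then $P^{+}$ extends to a quadratic polynomial that touches $u^{+}$ from above at $x_0$ relative to $\Omega^{+}\cup T$. By (ii) this gives $G(\nabla P^{+}(x_0), 0, x_0) \geq g(x_0)$, which is exactly the transmission condition in (i) with $\theta = 0$. The supersolution case is symmetric.

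\emph{(i) $\Rightarrow$ (ii).} This is the main step. The difficulty is that a quadratic $\psi$ touching $u^{+}$ from above on $\Omega^{+}\cup T$ need not lie above $u^{-}$ on the lower side. Since $u^{-}$ is merely continuous, no Lipschitz control is available to fix this directly. I plan to use a penalization argument in the spirit of Lemma~\ref{lem:trick} and Proposition~\ref{prop:closed}.

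First, fix $\delta>0$. Following Lemma~\ref{lem:trick}, replace $\psi$ by
\[
\psi + \delta |x_n| - \tfrac{C}{2} x_n^2 + \tfrac{\delta}{2}|x'-x_0'|^2
\]
on each side, with $\psi$ itself used on both sides before the modification. The resulting function $\overline{P}$ touches $u^{+}$ strictly from above on $\Omega^{+}\cup T$ near $x_0$, and it satisfies
\[
F^{\pm}(D^2\overline{P}^{\pm},x) < f^{\pm}(x)
\]
in a small ball $B_r(x_0)$, using the continuity of $F^{\pm}$ and $f^{\pm}$. This modification changes the upper gradient only by $\delta$.

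Next, for $K>0$ set $\varphi_K := \overline{P}$ on $\overline{\Omega^{+}}$ and $\varphi_K := \overline{P} + K|x_n|$ on $\Omega^{-}$. This is piecewise quadratic, and its lower Hessian is unchanged, so the strict bulk inequality still holds on the lower side. Let $x_K$ be a maximum point of $u - \varphi_K$ over $\overline{B_r(x_0)}$. Because $\overline{P}$ touches $u$ strictly on the upper side and $K|x_n|$ penalizes the lower side, the uniform continuity of $u$ together with strictness should force $x_K \to x_0$ as $K\to\infty$. In particular, $x_K$ will be interior to $B_r(x_0)$ for $K$ large.

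At $x_K$, a vertical translate of $\varphi_K$ touches $u$ from above. If $x_K \in \Omega^{\pm}$, the definition in (i) gives $F^{\pm}(D^2\overline{P}^{\pm},x_K) \geq f^{\pm}(x_K)$, which contradicts the strict inequality. Hence $x_K \in T$, and (i) yields
\[
G(\nabla \overline{P}^{+}(x_K), 0, x_K) \geq g(x_K).
\]
The lower gradient, which involves $K$, does not enter here because $\theta = 0$. Letting $K\to\infty$, using the continuity of $G$ and $g$, and then letting $\delta \downarrow 0$ with the ellipticity of $G$ as in Lemma~\ref{lem:trick}, gives $G(\nabla\psi(x_0),0,x_0) \geq g(x_0)$.

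The supersolution case is analogous, with $-K|x_n|$ on the lower side and the reversed bulk inequalities. The step I expect to require the most care is the localization $x_K \to x_0$. To check it, I would use the fact that $u - \overline{P} \leq -c(\rho) < 0$ on $(\overline{\Omega^{+}} \cap \overline{B_r}) \setminus B_\rho(x_0)$. By uniform continuity, this extends to a thin strip $\{-\sigma < x_n \leq 0\}$ away from $x_0$. Outside that strip, the term $K\sigma$ dominates once $K$ is large.
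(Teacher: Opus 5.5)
Your proof is correct and is essentially the paper's argument. In both, the one-sided test function is extended below $T$ by a piece with a steep slope in the $-x_n$ direction and enough concavity that contact in $\Omega^{-}$ is excluded by the strict bulk inequality; continuity of $u$ on a thin strip below $T$ keeps the contact point near $x_0$; and at $\theta=0$ the transmission condition ignores the lower gradient. The paper builds one explicit barrier on the thin cylinder $Q_{r,\rho}(x_0)$ (slope $\delta r^2/\rho$, concavity $\delta r^2/\rho^2$) and checks it on $\partial Q_{r,\rho}$, whereas you take the concavity from Lemma~\ref{lem:trick}, add the separate penalty $K|x_n|$, and localize maximum points as $K\to\infty$.

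One minor fix: if you read (ii) in the generalized either/or sense, then (ii)$\Rightarrow$(i) needs one more line. First replace $P$ by its Lemma~\ref{lem:trick} modification, which excludes the bulk alternative $F^{+}(D^2P^{+},x_0)\ge f^{+}(x_0)$, and then let $\delta\downarrow 0$.
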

\begin{rem}
\label{rem:intuition}
The boundary regularity for the problems in (ii) is well-known: for smooth coefficients and sources, the functions $u^{\pm}$ are $C^{1,\alpha}$ up to the interface.
This suggests that if solutions to pure transmission problems ($\theta = 1$) were piecewise $C^{1,\alpha}$, then one should be able to interpolate between both regularity theories and obtain estimates that hold uniformly in $0 \leq \theta \leq 1$. 
We make this intuition rigorous in our companion paper~\cite{E}, where we obtain piecewise $C^{1,\alpha}$ estimates independent of $\theta$.
\end{rem}

\begin{proof}
That (ii) implies (i), with $u = u^{+}\chara{\Omega^{+}\cup T} + u^{-}\chara{\Omega^{-}} \in C(\Omega)$, is clear.

\medskip

Assume (i) and let us prove that (ii) holds with $u^{+} = u \chara{\Omega^{+}\cup T}$ and $u^{-} = u \chara{\Omega^{-}\cup T}$.
It suffices to show that $u^{+}$ satisfies the oblique derivative condition in the viscosity sense.
We only check the subsolution condition, the supersolution  one being analogous.

\medskip

Let $P$ be a quadratic polynomial touching $u^{+}$ from above at $x_0 \in T$.
For $r > 0$ and $\rho >0$ small, we consider the cylinder $Q_{r,\rho}(x_0) \subset \Omega$ given by
\[
Q_{r, \rho}(x_0) := 
T_r(x_0) \times (-\rho, \rho) = 
\left\{
x = (x', x_n) \in \R^{n-1} \times \R \colon \quad |x'- x'_0| < r, \quad |x_n| < \rho
\right\},
\]
where by a slight abuse of notation we identify $T_r(x_0) = B'_r(x_0') \subset \R^{n-1}$.

\medskip

For $\delta >0$, let $P_{\delta}(x) := P(x) + \frac{1}{2} \delta |x|^2$ and note that $P_{\delta}$ touches $u^{+}$ strictly.
Since $P \geq u$ in a neighborhood of $x_0$ in $\Omega^{+}\cup T$, restricting to a sphere in the interface, we see that
\begin{equation}
\label{abo1}
P_{\delta} \geq u + \frac{1}{2} \delta r^2 \quad \text{ on } \partial T_r(x_0).
\end{equation}
Since $u$ is continuous in $\Omega$, for a small $0 < \rho = \rho(r, \delta) < \delta$ we have that
\begin{equation}
\label{abo2}
|u(x',x_n) - u(x',0)| \leq \frac{1}{4} \delta r^2 \qquad \text{ for } (x', x_n) \in \overline{Q_{r,\rho}}(x_0).
\end{equation}
We extend $P_{\delta}$ to a piecewise quadratic function on the other side by letting
\[
P_{\delta}^{-}(x', x_n) := P_{\delta}(x',0) - \frac{1}{2} \delta r^2 \left[2 \frac{x_n}{\rho} + \frac{x_n^2}{\rho^2}\right] \qquad \text{ for } (x', x_n) \in \overline{Q_{r, \rho}^{-}}(x_0).
\]
Note that
\begin{equation}
\label{abo3}
P^{-}_{\delta}(x',x_n) \geq P_{\delta}(x',0) \quad \text{ for } (x', x_n) \in \overline{Q_{r, \rho}^{-}}(x_0),
\end{equation}
as well as
\begin{equation}
\label{abo4}
P^{-}_{\delta}(x',- \rho) = P_{\delta}(x',0) + \frac{1}{2} \delta r^2 \quad \text{ for } x' \in \partial T_r(x_0).
\end{equation}

We claim that 
\begin{equation}
\label{abo0}
P_{\delta} > u \qquad \text{ on } \partial Q_{r,\rho}.
\end{equation}
Indeed, we check the inequality on the following partition of the boundary:
\begin{itemize}
\item 
On $\partial T_{r}(x_0) \times [-\rho, 0]$, by~\eqref{abo3},~\eqref{abo1}, and~\eqref{abo2}
\[
P_{\delta} - u \geq (P_{\delta} - u)|_{\partial T_{r}(x_0)} - \frac{1}{4} \delta r^2 \geq \frac{1}{4} \delta r^2;
\]
\item On $T_r (x_0) \times \{x_n = - \rho r\}$, by~\eqref{abo4},~\eqref{abo2}, and recalling that $P_{\delta} \geq u$ on $T_{r}(x_0)$
\[
P_{\delta} - u \geq (P_{\delta} - u)|_{T_{r}(x_0)} + \frac{1}{4} \delta r^2 \geq \frac{1}{4} \delta r^2;
\]
\item On $\partial Q_{r,\rho}(x_0) \cap \{x_n > 0\}$, since $P \geq u$ on $\overline{Q_{r,\rho}^{+}}$
\[
P_{\delta} - u \geq \frac{1}{2}\delta r^2.
\]
\end{itemize}

\medskip

A computation shows that
\[
D^2 P_{\delta}^{-} = D^2 [P_{\delta}|_{T}] - \frac{\delta r^2}{\rho^2} e_n \otimes e_n.
\]
Choosing $\rho > 0$ smaller, we may assume that
\[
\mcal^{+}(D^2 P_{\delta}^{-}; \textstyle \frac{\lambda}{n}, \Lambda)
\leq \displaystyle- \frac{\lambda}{n}\frac{\delta r^2}{\rho^2} + \Lambda(n-1) \|D^2 [P_{\delta}|_{T}]\|
\leq - \|f^{-} - F^{-}(0,\cdot)\|_{L^{\infty}(Q_{r,\rho}^{-})},
\]
and hence by uniform ellipticity
\begin{equation}
\label{abo5}
F^{-}(D^2 P_{\delta}^{-}, x) \leq F^{-}(0, x) + \mcal^{+}(D^2 P_{\delta}^{-}; \textstyle \frac{\lambda}{n}, \Lambda) < f^{-}(x) \qquad \text{ for } x \in Q_{r,\rho}^{-}.
\end{equation}

By~\eqref{abo0}, a vertical translation $P_{\delta} + c$, with $c > 0$, touches $u$ from above at some $x_r \in Q_{r,\rho}(x_0)$.
Moreover $x_r \in T_r(x_0)$ since otherwise the bulk equation would contradict~\eqref{abo5}.
The transmission condition at $x_r$ now gives
\[
g(x_r) \leq G(\nabla P_{\delta}^{+}(x_r), 0, x_r) \leq G(\nabla P^{+}(x_r), 0, x_r) + \delta \mu r,
\]
where we have used that $\nabla P_{\delta}^{+}(x',0) = \nabla P_{\delta}(x',0) + \delta (x',0)$ combined with uniform ellipticity.
Letting $r \downarrow 0$ yields the claim.
\end{proof}


\medskip

\section{The ABP maximum principle}
\label{sec:abp}

The classical ABP maximum principle provides a quantitative lower bound for viscosity supersolutions to second order PDEs.
In this section, we prove a variant of this result for oblique transmission problems.
For this, we touch the supersolution from below by appropriate piecewise linear functions.
Choosing the coefficients on each side to be comparable in size, the usual ABP technique will allow us to control the original function by the volume of its gradient image.

\medskip

In the next statement, recall our subindex notation for the positive and negative parts
\[
u_{+} = \max\{u, 0\}
\qquad \text{ and }
\qquad 
u_{-} = \max\{- u, 0\},
\]
and compare it with the superindex notation for the restrictions
\[
u^{+} = u \chara{\{x_n \geq 0\}} \qquad \text{ and } \qquad u^{-} = u \chara{\{x_n \leq 0\}}.
\]

\medskip

\begin{thm}[ABP estimate]
\label{thm:abp}
Let $u \in \overline{S}(\lambda,\Lambda,\mu, \theta; f^{+}, f^{-}, g)$ in $B_R$, with $u \in \lowers(\overline{B_R})$.

There is a universal constant $C > 0$ such that
\[
\sup_{B_R} u_{-} \leq \sup_{\partial B_R} u_{-} + C R \left( 
 \|f^{+}_{+}\|_{L^{n}(\{u = \Gamma_u\} \cap B_R^{+})} + \|f^{-}_{+}\|_{L^{n}(\{u = \Gamma_u\} \cap B_R^{-})}+ \|g_{+}\|_{L^{\infty}(T_R)}
 \right),
\]
where $\Gamma_u$ denotes the convex envelope of $- u_{-} \chara{B_R}$ in the ball $B_{2R}$.
\end{thm}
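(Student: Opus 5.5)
We plan to reduce the estimate to the classical ABP argument in each phase, using piecewise linear functions to handle the interface. After scaling we take $R=1$, and after subtracting $\sup_{\partial B_1} u_-$ (constants lie in $\overline{S}$) we may assume $u \geq 0$ on $\partial B_1$. Let $M = \sup_{B_1} u_-$ and suppose $M>0$. Let $\Gamma_u$ be the convex envelope in $B_2$ described in the statement; it is convex, vanishes on $\partial B_2$, and has minimum $-M$ inside $B_1$.

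\textbf{Step 1: supporting hyperplanes of $\Gamma_u$.} The classical ABP argument says that for every $p\in\R^n$ with $|p|< M/3$ there is a supporting hyperplane $\ell(x)=p\cdot x + c$ of $\Gamma_u$. It touches $-u_-$ from below at some point $x_0\in\{u=\Gamma_u\}\cap \overline{B_1}$, and in fact $x_0\in B_1$ since $u\ge 0$ on $\partial B_1$. The transmission condition rules out contact on $T_1$ for most slopes. Suppose $\ell$ touches $u$ from below at $x_0 \in T_1$. Then $\ell$ is an admissible test function with $\nabla\ell^{\pm}=p$, so
\[
\tcal^{-}(p,\theta p)\leq g(x_0).
\]
For $\xi=p$ we have $\tcal^-(p,\theta p)\ge \lambda p_n^+ - \Lambda\theta (p_n)_+ \dots$, and this gives no sign control when $\theta=1$. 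So a single linear function will not suffice.

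\textbf{Step 2: replace planes by piecewise linear functions.} Following the strategy announced in the section introduction, we touch $u$ from below by piecewise linear functions
\[
L_{q}(x)= q\cdot x + c + K\,(q_n)\,\text{-dependent kinks}.
\]
Concretely, we take $L(x)=q'\cdot x' + a x_n^{+} - b x_n^{-} + c$ with slopes $a$ in the $+$ phase and $b$ in the $-$ phase, chosen so that $\tcal^{-}(\nabla L^+,\theta\nabla L^-)$ is large whenever $|q|$ is large. We choose $a-b=\kappa\,|q|$, where $\kappa$ is a large universal constant, so that the two slopes are comparable in size. Then by \eqref{tmin}-type bounds we get $\lambda a - \Lambda\theta b - 2\mu|q'|\geq \lambda(a-b)-(\Lambda-\lambda)b - 2\mu|q'|$, which is not quite enough. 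To fix this we take the kink convex with a fixed ratio, $a = \Lambda b/\lambda + \kappa|q|$ when $b\ge0$ (and symmetrically otherwise). Then $\tcal^- \ge \lambda\kappa|q| - 2\mu |q|>g_+$ as soon as $|q|\gtrsim \|g_+\|_\infty$. Consequently, any such convex piecewise linear function touching $u$ from below at level near $-M$ touches only in $B_1^{\pm}$ and not on $T_1$.

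\textbf{Step 3: gradient image and area formula.} We then let $\Phi$ be the convex envelope of $-u_-$ built with respect to the piecewise linear family. Equivalently, we change variables $y_n = x_n$ for $x_n>0$ and $y_n=(\lambda/\Lambda)x_n$-type scaling below, which turns the kinked functions into planes. The image of the contact set under the generalized gradient then contains a ball of radius $cM - C\|g_+\|_\infty$, because slopes of size $\lesssim M$ still yield functions lying below $u$ on $\partial B_1$ and above $-M$ at the minimum point. On the contact sets in $B_1^{\pm}$ the function $u$ is a $C^{1,1}$-touched supersolution of $\mcal^-(D^2u;\lambda/n,\Lambda)\le f^{\pm}$. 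The standard Caffarelli--Cabr\'e argument (the remark on $u^{\pm}\in\overline S(\lambda/n,\Lambda;f^\pm)$, with $\Gamma_u$ being $C^{1,1}$ on the contact set) then gives $\det D^2\Gamma_u \le C (f^{\pm}_+)^n$ there. Integrating, we obtain
\[
(cM - C\|g_+\|)^n \le C\int_{\{u=\Gamma_u\}} (f_+^{\pm})^n,
\]
which yields the claim.

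The main obstacle is Step 2: designing the kinked comparison family so that it simultaneously excludes interface contact uniformly in $\theta$ and still has a gradient image of measure comparable to $M^n$. We would first try the linear change of variables in $x_n$ on the lower half, which makes kinks with fixed ratio correspond to planes, and check that $\{u=\Gamma_u\}$ measure is distorted only by universal factors.
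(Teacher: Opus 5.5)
Your Steps 1--2 follow the paper's idea: use convex kinked minorants that cannot touch $u$ on $T_1$. Discarding the parameters with $|q|\lesssim\|g_+\|_{L^\infty}$ is an acceptable substitute for the paper's subtraction of the barrier $\lambda^{-1}\|g_+\|_{L^\infty(T_R)}(x_n)_+$.

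\textbf{The gap is Step 3, which is the heart of the theorem.} Nothing in your argument turns the measure of the parameter set into a lower bound for $|\nabla\Gamma_u(\{u=\Gamma_u\}\cap(B_1\setminus T_1))|$, which is the quantity the area formula controls. For a parameter $q=(q',b)$, the contact point of $L_q$ may lie in one of two places:
\begin{itemize}
\item in $B_1^-$: since $a\ge b$, the lower piece extends to an affine minorant of $-u_-\chi_{B_1}$, so $(q',b)\in\partial\Gamma_u$;
\item in $B_1^+$: then it is $(q',a)$ that lies in $\partial\Gamma_u$.
\end{itemize}
So the parameter set is not a gradient image of $\Gamma_u$. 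Moreover, the classical inclusion $B_{M/3}\subset\partial\Gamma_u(B_1)$ does not survive removing $T_1$, because the transmission condition allows convex kinks of $\Gamma_u$ along $T_1$. For example, take $\Lambda=2\lambda$, $\theta=1$, $g=0$, $f^\pm=4\lambda$ and $u=-1+2|x|^2+\frac14(x_n)_+-\frac18(x_n)_-$. This $u$ lies in $\overline{S}$, is positive on $\partial B_1$, and the slopes with small $\xi'$ and $\frac18<\xi_n<\frac14$ are attained by $\Gamma_u$ only on $T_1$.

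The change of variables you propose does not repair this. With $a=\frac{\Lambda}{\lambda}b+\kappa|q|$ the ratio $a/b$ depends on $q$, so no single rescaling of $x_n$ turns all the $L_q$ into planes. If instead you impose a fixed ratio $a=\rho b$, then $\tcal^-(\nabla L^+,\theta\nabla L^-)\ge(\lambda\rho-\Lambda)b-2\mu|q'|$ for $b>0$, while for $b<0$ the kink is concave and gives no exclusion. So interface contact is ruled out only for slopes in a cone, not a ball. In addition, the convex envelope in the new variables is not $\Gamma_u$. Pulled-back contact points lie in $\{u=\Gamma_u\}$ only when the pulled-back planes are convex kinks.

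\textbf{The missing idea is a measure-counting argument on a cone.} The paper uses the fixed-ratio family $\ell_\xi=\xi'\cdot x'+\xi_n\{2\frac{\Lambda}{\lambda}(x_n)_+-(x_n)_-\}$. Its gradient is $\xi$ below the interface and $A\xi$ above, with $A=\mathrm{diag}(1,\dots,1,2\Lambda/\lambda)$. The parameters range over $\frac{M}{R}\Xi$, where $\Xi=\{\xi_n>\frac{2\mu}{\Lambda}|\xi'|,\ |\xi|<\frac{\lambda}{6\Lambda}\}$; there $\tcal^-\ge\Lambda\xi_n-2\mu|\xi'|>0$ excludes contact on $T_R$. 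The counting then goes as follows.
\begin{itemize}
\item Set $X_1=\frac{M}{R}\Xi\cap\nabla\Gamma(B_R\setminus T_R)$ and $X_2=\frac{M}{R}\Xi\setminus X_1$.
\item Every $\xi\in X_2$ must touch in $B_R^+$, so $A(X_2)\subset\nabla\Gamma(B_R\setminus T_R)$.
\item Since $\det A\ge1$, this gives $|\nabla\Gamma(B_R\setminus T_R)|\ge\max\{|X_1|,|X_2|\}\ge\frac12|\frac{M}{R}\Xi|$, a universal multiple of $(M/R)^n$.
\end{itemize}
The classical area-formula bound then finishes the proof.

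Your own family also works with this counting once you restrict to $b>0$. There $q\mapsto(q',a(q))$ is injective with Jacobian $\frac{\Lambda}{\lambda}+\kappa\frac{b}{|q|}\ge1$. Alternatively, a fixed-ratio change of variables could be pursued, provided you restrict to that cone and check that pulled-back contact points lie in $\{u=\Gamma_u\}$. Either way, as written, Step 3 asserts exactly the conclusion that needs proof.
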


\begin{proof}
Consider the piecewise linear barrier
\begin{equation}
\label{abp:bar}
\varphi(x) := \frac{1}{\lambda} \|g_{+}\|_{L^{\infty}(T_R)} (x_n)_{+}.
\end{equation}
Since $\varphi_{x_n}^{+} \geq 0$ and $\varphi_{x_n}^{-} = 0$, from~\eqref{tmin}, we have that
\[
\tcal^{-}(\nabla \varphi^{+},\theta \nabla \varphi^{-}; \lambda, \Lambda, \mu)
= \lambda \varphi_{x_n}^{+}
=  \|g_{+}\|_{L^{\infty}(T_R)},
\]
and hence by Lemma~\ref{lem:aux}
\begin{equation}
\label{abp:sup}
u- \varphi \in \overline{S}(\lambda, \Lambda, \mu, \theta; f^{+}, f^{-}, 0) \quad \text{ in } B_R.
\end{equation}

\medskip

For $\xi = (\xi', \xi_n) \in \R^{n-1} \times (0,\infty)$, let $\ell_{\xi}$ be the continuous piecewise linear function
\begin{equation}
\label{abp:piece}
\ell_{\xi} 
= \xi' \cdot x' + \xi_n\Big\{ 2\frac{\Lambda}{\lambda} (x_n)_{+} - (x_n)_{-}\Big\}.
\end{equation}
Since $(\ell_{\xi})_{x_n}^{+} = 2 \frac{\Lambda}{\lambda}\xi_n >0$ and $(\ell_{\xi})_{x_n}^{-} = \xi_n >0$, recalling $0 \leq \theta \leq 1$, from~\eqref{tmin} we have that
\[
\tcal^{-} (\nabla \ell_{\xi}^{+}, \theta \nabla \ell_{\xi}^{-}; \lambda, \Lambda, \mu) 
= \lambda(\ell_{\xi})_{x_n}^{+} - \Lambda (\ell_{\xi})_{x_n}^{-} - \mu(1+\theta) |\nabla' \ell_{\xi}|
\geq \Lambda \xi_n - 2 \mu |\xi'|.
\]
Therefore, if a vertical translation of $\ell_{\xi}$ touches $u-\varphi$ from below somewhere at $T_R$, then 
\begin{equation}
\label{abp:trans}
\Lambda \xi_n - 2\mu |\xi'| \leq 0
\end{equation}
by definition of viscosity supersolution.

\medskip

Let $v$ be the auxiliary function
\begin{equation}
\label{abp:v}
v := u- \varphi + \sup_{\partial B_R} (u-\varphi)_{-}
\end{equation}
and observe that $v \geq 0$ on $\partial B_R$.
We assume that its negative part is nontrivial, with
\begin{equation}
\label{abp:mdef}
M := \sup_{B_R} v_{-} > 0.
\end{equation}
Let $c_{\xi} \in \R$ be such that the vertical translation $\widetilde{\ell}_{\xi} := \ell_{\xi} + c_{\xi}$ touches $- v_{-} \chara{B_R}$ from below on $\overline{B}_{2R}$.
Finally, recall that $\Gamma_{v}$ is the convex envelope of $- v_{-} \chara{B_R}$ in $B_{2R}$, i.e., by definition
\[
\Gamma_v(x) = \sup\big\{
\ell(x) \colon \quad \ell \text{ affine}, \quad \ell \leq - v_{-} \chara{B_R} \, \text{ in } B_{2R}
\big\}.
\]

\medskip

We note the following properties of $\widetilde{\ell}_{\xi}$ depending on the dimensions of the parameter $\xi$:
\begin{enumerate}[label=(\roman*)]
\item If $|\xi| < \frac{\lambda}{6 \Lambda} \frac{M}{R}$, then $|\nabla \ell_{\xi}^{\pm}| < \frac{M}{3R}$ and hence $\widetilde{\ell}_{\xi}$ touches $-v_{-} \chara{B_R}$ from below in $B_R$.
\item If $\xi_n > \frac{2 \mu}{\Lambda} |\xi'|$, then by~\eqref{abp:trans} and the discussion above, the function $\widetilde{\ell}_{\xi}$ touches $-v_{-}\chara{B_R}$ from below in $\overline{B}_{2R} \setminus T_R$, i.e., away from the interface~$T_R$.
\end{enumerate}
Motivated by conditions (i) and (ii), we define the coefficient set 
\[
\Xi := \left\{\xi = (\xi', \xi_n) \in \R^{n-1}\times \R \colon \quad  \xi_n > \frac{2\mu}{\Lambda} |\xi'|, \quad |\xi| < \frac{\lambda}{6 \Lambda}\right\}.
\]
By construction, each $\xi \in \frac{M}{R}\Xi$ yields a piecewise linear function $\widetilde{\ell}_{\xi}$ below $- v_{-} \chara{B_R}$ in $\overline{B}_{2R}$ and touching $-v_{-}$ in $B_R\setminus T_R$.
Therefore, recalling definition~\eqref{abp:piece}, we have the following alternative depending on whether such a tangency point lies in $B_R^{-}$ or in $B_R^{+}$:
\[
\nabla \widetilde{\ell}^{-}_{\xi} = \xi 
\in \nabla \Gamma_{v}(B_R \setminus T_R)
\qquad \text{ or } \qquad 
\nabla \widetilde{\ell}^{+}_{\xi} = A (\xi) \in \nabla \Gamma_{v}(B_R \setminus T_R),
\]
where $A \colon \R^{n} \to \R^{n}$ is the linear map given by
\[
A(\xi', \xi_n) = (\xi', \textstyle\frac{2\Lambda}{\lambda} \xi_n) \qquad \text{ for } (\xi', \xi_n) \in \R^{n-1} \times \R.
\]

\medskip

We partition the rescaled coefficients by $\frac{M}{R}\Xi = X_1 \cup X_2$, where
\[
X_1 := \textstyle\frac{M}{R}\Xi
\cap \nabla \Gamma(B_R \setminus T_R)
\qquad \text{ and } \qquad 
X_2 := \textstyle \frac{M}{R}\Xi \setminus X_1.
\]
From the dichotomy above, it follows that
\[
X_1 \cup A(X_2) \subset \nabla \Gamma_{v}(B_R \setminus T_R)
\]
and taking the Lebesgue measure on both sides, we obtain the lower bound
\begin{equation}
\label{eq:trinity}
|\nabla \Gamma_{v}(B_R \setminus T_R)| \geq \max\left\{|X_1|, |A(X_2)| \right\} \geq \max\left\{|X_1|, |X_2| \right\} \geq \frac{1}{2}\left|\frac{M}{R}\Xi\right| = \frac{1}{2} |\Xi| \left(\frac{M}{R}\right)^{n}.
\end{equation}
On the other hand, since $\Gamma_v$ is $C^{1,1}$ away from the interface and recalling~\eqref{abp:sup} and~\eqref{abp:v}, by the classical argument we deduce an upper bound
\begin{equation}
\label{eq:trin2}
|\nabla \Gamma_{v}(B_R \setminus T_R)|
\leq \int_{\{v = \Gamma_v\}\cap (B_R\setminus T_R)} D^2 \Gamma_{v}
\leq C \left( \int_{\{u = \Gamma_u\}\cap B_R^{+}} (f_{+}^{+})^{n} 
+ \int_{\{u = \Gamma_u\}\cap B_R^{-}} (f_{+}^{-})^{n}
\right),
\end{equation}
where in the last line we used that $\{v = \Gamma_v\} \subset \{u = \Gamma_u\}$, a fact which follows by convexity of $\varphi$.
Since $\Xi$ has universal positive measure,~\eqref{eq:trinity} and~\eqref{eq:trin2} yield
\begin{equation}
\label{abp:puerta}
M \leq C R \left(\|f_{+}^{+}\|_{L^{n}(\{u = \Gamma_{u}\}\cap B_R^{+} )} + \|f_{+}^{-}\|_{L^{n}(\{u = \Gamma_{u}\}\cap B_R^{-} )}\right).
\end{equation}

\medskip

Combining~\eqref{abp:bar},~\eqref{abp:v},~\eqref{abp:mdef}, and~\eqref{abp:puerta}, we finally deduce
\[
\begin{split}
\sup_{B_R} u_{-}
&\leq \sup_{\partial B_R} (u-\varphi)_{-} + \sup_{B_R} v_{-}\\
&\leq \sup_{\partial B_R} u_{-}+ C R \|g_{+}\|_{L^{\infty}(T_R)} + C R \left(  
\|f_{+}^{+}\|_{L^{n}(\{u = \Gamma_{u}\}\cap B_R^{+} )} + \|f_{+}^{-}\|_{L^{n}(\{u = \Gamma_{u}\}\cap B_R^{-} )}
\right),
\end{split}
\]
which was the claim.
\end{proof}

\medskip

The ABP maximum principle admits an extension to bounded domains:

\begin{cor}
Let $\Omega \subset B_R \subset \R^n$ be a bounded open set and let $u \in \overline{S}(\lambda,\Lambda,\mu, \theta; f^{+}, f^{-}, g)$ in $\Omega$, with $u \in \lowers(\overline{\Omega})$, $f^{\pm} \in C(\Omega^{\pm}\cup T) \cap L^{\infty}(\Omega)$, and $g \in C(T) \cap L^{\infty}(T)$.

There is a universal constant $C > 0$ such that
\[
\sup_{\Omega} u_{-} \leq \sup_{\partial \Omega} u_{-} + 
C R \left( 
\|f_{+}^{+}\|_{L^{n}(\{u = \Gamma_u\} \cap \Omega^{+})} 
+ \|f_{+}^{-}\|_{L^{n}(\{u = \Gamma_u\} \cap \Omega^{-})}
+\|g_{+}\|_{L^{\infty}(T)}
\right),
\]
where $\Gamma_u$ denotes the convex envelope of $- u_{-} \chara{\Omega}$ in the ball $B_{2 R}$.
\end{cor}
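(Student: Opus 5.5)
The plan is to repeat the proof of Theorem~\ref{thm:abp} essentially verbatim, with $B_R$ replaced by $\Omega$ wherever the equation is used, and $B_R$ kept only as the ambient ball in which the convex envelope and the gradient image are computed. Since $f^{\pm}$ and $g$ are bounded, all quantities on the right-hand side are finite. First I subtract the same barrier $\varphi = \frac{1}{\lambda}\|g_{+}\|_{L^{\infty}(T)}(x_n)_{+}$. Lemma~\ref{lem:aux} holds in any open set, so it gives $u-\varphi \in \overline{S}(\lambda,\Lambda,\mu,\theta; f^{+}, f^{-}, 0)$ in $\Omega$. Next I set $v := u - \varphi + \sup_{\partial\Omega}(u-\varphi)_{-}$, so that $v \geq 0$ on $\partial\Omega$ and $v \in \lowers(\overline{\Omega})$, and I put $M := \sup_\Omega v_{-}$, assumed positive.

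Next I consider the piecewise linear functions $\ell_\xi$ from~\eqref{abp:piece} and translate them vertically until they touch $-v_{-}\chara{\Omega}$ from below on $\overline{B}_{2R}$. The key geometric step is that for $|\xi| < \frac{\lambda}{6\Lambda}\frac{M}{R}$ the touching point lies in $\Omega$ itself, not on $\partial\Omega$ or outside it. Indeed, $-v_{-}\chara{\Omega}$ vanishes outside $\Omega$ and, by lower semicontinuity, $v_{-}$ is upper semicontinuous with $v_{-}=0$ on $\partial\Omega$. The infimum $-M$ is therefore attained at some $x_1 \in \Omega$. Since $\Omega \subset B_R$, every point of $B_{2R}$ is within distance $3R$ of $x_1$, so the oscillation of $\ell_\xi$ is less than $M$. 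The minimum of $-v_{-}\chara{\Omega} - \ell_\xi$ is therefore strictly negative, hence attained only where $v_{-}>0$, which lies inside $\Omega$. Attainment of the minimum uses upper semicontinuity of $v_{-}\chara{\Omega}$ on $\overline{B}_{2R}$, which holds because $v_{-}=0$ on $\partial\Omega$. At such a point $v<0$, so $v$ itself is touched from below there.

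When $\xi \in \frac{M}{R}\Xi$, the supersolution transmission condition together with~\eqref{abp:trans} excludes touching points on $T$. The touching point therefore lies in $\Omega^{+}\cup\Omega^{-}$. Following the proof of Theorem~\ref{thm:abp}, this gives $X_1 \cup A(X_2) \subset \nabla\Gamma_v(\Omega\setminus T)$ and hence the lower bound $|\nabla\Gamma_v(\Omega\setminus T)| \geq \frac12|\Xi|(M/R)^n$. For the upper bound I use the classical ABP argument on $\Omega^{\pm}$, where $v$ is a supersolution of the Pucci inequality $\mcal^{-}(D^2 v;\frac{\lambda}{n},\Lambda)\le f^{\pm}$. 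It gives that $\Gamma_v$ is $C^{1,1}$ near contact points in $\Omega\setminus T$ and $\det D^2\Gamma_v \le C (f^{\pm}_{+})^n$ on the contact set. This yields $M \le CR(\|f^{+}_{+}\|_{L^n} + \|f^{-}_{+}\|_{L^n})$ over the contact sets. I then pass from $\{v=\Gamma_v\}$ to $\{u=\Gamma_u\}$ by the convexity of $\varphi$, exactly as in Theorem~\ref{thm:abp}. Finally, $\sup_{\partial\Omega}(u-\varphi)_{-} \le \sup_{\partial\Omega}u_{-} + \sup_{\overline{\Omega}}\varphi \le \sup_{\partial\Omega}u_{-} + CR\|g_{+}\|_{L^{\infty}(T)}$, since $|x_n|\le R$ on $\Omega$.

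The main obstacle is the localization of contact points described above. In the ball case this was immediate from condition (i) in the proof of Theorem~\ref{thm:abp}. Here, in a general domain, one must check carefully that the contact set stays away from $\partial\Omega$ using only lower semicontinuity up to the boundary. The other delicate point is the inclusion of contact sets for $v$ versus $u$, where a shift by the constant $\sup_{\partial\Omega}(u-\varphi)_-$ and the convexity of $\varphi$ must be tracked. If that inclusion causes trouble, I would simply state the bound with the contact set of $v$, which is what the proof naturally gives.
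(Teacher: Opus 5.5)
Your overall route is the paper's. Its proof of this corollary reruns the proof of Theorem~\ref{thm:abp} (after extending the function to $B_R$) and notes that the contact points lie in $\Omega^{+}\cup\Omega^{-}$. Your direct localization of the touching points, via upper semicontinuity of $v_{-}\chara{\Omega}$ on $\overline{B}_{2R}$, is correct and more careful than what the paper writes.

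The genuine gap is the step you flagged and then imported from Theorem~\ref{thm:abp}. The inclusion $\{v=\Gamma_v\}\subset\{u=\Gamma_u\}$ does not follow from convexity of $\varphi$, and it is false in general. The same flaw sits in the paper's own proof, at~\eqref{eq:trin2}. Here is a counterexample:
\begin{itemize}
\item Take $\Omega=B_R$ and $g\equiv\lambda a$, so that $\varphi=a(x_n)_{+}$.
\item Let $u=-1+a(x_n)_{+}-\psi$. Here $\psi\geq 0$ is a smooth bump of height $H>0$, radially decreasing about a point $x^{*}$ with $D^2\psi(x^{*})<0$, and supported in $B_R\cap\{x_n>R/4\}$. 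Take $a>4(1+H)/R$.
\item Put $f^{\pm}:=\mcal^{-}(D^2u;\frac{\lambda}{n},\Lambda)$. Then $u\in\overline{S}(\lambda,\Lambda,\mu,\theta;f^{+},f^{-},g)$ in $B_R$, $v=-\psi$ and $M=H$.
\item $\{v=\Gamma_v\}$ contains a neighborhood of $x^{*}$ on which $f^{+}>0$.
\item However, $u>0$ on the support of $\psi$, while $\Gamma_u<0$ throughout $B_{2R}$: it is convex, nonpositive, and $\Gamma_u(0)\leq -1$. So $\{u=\Gamma_u\}$ misses the support of $\psi$, and $f^{\pm}_{+}=0$ on $\{u=\Gamma_u\}$.
\end{itemize}
The intermediate bound $M\leq CR\|f_{+}\|_{L^n(\{u=\Gamma_u\})}$ then reads $H\leq 0$. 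The final inequality is not contradicted here, since $\sup_{B_R}u_{-}=1=\sup_{\partial B_R}u_{-}$. Your fallback of stating the bound on $\{v=\Gamma_v\}$ is correct, but it proves a different inequality from the one claimed.

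The way out is to drop the barrier and let $g$ shrink the set of admissible slopes instead.
\begin{itemize}
\item If a translate of $\ell_\xi$ touches $u$ itself from below at a point of $T$, the computation behind~\eqref{abp:trans} gives $\Lambda\xi_n-2\mu|\xi'|\leq\|g_{+}\|_{L^{\infty}(T)}$.
\item Fix a universal $t_0>0$ such that $\Xi_{t_0}:=\{\xi\in\Xi\colon \xi_n>\frac{2\mu}{\Lambda}|\xi'|+t_0\}$ satisfies $|\Xi_{t_0}|\geq\frac12|\Xi|$.
\item If $\|g_{+}\|_{L^{\infty}(T)}\geq\Lambda t_0 M/R$, you already have $M\leq CR\|g_{+}\|_{L^{\infty}(T)}$.
\item Otherwise every $\xi\in\frac{M}{R}\Xi_{t_0}$ produces a touching point in $\Omega\setminus T$. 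Your measure argument then bounds $M$ by the $L^n$ norms of $f^{\pm}_{+}$ on the contact set of the very function you touched, so there is nothing to transfer.
\end{itemize}
To obtain literally $\{u=\Gamma_u\}$, drop the constant shift too. Set $M:=\sup_{\Omega}u_{-}-\sup_{\partial\Omega}u_{-}$. Your localization then works for $-u_{-}\chara{\Omega}$ itself, using $\liminf_{x\to\partial\Omega}\min\{u,0\}\geq-\sup_{\partial\Omega}u_{-}$ and compactness in place of upper semicontinuity at $\partial\Omega$.
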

\begin{proof}
It is not difficult to see that
$- u_{-} + \sup_{\partial \Omega} u_{-} \in \overline{S}(\lambda,\Lambda,\mu,\theta; f^{+}_{+}\chara{\Omega^{+}},f^{-}_{+} \chara{\Omega^{-}},g_{+} \chara{T})$ in $B_R$.
Using the piecewise linear barrier $\varphi = \frac{1}{\lambda} \|g_{+}\|_{L^{\infty}(T)}$ and following the proof of Theorem~\ref{thm:abp}, we see that the contact points in $B_{R}$ belong to $\Omega^{+} \cup \Omega^{-}$.
The result follows.
\end{proof}

\begin{rem}
By the invariance property of the extremal operators, we obtain analogous statements for subsolutions $u \in \underline{S}(\lambda, \Lambda, \mu, \theta; f^{+}, f^{-}, g)$ in $\Omega$, with $u \in \uppers(\overline{\Omega})$.
\end{rem}


\medskip

\section{Comparison principle and applications}
\label{sec:unique}

Here, we consider the constant-coefficient inhomogeneous transmission problem in cylinders
\begin{equation}
\label{eq:const}
\begin{cases}
F^{\pm}(D^2 u) = f^{\pm}(x) & \text{ in } Q_R^{\pm},\\
G(\nabla u^{+}, \theta \nabla u^{-}) = g(x) &\text{ on } T_R,
\end{cases}
\end{equation}
where $Q_R \subset \R^{n}$ is the standard cylinder
\[
Q_{R} = B_R' \times (-R, R) \subset \R^{n-1} \times \R
\]
and in particular
\[
Q_{R}^{+} = B_R' \times (0, R), \qquad Q_{R}^{-} = B_R' \times (-R, 0), \qquad T_R = B_R' \times \{0\}.
\]

\medskip

Our main goal is to establish a comparison principle for viscosity solutions to~\eqref{eq:const}.
Such a result is fundamental to both the existence and uniqueness theory for the associated Dirichlet problem, as well as the optimal piecewise $C^{1,\alpha}$ regularity in~\cite{E}. 
Our approach is adapted from~\cite[Section~3]{DFSfb,DFS} and~\cite[Section~4]{SS}, where the authors studied linear Neumann transmission conditions of the form $G = \xi^{+}_n - \xi^{-}_{n}$.
The proof in the nonlinear oblique case~\eqref{eq:const} follows the same strategy, with some modifications (see especially Remark~\ref{rem:mistake} after the proof of Theorem~\ref{thm:comp}).

\medskip

Let $\rho < R$ and $u \in \uppers(Q_R)$.
For $\vep > 0$, we define the upper envelope of $u$ in the horizontal direction by
\[
u^{\vep}(x', x_n) = \sup_{y' \in B_\rho'}\left[u(y', x_n) - \frac{1}{\vep} |y' -x'|^2\right] \qquad \text{ for } x= (x',x_n) \in Q_{\rho}.
\]
This is the classical sup-convolution of Jensen applied to the trace $u(\cdot, x_n)$, for each $|x_n| < \rho$.
Such a regularization of $u$ has nice properties:

\begin{lem}[c.f.~Lemma~3.1 in~\cite{DFS} and Proposition~4.5 in~\cite{SS}]
\label{lem:envelopes}
The following properties hold:
\begin{enumerate}[label=\rm{(\roman*)}]
\item 
$u^{\vep} \in \uppers(\overline{Q}_{\rho})$, $u^{\vep} \downarrow u$ pointwise in $Q_{\rho}$ as $\vep \downarrow 0$, and hence for any sequence $\vep_k \downarrow 0$
\[
u(x) = \textstyle\limsup^{\star} u^{\vep_k}(x) 
\displaystyle = \lim_{k \to \infty} \sup_{B_{1/k}(x)} u^{\vep_k} \quad \text{ for } x \in Q_{\rho};
\footnotemark\label{foot:half}
\footnotetext{In general, if $\{u_k\}$ is a monotone decreasing sequence of bounded and \usc functions in a set $E \subset \R^n$, then the pointwise limit $u$ must be the half-relaxed limit $u(x) = \limsup^{\star} u_k(x) = \lim_{k \to \infty} \sup_{B_{1/k}(x) \cap E} u_k(x)$. To see this, let $x \in E$ and $\vep >0$. By pointwise convergence $u_k(x) \leq u(x) + \vep$ for $k \geq k_0 = k_0(x,\vep)$, and by upper semicontinuity of $u_{k_0}$ we have that $\sup_{B_{\delta}(x) \cap E} u_{k_0} \leq u_{k_0}(x) + \vep$, where $\delta = \delta(x, \vep) > 0$. Finally, by monotonicity \[ \textstyle \limsup^{\star} u_k(x) = \displaystyle \lim_{k \to \infty }\sup_{B_{1/k}(x) \cap E} u_k \leq \sup_{B_{\delta}(x) \cap E} u_{k_0} \leq u_{k_0}(x) + \vep \leq u(x)+ 2\vep \] and letting $\vep \downarrow 0$ we deduce $\limsup^{\star} u_k \leq u$ in $E$. The reverse inequality is trivial.}
\]
\item 
For each $|x_n| < \rho$, the trace $u^{\vep}(\cdot, x_n)$ is Lipschitz continuous on $\overline{B}_{\rho}' \subset \R^{n-1}$ and pointwise second order differentiable a.e. in $B_{\rho}' \subset \R^{n-1}$;\footnote{That is, for each $|x_n| < \rho$ and almost every $x' \in B'_{\rho}$, there is a quadratic polynomial $P$ such that $u^{\vep}(y',x_n) = P(y') + o(|y'-x'|^2)$ as $y' \to x'$.}
\item If $u$ is a viscosity subsolution to~\eqref{eq:const} in $Q_{R}$, then for $r \leq \rho- r_{\vep}$, with $r_{\vep} := \left(2\vep \|u\|_{L^{\infty}(B_{\rho})}\right)^{1/2}$, the function $u^{\vep}$ is a viscosity subsolution to
\[
\begin{cases}
F^{\pm}(D^2 u^{\vep}) = f^{\pm}(x) - \omega_{f^{\pm}}(r_{\vep}) & \text{ in } Q_{r}^{\pm},\\
G(\nabla (u^{\vep})^{+}, \theta \nabla (u^{\vep})^{-}) = g(x) - \omega_{g}(r_{\vep}) &\text{ on } T_r,
\end{cases}
\]
where $\omega_{f^{\pm}}$ and $\omega_{g}$ are the moduli of continuity on $Q_{\rho}$ given by
\[
\omega_{f^{\pm}}(r) = \sup_{x, y \in Q^{\pm}_{\rho}, \, |x - y| <r} 
|f^{\pm}(x) - f^{\pm}(y)|,
\qquad
\omega_{g}(r) = \sup_{x, y \in T_{\rho}, \, |x - y| <r} 
|g(x) - g(y)|.
\]
\end{enumerate}
\end{lem}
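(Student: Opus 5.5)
The plan is to treat $u^{\vep}$ as a family, indexed by $x_n$, of Jensen sup-convolutions in the tangential variables. Everything then follows from three facts: the standard properties of sup-convolutions, upper semicontinuity of $u$, and the invariance of problem~\eqref{eq:const} under horizontal translations. Throughout I assume $u$ is bounded on $\overline{Q}_\rho$. This holds because $u$ is \usc on $Q_R \supset \overline{Q}_\rho$, which also gives $\|u\|_{L^\infty(B_\rho)}<\infty$. I also replace $B'_\rho$ by its closure in the supremum where needed. The function $y'\mapsto u(y',x_n)-\frac1\vep|y'-x'|^2$ is \usc on the compact set $\overline{B}'_\rho$, so the supremum is attained at some $y'_\vep$.

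For (i), taking $y'=x'$ gives $u^\vep\ge u$. The quantity $-\frac1\vep|y'-x'|^2$ decreases as $\vep\downarrow0$, so $u^\vep$ is monotone decreasing in $\vep$. At a maximizer we have $u(x)\le u(y'_\vep,x_n)-\frac1\vep|y'_\vep-x'|^2$, which forces $|y'_\vep-x'|\le (2\vep\|u\|_\infty)^{1/2}=r_\vep$. Upper semicontinuity of $u$ then gives $\limsup_{\vep\downarrow 0} u^\vep(x)\le \limsup_{\vep\downarrow0} u(y'_\vep,x_n)\le u(x)$, hence $u^\vep\downarrow u$. Upper semicontinuity of $u^\vep$ on $\overline{Q}_\rho$ follows the same way. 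Take $x^k\to x$ with maximizers $y'^k$ and pass to a subsequence with $y'^k\to \bar y'$. Then $\limsup_k u^\vep(x^k)\le u(\bar y',x_n)-\frac1\vep|\bar y'-x'|^2\le u^\vep(x)$, using that $u$ is \usc jointly in $(y',x_n)$. The half-relaxed limit identity is then exactly the footnoted general fact about monotone decreasing bounded \usc sequences.

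For (ii), fix $x_n$ and observe that
\[
u^\vep(x',x_n)+\tfrac1\vep|x'|^2=\sup_{y'\in \overline{B}'_\rho}\big[u(y',x_n)-\tfrac1\vep|y'|^2+\tfrac2\vep y'\cdot x'\big]
\]
is a supremum of affine functions of $x'$. It is therefore convex. The slopes $\frac2\vep y'$ are bounded by $\frac{2\rho}{\vep}$ and the family is bounded above, so the supremum is finite and uniformly Lipschitz on $\overline{B}'_\rho$. Hence the trace $u^\vep(\cdot,x_n)$ is Lipschitz and semiconvex. Alexandrov's theorem for convex functions then yields pointwise second order differentiability a.e.\ in $B'_\rho$.

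For (iii), let a piecewise quadratic $P$ touch $u^\vep$ from above at $x_0\in Q_r$. Let $y'_0$ be a maximizer for $u^\vep(x_0)$ and set $h:=x_0'-y_0'$, so that $|h|\le r_\vep$ and $y_0:=(y_0',x_{0,n})\in Q_{\rho}$, since $r\le\rho-r_\vep$. For $x$ near $y_0$, the point $x'+h$ is admissible in the supremum defining $u^\vep$ at $(x'+h,x_n)$, so
\[
u(x)\le u^\vep(x'+h,x_n)+\tfrac1\vep|h|^2\le P(x'+h,x_n)+\tfrac1\vep|h|^2,
\]
with equality at $x=y_0$. Thus $\widetilde P(x):=P(x+(h,0))+\frac1\vep|h|^2$ touches $u$ from above at $y_0$. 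This is the key point, and the reason the convolution is only horizontal: the shift $(h,0)$ preserves $\{x_n>0\}$, $\{x_n<0\}$ and $T$. Consequently $\widetilde P$ is again piecewise quadratic with respect to the same interface, and $y_0$ lies in the same region as $x_0$. Since the operators have constant coefficients, $D^2\widetilde P^\pm(y_0)=D^2P^\pm(x_0)$ and $\nabla\widetilde P^\pm(y_0)=\nabla P^\pm(x_0)$. The subsolution property of $u$ at $y_0$ then gives either $F^\pm(D^2P(x_0))\ge f^\pm(y_0)\ge f^\pm(x_0)-\omega_{f^\pm}(r_\vep)$ or $G(\nabla P^+(x_0),\theta\nabla P^-(x_0))\ge g(y_0)\ge g(x_0)-\omega_g(r_\vep)$.

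The main obstacle I expect is bookkeeping rather than ideas. One has to check that the maximizer exists and lies where $u$ is defined, that $|h|\le r_\vep$ keeps $y_0$ (and a neighborhood of it) inside $Q_\rho$, and that $|x_0-y_0|<r_\vep$ strictly or up to closure, so that the moduli $\omega$ apply as defined.
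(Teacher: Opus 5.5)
Your proof is correct and matches the paper's. For (iii), the paper uses exactly your argument: shift the piecewise quadratic test function horizontally by $x_0'-x_\vep'$, which preserves the interface and, because the coefficients are constant, the derivatives. The paper simply calls (i) and (ii) classical, and your sup-convolution and Alexandrov arguments are the standard ones.

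One small correction: upper semicontinuity bounds $u$ only from above on compacts. So finiteness of $\|u\|_{L^{\infty}(B_\rho)}$, which your localization $|y_\vep'-x'|\le r_\vep$ uses, is an implicit hypothesis of the lemma (built into the definition of $r_\vep$), not a consequence of $u\in \uppers(Q_R)$.
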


\begin{rem}
An analogous statement holds for $u \in \lowers(Q_R)$ and its lower envelopes
\[
u_{\vep}(x', x_n) 
= \inf_{y' \in B_\rho'}\left[u(y', x_n) + \frac{1}{\vep} |y' -x'|^2\right]
\qquad \text{ for } x= (x',x_n) \in Q_{\rho}.
\]
\end{rem}

\medskip

The proof of Lemma~\ref{lem:envelopes} turns out to be essentially the same as for linear Neumann transmission conditions:

\begin{proof}
The regularity properties (i) and (ii) are classical.
We only check the subsolution property (iii) for the transmission condition, the one for the bulk equations being analogous.

\medskip

Let $P$ be a piecewise quadratic function touching $u^{\vep}$ from above at $x_0 \in T_{r}$.
Since $u^{\vep}(x_0) = u(x_{\vep}) - \vep^{-1}|x' -x'_{\vep}|^2$ for some $x_{\vep} \in \overline{T}_{\rho}$ with $|x_{\vep} - x_{0}| \leq r_{\vep}$, it follows that the translated piecewise quadratic function $Q(y) := P(y + x_0-x_{\vep}) + \vep^{-1} |x' -x'_{\vep}|^2$ touches $u$ at $x_{\vep}$.
By the transmission condition and uniform ellipticity, we then have that
\[
\begin{split}
G(\nabla P^{+}(x_{0}),\theta \nabla P^{-}(x_{0})) = 
G(\nabla Q^{+}(x_{\vep}),\theta \nabla Q^{-}(x_{\vep})) \geq g(x_{\vep}) \geq g(x_0) - \omega_{g}(r_{\vep}),
\end{split}
\]
whence the claim follows.
\end{proof}

\medskip

The comparison principle states that the difference of viscosity sub- and supersolutions to~\eqref{eq:const} belongs to the subsolution class $\underline{S}$.
More precisely, we have the following:

\begin{thm}[Comparison principle -- c.f. Theorem~4.7 in~\cite{SS}]
\label{thm:comp}
Let $\{F^{\pm},G\}$ be constant-coefficient operators with ellipticity constants $0 < \lambda \leq \Lambda$ and $\mu \geq 0$.
Let $\{f^{\pm}, \widetilde{f}^{\pm}\}$ be functions in $C(Q^{\pm}_R \cup T_R)$, let $\{g, \widetilde{g}\}$ be functions in $C(T_R)$, and let $0 \leq \theta \leq 1$ be a constant.

If $u \in \uppers(Q_R)$ and $v \in \lowers(Q_R)$ satisfy
\[
\begin{cases}
F^{\pm}(D^2 u) \geq f^{\pm}(x) & \text{ in } Q_R^{\pm},\\
G(\nabla u^{+}, \theta \nabla u^{-}) \geq g(x) & \text{ on } T_R,
\end{cases}
\quad \text{ and } \quad
\begin{cases}
F^{\pm}(D^2 v) \leq \widetilde{f}^{\pm}(x) & \text{ in } Q_R^{\pm},\\
G(\nabla v^{+}, \theta \nabla v^{-}) \leq \widetilde{g}(x) & \text{ on } T_R,
\end{cases}
\]
respectively, in the viscosity sense, then
\[
u - v \in \underline{S}(\lambda,\Lambda,\mu,\theta; f^{+} - \widetilde{f}^{+}, f^{-} - \widetilde{f}^{-}, g - \widetilde{g}) \quad \text{ in } Q_R.
\]
\end{thm}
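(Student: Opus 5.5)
The plan follows the strategy of \cite{DFS,SS}: regularize $u$ and $v$ by the horizontal sup/inf-convolutions of Lemma~\ref{lem:envelopes}, verify the subsolution property for the regularized difference, and pass to the limit with Proposition~\ref{prop:closed}. The bulk part of the claim is classical (the difference of a sub- and a supersolution of constant-coefficient uniformly elliptic equations lies in $\underline{S}(\frac{\lambda}{n},\Lambda; f^{\pm}-\widetilde f^{\pm})$, as in \cite{CC}). So the work is entirely at interface points.

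Fix $\rho<R$ and write $w_\vep:=u^\vep-v_\vep$. By Lemma~\ref{lem:envelopes}(iii), $u^\vep$ and $v_\vep$ are sub/supersolutions with data shifted by $\omega(r_\vep)$. Each horizontal trace is semiconvex/semiconcave, with second derivatives bounded below/above by $\mp 2/\vep$.

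Let $P$ be piecewise quadratic touching $w_\vep$ from above at $x_0\in T_r$. By Lemma~\ref{lem:trick} we may take the touching strict and $F$-values of $D^2P^\pm$ very negative. We must show $\tcal^{+}(\nabla P^{+}(x_0),\theta\nabla P^{-}(x_0))\geq g(x_0)-\widetilde g(x_0)-2\omega_g(r_\vep)$. The idea is to produce, near $x_0$, a point where $u^\vep$ and $v_\vep$ are both touched by admissible functions whose gradients differ by (approximately) $\nabla P^\pm$. Then the transmission inequalities for $u^\vep$ and $v_\vep$ can be subtracted using the ellipticity of $G$:
\[
G(\xi^+,\theta\xi^-)-G(\eta^+,\theta\eta^-)\leq \tcal^{+}(\xi^+-\eta^+,\theta(\xi^--\eta^-)).
\]

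To produce such a point, first consider $u^\vep - P$ on $T_r$. The trace $v_\vep(\cdot,0)$ is semiconcave, hence pointwise twice differentiable a.e. by Lemma~\ref{lem:envelopes}(ii). Apply Jensen's lemma (ABP on the interface in $n-1$ dimensions) to the semiconvex function $u^\vep(\cdot,0)-v_\vep(\cdot,0)-P(\cdot,0)-\ell$, perturbed by small linear functions $\ell$. This yields a positive-measure set of maximum points near $x_0'$, and we choose one, $x_1'$, where both traces are twice differentiable. At $x_1=(x_1',0)$ the tangential second-order expansions of $u^\vep$ and $v_\vep$ are then known.

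The main obstacle is the normal direction: $u^\vep$ is not regular in $x_n$. The plan is to build, on each side, a piecewise quadratic $\Phi$ touching $v_\vep$ from below at $x_1$. Its tangential part comes from the second-order expansion of $v_\vep(\cdot,0)$, and its normal slopes are $\partial_n\Phi^\pm$, with a large negative (resp. positive) $x_n^2$ coefficient in the spirit of the barrier in Proposition~\ref{prop:deg}. We then need that $\Phi+P$ touches $u^\vep$ from above. For this we use punctual boundary regularity of Ma--Wang \cite{MW}: $v_\vep^{\pm}$ solves a fully nonlinear equation in $Q^\pm$ with boundary data on $T$ that is $C^{1,1}$ at $x_1'$ (by twice differentiability), so $v_\vep^\pm$ is pointwise $C^{1,\alpha}$ at $x_1$ from each side. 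This gives well-defined one-sided normal derivatives and a $C^{1,\alpha}$ expansion. Using this, or the alternative that the bulk inequality already fails by Lemma~\ref{lem:trick}, the touching point lies on $T$.

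Subtracting the two transmission inequalities at $x_1$ and letting $\ell\to0$, $x_1\to x_0$ gives the claim for $w_\vep$, up to errors in $r_\vep$. Remark~\ref{rem:mistake} flags exactly this point: one must ensure the tangential gradients of the two test functions match, which the common tangential expansion guarantees. Finally, by Lemma~\ref{lem:envelopes}(i), $\limsup^\star w_{\vep_k}\geq u-v$. I expect equality after using monotonicity of both envelopes, with the footnote's argument applied to the decreasing family $u^\vep-v_\vep$. Proposition~\ref{prop:closed} applied to the extremal operators then gives $u-v\in\underline S$ in $Q_\rho$, and $\rho\uparrow R$ finishes.
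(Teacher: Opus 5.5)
Your overall frame matches the paper's proof: horizontal sup/inf-convolutions, Jensen/ABP on the interface trace to pick a point $y_0$ where both traces are twice differentiable, and closedness to let $\vep\downarrow 0$. The central step, however, fails.

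\textbf{The gap.} You apply Ma--Wang to $v_\vep^{\pm}$ on the grounds that it solves a fully nonlinear equation in $Q^{\pm}$. It does not. $v_\vep$ only satisfies the one-sided inequality $F^{\pm}(D^2 v_\vep)\le \widetilde f^{\pm}+\widetilde c_\vep$. Moreover, the inf-convolution acts only in $x'$, so $v_\vep$ is merely lower semicontinuous in $x_n$. Ma--Wang's pointwise $C^{1,\alpha}$ estimate is for continuous solutions. A supersolution in a half-ball with smooth boundary data need not even have a finite normal derivative at the boundary (think of $x_n^{1/2}$ for the Laplacian). The paper's proof notes explicitly that $u^\vep$ and $v_\vep$ need not be piecewise differentiable at $y_0$.

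Without a genuine two-sided piecewise linear expansion, your transfer step does not close. From $\Phi\le v_\vep$ and $u^\vep\le v_\vep+P$, nothing bounds $u^\vep$ from above by $\Phi+P$. So you never obtain test functions touching $u^\vep$ from above and $v_\vep$ from below whose gradients differ by $\nabla P^{\pm}$. A secondary point: Jensen's lemma gives a maximum only along $T$. To get the inequality in a full neighbourhood, you need the classical ABP in each half-ball, applied to the tilted, lowered test function $\overline P=P-\ell-\frac{\eta}{2}$. This uses the very negative $\mcal^{+}(D^2P^{\pm})$ supplied by Lemma~\ref{lem:trick}, as in~\eqref{imp:step}.

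\textbf{The missing idea: replacements.}
\begin{enumerate}
\item Regularize the semicontinuous data $u^\vep$ on $\partial B^{\pm}_\delta(x_0)$ to continuous $u_k\downarrow u^\vep$, with $u_k=u^\vep$ on the interface near $y_0$; this uses the tangential Lipschitz bound.
\item Solve $F^{\pm}(D^2\overline u_k)=f^{\pm}-c_\vep$ in $B^{\pm}_\delta(x_0)$ with boundary data $u_k$.
\item Then $\overline u_k\ge u^\vep$, with equality on the interface near $y_0$, so $\overline u_k$ inherits the transmission subsolution inequality.
\item Being genuine solutions with $C^{1,1}$ interface data at $y_0$, the $\overline u_k$ are piecewise $C^{1,\alpha}$ at $y_0$ by Ma--Wang, uniformly in $k$. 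This passes to $\overline u=\inf_k\overline u_k$ and yields~\eqref{eq:key2}.
\item Do the same from below for $v_\vep$. The difference $\overline u-\overline v$ agrees with $w$ on $\partial B^{\pm}_\delta(x_0)$ and at $y_0$, and satisfies the bulk inequality. So ABP again gives $\overline P\ge \overline u-\overline v$, with contact at $y_0$.
\item The piecewise linear expansions then give~\eqref{eq:key}, and ellipticity of $G$ finishes.
\end{enumerate}
Remark~\ref{rem:mistake} concerns exactly this replacement step, namely Dirichlet problems with merely semicontinuous data. It is not about matching tangential gradients.
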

\begin{proof}
By Lemma~\ref{lem:envelopes} and the closedness property of viscosity subsolutions (Proposition~\ref{prop:closed}), it suffices to prove that for $r \leq \rho - \max\{r_{\vep}, \widetilde{r}_{\vep}\}$, with $r_{\vep} = (2 \vep \|u\|_{L^{\infty}(B_{\rho})})^{1/2}$ and $\widetilde{r}_{\vep} = (2 \vep \|v\|_{L^{\infty}(B_{\rho})})^{1/2}$, we have that
\[
u^{\vep} - v_{\vep} \in 
\underline{S}(
\lambda, \Lambda, \mu, \theta;
f^{+} - c_{\vep} - (\widetilde{f}^{+} + \widetilde{c}_{\vep}),
f^{-} - c_{\vep} - (\widetilde{f}^{-} + \widetilde{c}_{\vep}),
g- c_{\vep} - (\widetilde{g} + \widetilde{c}_{\vep})
\big) 
\quad \text{ in } Q_{r},
\]
where $c_{\vep}= \max\{\omega_{f^{+}}(r_{\vep}), \omega_{f^{-}}(r_{\vep}), \omega_{g}(r_{\vep})\}$ and $\widetilde{c}_{\vep}= \max\{\omega_{\widetilde{f}^{+}}(\widetilde{r}_{\vep}), \omega_{\widetilde{f}^{-}}(\widetilde{r}_{\vep}), \omega_{g}(\widetilde{r}_{\vep})\}$.

\medskip

Let $P$ be a piecewise quadratic function touching $w := u^{\vep} - v_{\vep}$ from above at $x_0 \in Q_r$.
The case $x_0 \notin T_r$ is classical, hence we assume that $x_0 \in T_r$.
By Lemma~\ref{lem:trick}, we may assume that $P$ touches $w$ strictly and that
\begin{equation}
\label{simple:as}
\textstyle
\mcal^{+}(D^2 P^{\pm}; \frac{\lambda}{n}, \Lambda) < 
- \|f^{\pm}\|_{L^{\infty}(Q_{\rho}^{\pm})}
- c_{\vep}
- \|\widetilde{f}^{\pm}\|_{L^{\infty}(Q_{\rho}^{\pm})}
- \widetilde{c}_{\vep}.
\end{equation}
Since it touches strictly, we have $\eta = \eta(\delta) := \inf_{\partial Q_{\delta}(x_0)} (P - w) > 0$ for all small $\delta >0$.

\medskip

Let $\psi = P - w- \frac{\eta}{2}$, and note that $\psi \geq \frac{\eta}{2} >0$ on $\partial B_{\delta}(x_0)$ and $\psi(x_0) = - \frac{\eta}{2} <0$.
By the ABP method, the set of points in $T_{\delta}(x_0)$ where the trace $\psi|_{T}$ can be touched from below by linear polynomials of arbitrarily small slope has positive measure.
Hence, by (ii) in~Lemma~\ref{lem:envelopes}, for each small $\vartheta > 0$, we can find a linear polynomial $\ell_{\vartheta} = a'_{\vartheta} \cdot x' + c_{\vartheta}$, with $|a'_{\vartheta}| < \vartheta$, touching $\psi|_{T}$ from below at a point $y_{0} \in T_{\delta}(x_0)$ where the traces $u^{\vep}|_{T}$ and $v_{\vep}|_{T}$ are both pointwise twice differentiable.
Taking $\vartheta$ smaller if necessary, we may assume that $\overline{P} := P - \ell_{\vartheta} - \frac{\eta}{2} \geq w$ on $\partial B_{\delta}(x_{0})^{+} \cup \partial B_{\delta}(x_0)^{-}$.
By the classical comparison principle $\mcal^{+}(D^2 w; \frac{\lambda}{n}, \Lambda)\geq  f^{\pm} - c_{\vep} - (\widetilde{f}^{\pm}+\widetilde{c}_{\vep})$ in $Q_{r}^{\pm}$, in the viscosity sense, hence by~\eqref{simple:as} and the classical ABP on each half-ball $B^{\pm}_{\delta}(x_0)$, we deduce
\begin{equation}
\label{imp:step}
\overline{P} \geq w \qquad \text{ on } 
\overline{B}_{\delta}(x_0).
\end{equation}

\medskip

The functions $u^{\vep}$ and $v_{\vep}$ need not be piecewise differentiable at $y_{0}$ a priori, hence we need to consider appropriate replacements.

\medskip

First we regularize the boundary datum $u^{\vep}|_{\partial B^{+}_{\delta}(x_0) \cup \partial B^{-}_{\delta}(x_0)}$ in the normal direction by a further sup-convolution.
For $x =(x',x_n) \in \partial B^{+}_{\delta}(x_0) \cup \partial B^{-}_{\delta}(x_0)$ and $k \geq 0$, let
\[
u_k(x) := \sup_{y \in \partial B^{+}_{\delta}(x_0) \cup \partial B^{-}_{\delta}(x_0)}\Big[u^{\vep}(y) - k |x - y|\Big].
\]
By the usual properties, we have that $u_k \in C(\partial B^{+}_{\delta}(x_0) \cup \partial B^{-}_{\delta}(x_0))$ and
\begin{equation}
\label{important1}
u_k \downarrow u^{\vep} \qquad \text{ on } \partial B^{+}_{\delta}(x_0) \cup \partial B^{-}_{\delta}(x_0) \text{ as } k \to \infty.
\end{equation}
Moreover, we claim that $u_k = u^{\vep}$ on the interface, for large $k$.
More precisely, recalling that $u^{\vep}$ is Lipschitz continuous in the tangential variable (see Lemma~\ref{lem:envelopes}~(ii)), for $k \geq [u^{\vep}]_{C^{0,1}(\overline{T}_{\delta}(x_0))}$ and $\delta' \leq \delta - \delta_{k}$, with $\delta_{k} = 2 \|u^{\vep}\|_{L^{\infty}(B_{\delta}(x_0))} k^{-1}$, we have that
\begin{equation}
\label{important2}
u_k = u^{\vep} \qquad \text{ on } T_{\delta'}(x_0).
\end{equation}

\medskip

To show~\eqref{important2}, first note that $u_{k}(x) = u^{\vep}(x_k) - k |x - x_k|$ for some $x_k \in \partial B^{+}_{\delta}(x_0) \cup \partial B^{-}_{\delta}(x_0)$ with $|x-x_k| \leq \delta_k$. 
Hence, for $x \in T_{\delta'}(x_0)$, we have $x_k \in T_{\delta}(x_0)$ and by Lipschitz continuity of $u^{\vep}|_{T}$
\[
u_k(x) 
= u^{\vep}(x_k) - k |x- x_k| 
\leq u^{\vep}(x)- (k -  [u^{\vep}]_{C^{0,1}(\overline{T}_{\delta}(x_0))}) |x- x_k| \leq u^{\vep}(x),
\]
whence~\eqref{important2} follows.

\medskip

Let $\overline{u}_k \in C(\overline{B}_{\delta}(x_0))$ be defined on each half-ball $\overline{B}_{\delta}^{\pm}(x_0)$ as the unique viscosity solution to the boundary-value problem
\begin{equation}
\label{ma:bdy}
\begin{cases}
F^{\pm}(D^2 \overline{u}_k) = f^{\pm}(x) - c_{\vep}& \text{ in } B_{\delta}^{\pm}(x_0),\\
\overline{u}_k = u_k & \text{ on } \partial B_{\delta}^{\pm}(x_0).
\end{cases}
\end{equation}
By standard comparison $\overline{u}_k \geq u^{\vep}$ on $\overline{B}_{\delta}^{\pm}(x_0)$ and since $\overline{u}_k = u^{\vep}$ on $\overline{T}_{\delta'}(x_0)$, it follows that $\overline{u}_k$ are viscosity subsolutions to the transmission condition on $\overline{T}_{\delta'}(x_0)$, with $\delta'$ and $k$ as in~\eqref{important2}.

\medskip

From~\eqref{important1} and~\eqref{important2}, by comparison we deduce that $\overline{u}_k$ is monotone decreasing on $\overline{B}_{\delta}(x_0)$, with a limit $\overline{u} := \inf_{k} \overline{u}_k \in \uppers(\overline{B}_{\delta}(x_0))$ satisfying
\begin{equation}
\label{limprop}
\overline{u} \geq u^{\vep} \quad \text{ on } \overline{B}_\delta(x_0),
\qquad
\qquad \overline{u} = u^{\vep} \quad \text{ on } \partial B_{\delta}^{+}(x_0) \cup \partial B_{\delta}^{-}(x_0).
\end{equation}
Furthermore, since $\overline{u} = \limsup^{\star} u_k$ in $\overline{B}_{\delta}(x_0)$ (see footnote~\ref{foot:half}), and recalling that $\overline{u}_k$ are subsolutions to the transmission problem, the closedness property (Proposition~\ref{prop:closed}) yields
\begin{equation}
\label{bvp1}
\begin{cases}
F^{\pm}(D^2 \overline{u}) \geq f^{\pm}(x) - c_{\vep} & \text{ in } B_{\delta}^{\pm}(x_0),\\
G(\nabla\overline{u}^{+}, \theta \nabla \overline{u}^{-}) \geq g(x) - c_{\vep} & \text{ on } T_{\delta}(x_0),
\end{cases}
\end{equation}
in the viscosity sense.

\medskip

Since $y_0 \in T_{\delta}(x_0)$, choosing $0 < \delta'' < \delta' \leq \delta - \delta_k$ with $T_{\delta''}(y_0) \subset T_{\delta'}(x_0)$, from~\eqref{important2} we see that
\[
\overline{u}_k = u^{\vep} \qquad \text{ on } T_{\delta''}(y_0).
\]
Therefore, recalling that $u^{\vep}|_{T}$ is pointwise second order differentiable at $y_0$, the interface data $\overline{u}_{k}|_{T_{\delta''}(y_0)} = u^{\vep}$ in~\eqref{ma:bdy} is (a fortiori) $C^{1,\alpha}$ at $y_0$, and by Ma and~Wang's boundary regularity result~\cite{MW} applied to each problem in~\eqref{ma:bdy}, we deduce that the replacements $\overline{u}_k$ are piecewise $C^{1,\alpha}$ at $y_{0}$.
More precisely, there are piecewise linear functions $L_k$ such that for all $0 < r \leq r_{\vep}$
\[
|\nabla L_k^{\pm}| \leq C_{\vep} r,
\]
\[
|\overline{u}_k - L_{k}| \leq C_{\vep} r^{1+\alpha} \quad \text{ in } B_r(y_0),
\]
where $r_{\vep} > 0$ and $C_{\vep} >0$ are  independent of $k$.
Passing to the limit as $k \to \infty$, we obtain a piecewise linear function $L_{\overline{u}} := \lim_{k} L_k$ satisfying
\begin{equation}
\label{lprop}
|\overline{u} - L_{\overline{u}}| \leq C_{\vep} r^{1+\alpha} \quad \text{ in } B_r(y_0).
\end{equation}
One can also show (see the proof of Lemma~4.4 in~\cite{DFS}) that the piecewise quadratic barrier
\[
L_{\overline{u}}(x) + C_{\vep} r^{\alpha} |x_n| + C_{\vep} r^{\alpha-1} |x'- y_{0}'|^2,
\]
with $C_{\vep} >0$ possibly larger, touches $\overline{u}$ from above at $y_{0}$ in $B_{r/2}(y_{0})$.
Therefore, by the subsolution property at $y_0$ in~\eqref{bvp1} and letting $r \downarrow 0$, we finally deduce
\begin{equation}
\label{eq:key2}
G(\nabla L_{\overline{u}}^{+}, \theta \nabla L_{\overline{u}}^{-}) \geq g(y_0) - c_{\vep}.
\end{equation}

\medskip
 
Arguing similarly with $v_{\vep}$, we construct a function $\overline{v} \in \lowers(\overline{B}_{\delta}(x_0))$ with
\begin{equation}
\label{limprop2}
\overline{v} \leq v_{\vep} \quad \text{ in } B_{\delta}(x_0),
\qquad \qquad
\overline{v} = v_{\vep} \quad \text{ on } \partial B_{\delta}^{+}(x_0) \cup \partial B_{\delta}^{-}(x_0),
\end{equation}
satisfying
\begin{equation}
\label{bvp2}
\begin{cases}
F^{\pm}(D^2 \overline{v}) \leq \widetilde{f}^{\pm}(x) +\widetilde{c}_{\vep} & \text{ in } B_{\delta}^{\pm}(x_0),\\
G(\nabla \overline{v}^{+}, \theta \nabla v^{-}) \leq \widetilde{g}^{\pm}(x) +\widetilde{c}_{\vep} & \text{ on } T_{\delta}(x_0),\\
\end{cases}
\end{equation}
in the viscosity sense, and a piecewise linear function $L_{\overline{v}}$ such that
\begin{equation}
\label{lprop2}
|\overline{v} - L_{\overline{v}}| \leq C_{\vep} r^{1+\alpha} \quad \text{ in } B_r(y_0),
\end{equation}
\begin{equation}
\label{eq:key22}
G(\nabla L_{\overline{v}}^{+}, \theta \nabla L_{\overline{v}}^{-}) \leq \widetilde{g}(y_0) + \widetilde{c}_{\vep}.
\end{equation}

\medskip

Let $\overline{w} = \overline{u}- \overline{v}$.
Since $\overline{u}$ and $\overline{v}$ satisfy~\eqref{bvp1} and~\eqref{bvp2}, respectively, by classical comparison principle we have that $\mcal^{+}(D^2 \overline{w};\frac{\lambda}{n}, \Lambda) \geq f^{\pm} - c_{\vep} - (\widetilde{f}^{\pm} + \widetilde{c}_{\vep})$ in $B^{\pm}_{\delta}(x_0)$, in the viscosity sense.
Therefore, noticing that $\overline{w} = w$ on $\partial B^{\pm}_{\delta}(x_0)$ by~\eqref{limprop} and~\eqref{limprop2}, from~\eqref{simple:as} and~\eqref{imp:step} the ABP maximum principle yields
\[
\overline{P} \geq \overline{w} \qquad\text{ in } B_{\delta}(x_0).
\]
It follows that $\overline{P}$ touches $\overline{w}$ from above at $y_{0}$, and from the piecewise linear Taylor expansions~\eqref{lprop} and~\eqref{lprop2} we deduce the following relations for the derivatives: for $i < n$,
\begin{equation}
\label{eq:key}
\overline{P}_{x_i}(y_{0}) =  (L_{\overline{u}})_{x_i} - (L_{\overline{v}})_{x_i}, \qquad \overline{P}_{x_n}^{+}(y_{0}) \geq (L_{\overline{u}})_{x_n}^{+} -(L_{\overline{v}})_{x_n}^{+},\qquad \overline{P}_{x_n}^{-}(y_{0}) \leq (L_{\overline{u}})_{x_n}^{-} - (L_{\overline{v}})_{x_n}^{-}.
\end{equation}

\medskip

By~\eqref{eq:key2}, \eqref{eq:key22}, \eqref{eq:key}, and uniform ellipticity, we conclude
\[
\begin{split}
g(y_0) - c_{\vep} &\leq G(\nabla L_{\overline{u}}^{+}, \theta \nabla L_{\overline{u}}^{-}) 
\\
& \quad \leq G(\nabla L_{\overline{v}}^{+} + \nabla \overline{P}^{+}(y_{0}), \theta \nabla L_{\overline{v}}^{-} + \theta \nabla \overline{P}^{-}(y_{0}))
\\
& \qquad \leq G(\nabla L_{\overline{v}}^{+}, \theta \nabla L_{\overline{v}}^{-})
+ \tcal^{+}(\nabla \overline{P}^{+}(y_{0}), \theta \nabla \overline{P}^{-}(y_{0}))
\\
& \qquad \quad \leq 
\widetilde{g}(y_0) + \widetilde{c}_{\vep}
+ \tcal^{+}(\nabla \overline{P}^{+}(y_0), \theta \nabla \overline{P}^{-}(y_0); \lambda, \Lambda, \mu)
\end{split}
\]
whence we see that
\begin{equation}
\label{eq:key3}
g(y_0) - c_{\vep}  - (\widetilde{g}(y_0) + \widetilde{c}_{\vep})
\leq \tcal^{+}(\nabla \overline{P}^{+}(y_{0}), \theta \nabla \overline{P}^{-}(y_{0}); \lambda, \Lambda, \mu).
\end{equation}
Using that 
$y_0 \in T_{\delta}(x_0)$, 
by continuity of $g$ and $\widetilde{g}$, the left-hand side can be bounded below by
\begin{equation}
\label{eq:key4}
g(y_0) - c_{\vep}  - (\widetilde{g}(y_0) + \widetilde{c}_{\vep})
\geq g(x_0) - c_{\vep}  - (\widetilde{g}(x_0) + \widetilde{c}_{\vep}) - \omega_{g}(\delta)
- \omega_{\widetilde{g}}(\delta).
\end{equation}
Recalling that $\nabla \overline{P}^{\pm} = \nabla P^{\pm} - (a'_{\vartheta}, 0)$, with $|a_{\vartheta}'| < \vartheta$, by uniform ellipticity the right-hand side of~\eqref{eq:key3} is controlled by
\begin{equation}
\label{eq:key5}
\begin{split}
&
\tcal^{+}(\nabla \overline{P}^{+}(y_{0}), \theta \nabla \overline{P}^{-}(y_{0}))
\\
& \quad \leq 
\tcal^{+}(\nabla P^{+}(y_{0}), \theta \nabla P^{-}(y_{0})) 
+ 2 \mu \vartheta\\
& \qquad \leq 
\tcal^{+}(\nabla P^{+}(x_{0}), \theta \nabla P^{-}(x_{0})) 
+ (\Lambda + \mu) \left( 
\|D^2 P^{+}\| + \|D^2 P^{-}\|
\right)|y_0 - x_0|
+ 2 \mu \vartheta\\
& \qquad \quad
\leq 
\tcal^{+}(\nabla P^{+}(x_{0}), \theta \nabla P^{-}(x_{0})) 
+ C \delta
+ 2 \mu \vartheta.
\end{split}
\end{equation}
From~\eqref{eq:key3}, combining~\eqref{eq:key4} and~\eqref{eq:key5} and letting $\vartheta \downarrow 0$ and $\delta \downarrow 0$, we deduce the claim.
\end{proof}

\medskip

\begin{rem}
\label{rem:mistake}
For linear Neumann transmission conditions, an analogous result to Theorem~\ref{thm:comp} was established first for continuous sub- and supersolutions in~\cite{DFSfb,DFS}, and subsequently extended to the semicontinuous setting in~\cite{SS}, by the same methods.
The proof in~\cite{SS} uses two auxiliary functions $\overline{u}^{\vep}$ and $\overline{v}_{\vep}$ (c.f. $\overline{u}$ and $\overline{v}$ above) defined as solutions to certain Dirichlet problems with merely semicontinuous boundary data.
In particular, it is not clear whether the boundary datum is achieved, or in what sense, a point which is crucial to the whole argument.
In our proof above, we avoid this issue by further regularizing the boundary datum $u_k$, then taking replacements $\overline{u}_k$, and finally passing to the limit as $k \to \infty$.
\end{rem}

\medskip

As an immediate corollary of Theorem~\ref{thm:comp}, we obtain the uniqueness of viscosity solutions to the Dirichlet problem in (say) balls:

\begin{cor}
[Uniqueness of viscosity solutions]
Let $\{F^{\pm}, G\}$ be constant-coefficient operators.
Let $f^{\pm} \in C(B_R^{\pm}\cup T_R)$, $g \in C(T_R)$, and $\phi \in C(\partial B_R)$.
Let $0 \leq \theta \leq 1$.

The Dirichlet problem 
\[
\begin{cases}
F^{\pm}(D^2 u, x) = f^{\pm}(x) & \text{ in } B^{\pm}_R,\\
G(\nabla u^{+}, \theta \nabla u^{-}, x) = g(x) & \text{ on } T_R,\\
u = \phi & \text{ on } \partial B_R,
\end{cases}
\]
has at most one viscosity solution $u \in C(\overline{B_R})$.
\end{cor}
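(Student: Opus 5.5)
Suppose $u_1, u_2 \in C(\overline{B_R})$ are two viscosity solutions of the Dirichlet problem. The plan is to show $u_1 \leq u_2$ in $B_R$; the reverse inequality follows by exchanging the roles of the two solutions. The argument combines the comparison principle (Theorem~\ref{thm:comp}) with the ABP estimate for subsolutions, which is the version of Theorem~\ref{thm:abp} and its corollary for bounded domains obtained by the sign-invariance remark.

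\textbf{Step 1: the difference is a subsolution with zero data.} Take $u = u_1$, a viscosity subsolution with data $f^{\pm}, g$, and $v = u_2$, a viscosity supersolution with the same data. Apply Theorem~\ref{thm:comp} on cylinders $Q_r(x_0) \subset B_R$ centered at points $x_0 \in T_R$. The theorem is stated in the cylinder $Q_R$, but the operators have constant coefficients, so it applies after translating tangentially and rescaling. This gives
\[
w := u_1 - u_2 \in \underline{S}(\lambda,\Lambda,\mu,\theta; 0, 0, 0)
\]
in each such cylinder. Away from the interface, the classical comparison principle for $F^{\pm}$ gives $\mcal^{+}(D^2 w; \frac{\lambda}{n},\Lambda) \geq 0$ in $B_R^{\pm}$. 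The class $\underline{S}$ is defined through local touching, so these local statements combine to give $w \in \underline{S}(\lambda,\Lambda,\mu,\theta;0,0,0)$ in all of $B_R$.

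\textbf{Step 2: maximum principle.} Now $w \in C(\overline{B_R})$ and $w = \phi - \phi = 0$ on $\partial B_R$. Apply the ABP estimate for subsolutions to $w$ in $B_R$, which is $-w \in \overline{S}(\lambda,\Lambda,\mu,\theta;0,0,0)$ by the sign-change remark. All right-hand-side terms vanish, and the estimate gives
\[
\sup_{B_R} w_{+} \leq \sup_{\partial B_R} w_{+} = 0,
\]
that is, $u_1 \leq u_2$. Exchanging the roles of $u_1$ and $u_2$ yields $u_1 = u_2$.

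\textbf{Expected obstacle.} The only point needing care is the localization in Step 1. Theorem~\ref{thm:comp} is formulated in cylinders with $f^{\pm} \in C(Q_R^{\pm} \cup T_R)$, whereas here the data are given on $B_R^{\pm} \cup T_R$. Since both the viscosity subsolution property and membership in $\underline{S}$ are local at each point of $T_R$, it suffices to cover every interface point by a small cylinder contained in $B_R$ and invoke the theorem there. The data restricted to that cylinder are continuous, so no uniform continuity up to $\partial B_R$ is needed. Everything else is routine.
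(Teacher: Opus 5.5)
Your proposal is correct and follows the paper's proof essentially verbatim. You apply Theorem~\ref{thm:comp} in small cylinders centered on interface points, using translation invariance of the constant-coefficient problem, to get $u_1-u_2\in\underline{S}(\lambda,\Lambda,\mu,\theta;0,0,0)$ in $B_R$, and then conclude with the ABP maximum principle since the boundary values vanish; the only cosmetic difference is that the paper records both inequalities at once as $u-v\in S(\lambda,\Lambda,\mu,\theta;0,0,0)$, whereas you prove one and swap roles.
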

\begin{proof}
If $u$ and $v$ are viscosity solutions, then applying Theorem~\ref{thm:comp} to their differences in cubes centered on the interface, we see that $u- v \in S(\lambda,\Lambda,\mu,\theta;0,0,0)$ in $B_R$.
Since $u = v$ on $\partial B_R$, the ABP maximum principle now yields $u = v$ in $B_R$. 
\end{proof}

\medskip

As another corollary, we see that horizontal difference quotients belong to the solution class, a key result for the regularity theory in our work~\cite{E}:
\begin{cor}
Let $u \in C(Q_R)$ be a viscosity solution to
\begin{equation}
\label{eq:hom}
\begin{cases}
F^{\pm}(D^2 u) = 0 & \text{ in } Q^{\pm}_R,\\
G(\nabla u^{+}, \theta \nabla u^{-}) = 0 & \text{ on } T_R.
\end{cases}
\end{equation}
Then, for all $e =(e', 0) \in \R^{n-1} \times \R$, we have that
\[
u(x + e) - u(x) \in S(\lambda,\Lambda,\mu,\theta; 0,0, 0) \quad \text{ in } Q_{R-|e|}.
\]
\end{cor}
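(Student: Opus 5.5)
The plan is to apply the comparison principle (Theorem~\ref{thm:comp}) to the pair $u(\cdot+e)$ and $u$, and then repeat with the roles reversed.

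\emph{Step 1: translation invariance.} Set $u_e(x) := u(x+e)$ for $x \in Q_{R-|e|}$. Since $e = (e',0)$ is horizontal, the translation $x \mapsto x+e$ maps $Q^{\pm}_{R-|e|}$ into $Q^{\pm}_R$ and $T_{R-|e|}$ into $T_R$. So it preserves the phases and the interface. If a piecewise quadratic (or admissible) function $P$ touches $u_e$ from above or below at $x_0$, then $P(\cdot - e)$ is again admissible and touches $u$ at $x_0+e$. The gradients and Hessians of the two test functions agree at the corresponding points. Because $F^{\pm}$ and $G$ have constant coefficients and the right-hand sides vanish, the same viscosity inequalities hold at $x_0$. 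Hence $u_e \in C(Q_{R-|e|})$ is a viscosity solution of~\eqref{eq:hom} in $Q_{R-|e|}$.

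\emph{Step 2: comparison.} Apply Theorem~\ref{thm:comp} in the cylinder $Q_{R-|e|}$, taking $u_e$ as the subsolution and $u$ as the supersolution, with $f^{\pm} = \widetilde f^{\pm} = 0$ and $g = \widetilde g = 0$. This gives
\[
u_e - u \in \underline{S}(\lambda,\Lambda,\mu,\theta;0,0,0) \quad \text{ in } Q_{R-|e|}.
\]
Now apply the theorem again with $u$ as the subsolution and $u_e$ as the supersolution. This gives $u - u_e \in \underline{S}(\lambda,\Lambda,\mu,\theta;0,0,0)$. By the sign-change remark following the definition of the solution classes, the second conclusion is equivalent to $u_e - u \in \overline{S}(\lambda,\Lambda,\mu,\theta;0,0,0)$. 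Intersecting the two memberships yields $u_e - u \in S(\lambda,\Lambda,\mu,\theta;0,0,0)$, which is the claim.

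\emph{Main obstacle.} The only delicate point is checking that Theorem~\ref{thm:comp} applies on the smaller cylinder $Q_{R-|e|}$. That theorem is stated for a standard cylinder $Q_R$, and $Q_{R-|e|}$ is itself a standard cylinder of radius $R-|e|$, so the hypotheses are met there. Note also that the regularization inside the proof of Theorem~\ref{thm:comp} uses horizontal sup-convolutions. This is consistent with the horizontal translation used in Step~1, so no further adjustment is needed. Continuity of $u$ ensures that both $u_e$ and $u$ lie in the required semicontinuous classes.
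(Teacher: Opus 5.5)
Your proposal is correct and follows the paper's proof. It uses invariance of the constant-coefficient homogeneous problem under horizontal translations, then applies Theorem~\ref{thm:comp} in both orders together with the sign-change remark to obtain membership in both $\underline{S}$ and $\overline{S}$. The paper compresses this into two sentences, but the argument is the same.
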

\begin{proof}
Since~\eqref{eq:hom} is invariant under horizontal translations, it follows that $u(x)$ and its translation $u(x + e)$ are viscosity solutions in the smaller cube $Q_{R-|e|}$.
Applying Theorem~\ref{thm:comp} to their differences now yields the claim.
\end{proof}

\begin{rem}
We can extend some of the above results to convex equations.
Assuming convexity of the operators $F^{\pm}$ and $G$, arguing similarly to the proof of Theorem~\ref{thm:comp}, we can show that the set of viscosity supersolutions to~\eqref{eq:const} is closed under convex combinations.
As a corollary, we deduce that horizontal second order incremental quotients are in the supersolution class.
More precisely,  if $u$ is a viscosity solution to~\eqref{eq:hom}, then for all $e = (e',0) \in \R^{n-1} \times \R$ we have that
\[
u(x+e) + u(x-e) - 2 u(x) \in \overline{S}(\lambda, \Lambda, \mu, \theta; 0,0,0) \quad \text{ in } Q_{R-|e|}.
\]
This result suggests that horizontal second derivatives are bounded below.
However, we do not know whether a two-sided bound exists, or if the full Hessian is bounded up to the interface.
If true, such a result would have important implications for the regularity theory of (non-convex) variable-coefficient problems; see Section 7 in~\cite{E} for details.
\end{rem}


\medskip

\section{Existence of viscosity solutions via Perron's method}
\label{sec:perron}

Let $F^{\pm}$ and $G$ be constant-coefficient operators with ellipticity constants $0 < \lambda \leq \Lambda$ and $\mu \geq 0$.
Let $f^{\pm} \in C(B_1^{\pm} \cup T_1) \cap L^{\infty}(B_1^{\pm})$ and $g \in C(T_1) \cap L^{\infty}(T_1)$.
Given $\phi \in C(\partial B_1)$, we apply Perron's method to construct a viscosity solution $u \in C(\overline{B}_1)$ to the inhomogeneous transmission problem
\begin{equation}
\label{eq:lul}
\begin{cases}
F^{\pm}(D^2 u) = f^{\pm}(x)& \text{ in } B_1^{\pm},\\
G(\nabla u^{+}, \theta \nabla u^{-}) = g(x) & \text{ on } T_1,
\end{cases}
\end{equation}
satisfying the Dirichlet boundary condition
\begin{equation}
\label{eq:bdy}
u = \phi \quad \text{ on } \partial B_1.
\end{equation}
Our solution is defined as the largest viscosity subsolution, namely, 
we will prove the following:

\begin{thm}[Existence]
\label{thm:existence}
Let $u$ be the function given by 
\[
u(x) =
\sup
\left\{
v(x)\colon
v \in \subsol\right\}
\qquad \text{ for } x \in \overline{B}_1,
\]
where $\subsol$ is the set
\[
\subsol = \left\{
v \in \uppers(\overline{B}_1)
\text{ viscosity subsolution to~\eqref{eq:lul}}, \text{ with } v \leq \phi \text{ on } \partial B_1
\right\}.
\]
Then $u \in C(\overline{B}_1)$ is a viscosity solution to~\eqref{eq:lul} and satisfies the boundary condition~\eqref{eq:bdy}.
\end{thm}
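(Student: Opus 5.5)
We follow the standard Perron scheme in the form of Soria Carro--Stinga, using the comparison principle (Theorem~\ref{thm:comp}), the ABP estimate (Theorem~\ref{thm:abp}), and the closedness property (Proposition~\ref{prop:closed}). The argument has four steps.

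\textbf{Step 1: barriers and nonemptiness.} First we show that $\subsol$ is nonempty and that $u$ is bounded, and along the way we construct barriers at each boundary point.

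For $z\in\partial B_1$ and $\vep>0$ we aim for an upper barrier $W_{z,\vep}$ and a lower barrier $w_{z,\vep}$. These should be admissible piecewise $C^{2}$ functions with the following properties:
\begin{itemize}
\item $w_{z,\vep}\le\phi\le W_{z,\vep}$ on $\partial B_1$, and $|W_{z,\vep}(z)-\phi(z)|,|w_{z,\vep}(z)-\phi(z)|\le\vep$;
\item $W_{z,\vep}$ is a classical supersolution of~\eqref{eq:lul}: $F^{\pm}(D^2W)\le f^{\pm}$ in $B_1^{\pm}$ and $G(\nabla W^{+},\theta\nabla W^{-})\le g$ on $T_1$; symmetrically, $w_{z,\vep}$ is a classical subsolution.
\end{itemize}
By ellipticity (the reduction to the extremal operators $\mcal^{\pm}$ and $\tcal^{\pm}$), it suffices to make $\mcal^{-}(D^2W;\frac{\lambda}{n},\Lambda)$ very negative in $B_1^{\pm}$ and $\tcal^{-}(\nabla W^{+},\theta\nabla W^{-})$ very negative on $T_1$. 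Since $W^{+}_{x_n}\ge0\ge W^{-}_{x_n}$ is not needed, we use the formula for $\tcal^{-}$ directly.

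The construction splits into two cases.
\begin{itemize}
\item \emph{$z\notin\overline{T}_1$.} We take a Li--Zhang type barrier, $\phi(z)+\vep+K\big(|x-z|^{-p}$-type radial term$\big)$, modified by adding $-K'|x_n|$. The term $-K'|x_n|$ produces a kink with $W^{+}_{x_n}-W^{-}_{x_n}\approx -2K'$, which makes $\tcal^{-}$ very negative. Near $z$ the interface is far away, so the kink does not affect the boundary behavior.
\item \emph{$z\in\partial T_1$.} This is the main obstacle. Here the kink $-K'|x_n|$ must vanish near $z$ while the radial term still controls $\phi$, and the tangential derivative of a radial function centered at $z$ enters $\tcal^{-}$ with weight $\mu$. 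Our first attempt is a Miller/Lieberman-type barrier $\phi(z)+\vep+K\big(|x-z|^{\beta}$-type term$\big)$ combined with a piecewise function $-K'\,h(x)\,|x_n|$, where $h$ is chosen so that on $T_1$ the normal jump dominates $2\mu|\nabla'W|$. We choose exponents so that $\partial_{x_n}$ on each side grows faster than the tangential gradient as $x\to z$. This mirrors the oblique derivative barriers for $\theta=0$ (Proposition~\ref{prop:deg}) and should remain valid uniformly in $\theta$, because the $-\theta$ side only helps.
\end{itemize}
Constants with $w=-K$ and $W=K$ adjusted by such kinks give global bounds, so $u$ is bounded and $w_{z,\vep}\le u\le W_{z,\vep}$. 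This last inequality follows from comparison (Theorem~\ref{thm:comp} plus the ABP estimate applied to $v-W$ for $v\in\subsol$). Since $\vep$ is arbitrary, $u^*=u_*=\phi$ on $\partial B_1$.

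\textbf{Step 2: $u^*$ is a subsolution.} The upper semicontinuous envelope $u^*$ is a subsolution. We prove this by the usual argument: take a maximizing sequence of subsolutions and apply Proposition~\ref{prop:closed} (half-relaxed limits), noting that the maximum of finitely many subsolutions is again a subsolution. Together with $u^*\le\phi$ on $\partial B_1$ from Step 1, this gives $u^*\in\subsol$, hence $u^*=u$.

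\textbf{Step 3: $u_*$ is a supersolution.} We argue by bump construction. Suppose a piecewise quadratic $P$ touches $u_*$ from below at $x_0$ and the supersolution inequality fails. If $x_0\in T_1$, we use Lemma~\ref{lem:trick} to arrange strict inequalities in both the bulk and transmission conditions near $x_0$. Then $\max\{u,P+\delta-\eta|x-x_0|^2\}$ is a subsolution that exceeds $u$ somewhere near $x_0$, contradicting maximality. If $x_0$ is in the interior of a phase, the classical argument applies.

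\textbf{Step 4: conclusion.} Comparison (Theorem~\ref{thm:comp}) applied on small cylinders gives $u^*-u_*\in\underline S(\lambda,\Lambda,\mu,\theta;0,0,0)$. Since $u^*=u_*$ on $\partial B_1$, the ABP estimate (Theorem~\ref{thm:abp}, subsolution version) yields $u^*\le u_*$. Hence $u$ is continuous on $\overline B_1$ and is a viscosity solution with $u=\phi$ on $\partial B_1$.
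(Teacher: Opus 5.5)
\textbf{Overall.} Your Steps 2--4 follow the paper's proof. Deriving Step 2 from Proposition~\ref{prop:closed} via finite maxima, instead of the direct touching argument of Lemma~\ref{lem:sub}, is fine once you extract a countable subfamily of $\subsol$ whose supremum has the same u.s.c.\ envelope as $u$. The gap is Step 1. The boundary barriers are exactly what gives $u_{*}=u^{*}=\phi$ on $\partial B_1$, and none of your three ingredients works as stated.

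\textbf{Wrong envelopes.} Since $F^{\pm}(M)\le F^{\pm}(0)+\mcal^{+}(M;\frac{\lambda}{n},\Lambda)$ and $G(\xi^{+},\xi^{-})\le G(0,0)+\tcal^{+}(\xi^{+},\xi^{-})$, a supersolution needs $\mcal^{+}(D^2W)$ and $\tcal^{+}(\nabla W^{+},\theta\nabla W^{-})$ to be very negative. Negativity of $\mcal^{-}$ and $\tcal^{-}$ is not sufficient.

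\textbf{The case $z\notin\overline{T}_1$.} The term $-K'|x_n|$ is global, so it does affect the boundary behavior at $z$. It shifts $W(z)$ by $-K'|z_n|$. After compensating with a constant, it is negative to first order along $\partial B_1$ near $z$, whereas an exterior-ball barrier $Kb$ vanishes only to second order there. In general $K'$ must grow with $K$: on $T_1$ you must absorb $\mu K|\nabla' b|$, which $\lambda K|b_{x_n}|$ does not control when $|z_n|$ is small. Hence $W\ge\phi$ fails on $\partial B_1$ near $z$ once $K$ is large. A sum cannot decouple the interface from the boundary point. The paper instead takes a \emph{minimum} of two supersolutions, $\vep^{-1}\min\{4\kappa^{-2}v,w\}$ with $v=1-(x_0\cdot x)^2$ and $w=3-(x_n)_+-x_n^2$. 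This equals the kinked $w$ near $T_1$ (where $4\kappa^{-2}v\ge 4>3=w$) and the smooth $4\kappa^{-2}v$ near $x_0$ (where $w\ge 1$).

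\textbf{The case $z\in\partial T_1$ (the main gap).} Here you only offer a ``first attempt'', and its mechanism --- making $\partial_{x_n}W$ grow faster than $\nabla'W$ as $x\to z$ --- is not viable. Since the barrier vanishes only at $z$, its positive part must dominate the negative kink near $z$; otherwise $W<0$ just off $T_1$. This ties the normal jump to the size of $W$, hence to its tangential gradient on $T_1$, so the two must be of the same order. Moreover, natural kink terms such as $-|x-z|^{\gamma-1}|x_n|$ with $0<\gamma<1$ have positive Laplacian in each phase. A pure radial power $|x-z|^{\beta}$ with $0<\beta<1$ has $\mcal^{+}>0$. Neither is a supersolution.

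The missing idea is a single homogeneous Miller-type function with a kinked angular profile. Place $z=0$ and $B_1=B_1(e_1)$, and use polar coordinates $x_1=r\cos\phi$, $x_n=r\sin\phi$. The paper takes $w=r^{\alpha}\zeta(\phi)$ with $\zeta=2-e^{\beta(|\phi|-\pi/2)}$ and $\beta=2\Lambda/\lambda$. On $T_1$ one has $w^{\pm}_{x_n}=\mp\beta e^{-\beta\pi/2}r^{\alpha-1}$ and $w_{x_1}=\alpha\zeta(0)r^{\alpha-1}$, which have the same homogeneity. Obliqueness is therefore beaten at the level of constants, by taking $\alpha$ small so that $2\mu\alpha\zeta(0)<\lambda\beta e^{-\beta\pi/2}$. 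Taking $\beta$ large makes $\zeta''+\alpha\zeta$ negative enough to absorb the cross term $(\Lambda-\lambda)(1-\alpha)|\zeta'|$ in Miller's representation. This gives $\mcal^{+}(D^2w)\le-c_0r^{\alpha-2}$ and $\tcal^{+}(\nabla w^{+},\theta\nabla w^{-})\le-c_0r^{\alpha-1}$, uniformly in $\theta$. Adding $\frac{c_1}{2}|x''|^2$ handles the remaining variables. Without such a barrier, the boundary condition is not established at the points of $\partial T_1$.
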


\medskip

Theorem~\ref{thm:existence} will follow from the discussion and three Lemmata~\ref{lem:barrier}, \ref{lem:sub}, and \ref{lem:sup} below.

\medskip

Since the function $u$ need not be continuous (nor semicontinuous) a priori, we must take its \lsc and \usc envelopes $u_{\star}$ and $u^{\star}$, respectively, given by
\[
u_{\star}(x) = \lim_{\rho \downarrow 0} \inf_{B_{\rho}(x) \cap \overline{B}_1} u 
 \qquad \text{ and } \qquad
u^{\star}(x) = \lim_{\rho \downarrow 0} \sup_{B_{\rho}(x) \cap \overline{B}_1} u
\qquad
\text{ for } x \in \overline{B}_1.
\]
Note in particular that
\[
u_{\star} \leq u \leq u^{\star} \quad \text{ on } \overline{B}_1.
\]

\medskip

To show that the boundary condition~\eqref{eq:bdy} is attained, we first construct barriers vanishing at given points on the sphere $\partial B_1$.
As explained in the Introduction, the most delicate case corresponds to when such a point meets the interface.
Here, we use a piecewise smooth barrier inspired by Lieberman's~\cite{L} barrier for the oblique derivative problem based on earlier work of Miller~\cite{Miller}.
For points away from the boundary, we modify a simple barrier of Li-Zhang~\cite{LZ} for the oblique derivative problem.

\begin{lem}
\label{lem:barrier}
For each $x_0 \in \partial B_1$, there is a viscosity supersolution $h \in C(\overline{B}_1)$ to~\eqref{eq:lul} satisfying
\[
h(x_0) = 0
 \qquad \text{ and } 
 \qquad h(x) > 0 \quad \text{ for } x \in \overline{B}_1 \setminus \{x_0\}.
\]
\end{lem}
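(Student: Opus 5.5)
Since $f^{\pm}$ and $g$ are bounded, it suffices to build $h$ as an admissible (piecewise $C^2$, continuous) function with
\[
\mcal^{-}(D^2 h^{\pm}; \tfrac{\lambda}{n}, \Lambda) \geq \|f^{\pm}\|_{L^\infty} + |F^{\pm}(0)| \ \text{ in } B_1^{\pm},
\qquad
\tcal^{-}(\nabla h^{+}, \theta \nabla h^{-}) \geq \|g\|_{L^\infty} + |G(0,0)| \ \text{ on } T_1,
\]
uniformly in $\theta \in [0,1]$. By uniform ellipticity, $F^{\pm}(D^2 h) \geq F^{\pm}(0) + \mcal^{-}(D^2 h)$ and $G(\nabla h^+, \theta\nabla h^-) \geq G(0,0) + \tcal^{-}(\nabla h^{+}, \theta \nabla h^{-})$. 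Admissible functions touching $h$ from below then give the reverse of the supersolution inequality. (Strictly, one should use the sign conventions in the definition, replacing $h$ by a large multiple if needed.) At a nonsmooth interior point of $h$ no smooth function can touch from below if $h$ has a concave kink there, so we only need $h$ to have convex-type kinks, or be smooth, away from $T_1$. Positivity and $h(x_0)=0$ come from the shape of $h$.

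\textbf{Case 1: $x_0 \in \partial B_1$ with $x_{0,n} \neq 0$.} Say $x_{0,n}>0$. The classical barrier $h_1(x) = A\big(\rho^{-p} - |x-y|^{-p}\big)$, with $y = (1+\rho)x_0$ exterior and $p$ large, satisfies $\mcal^{-}(D^2 h_1) \geq c>0$ on $B_1$. It vanishes only at $x_0$. To handle the interface, add $K\big((x_n)_- + M (x_n)_+\big)$, i.e.\ a convex piecewise linear term. Its contribution to $\tcal^{-}$ is $\lambda K M - \Lambda\theta K \geq K(\lambda M - \Lambda) > 0$ when $M = 2\Lambda/\lambda$, by~\eqref{tmin}. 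This dominates the bounded gradient of $h_1$, which enters with constant $2\mu|\nabla' h_1| + 2\Lambda|\nabla h_1|$.

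This added term is not zero at $x_0$, so subtract the affine function $K M x_{0,n}$ ... Instead, use $K\big(M x_n - x_{0,n}M\big)$ only where positive? More simply: the added term is convex, so subtract its tangent plane at $x_0$, namely $K M(x_n - x_{0,n}) + K M x_{0,n}$. The result is a nonnegative convex function vanishing on $\{x_n \geq 0\}$... Since that loses strictness, rely on $h_1$ for strict positivity: $h = h_1 + K\big((x_n)_- + M(x_n)_+\big) - KM x_n$. This equals $h_1 + K(1+M)(x_n)_- \ge h_1$, vanishes at $x_0$, and is positive elsewhere. Affine terms do not change $\mcal^-$. On $T$ the normal jump is still $+K(1+M)$ in the lower derivative direction; more precisely, $\partial_n h^+ = \partial_n h_1$ and $\partial_n h^- = \partial_n h_1 - K(1+M)$. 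Then $\tcal^{-}$ gains $+\Lambda\theta K(1+M)$, which is not useful when $\theta = 0$.

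So instead choose the sign of the tangent-plane subtraction according to the side of $x_0$. For $x_{0,n}>0$, use $h_1 + K(M+1)(x_n)_+ - $ nothing? That is positive away from $x_0$, but not zero at $x_0$. Hence I would additionally localize: take $h_1$ with $A$ huge, and use the Li--Zhang idea. On the side containing $x_0$, the function $h = h_1 + K x_n + K(M+1)(x_n)_-\cdot 0$... I expect the honest fix is to make $h_1$ itself have $\partial_n h_1^{+} \gg |\nabla' h_1|$ on $T$. This holds because $y$ lies above the interface, so $\partial_n h_1 = A p |x-y|^{-p-2}(y_n - x_n)\,\cdot$ has a definite sign: $\partial_n h_1 < 0$ on $T$. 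Therefore I would reflect and choose $y$ on the opposite side... This case needs care, and I would settle it by taking the Li--Zhang barrier $h = h_1 + K x_n^{\pm}$-type correction tuned so that it vanishes on the hemisphere through $x_0$.

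\textbf{Case 2: $x_0 \in \partial B_1 \cap \{x_n=0\}$.} This is the main obstacle. Following Lieberman--Miller, I would use a barrier of the form $h = |x-x_0|^{\beta}$-type with an angular factor depending on $x_n$. Concretely, take $h = \psi(d)\,\big(1 + \kappa\, \sigma(x_n/|x-x_0|)\big)$, where $d = |x - x_0|$, $\psi$ is concave increasing with $\psi(0)=0$ (for example $d^{\beta}$ plus a uniformly convex correction $-d^2$-type coming from the ball curvature), and $\sigma$ is piecewise linear with slopes $2\Lambda/\lambda$ above and $1$ below. Then $\tcal^{-} \geq c\,\psi(d)/d - 2\mu|\nabla' h|$ on $T$. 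The tangential term is bounded by $C\psi'(d)$, so I need $\kappa$ large relative to $\mu$ but small enough that the angular factor does not destroy $\mcal^{-} > 0$ in the bulk. This is achieved by taking $\beta$ small, so that the radial concavity $\psi''$ is weak, and using the curvature term $\Lambda$-Pucci positivity from an added $A(\rho^{-p} - |x-y|^{-p})$ with $y$ exterior on the line through $x_0$. Checking that these constants are compatible, uniformly in $\theta$ via~\eqref{tmin}, is where I expect the real work lies.
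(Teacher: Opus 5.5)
Your two-case plan matches the paper's, but neither case is carried out, and the signs are reversed throughout. In this paper $F^{\pm}$ is increasing in $D^2u$, $G$ is increasing in $\xi_n^{+}$ and decreasing in $\xi_n^{-}$, and a supersolution satisfies $F^{\pm}\le f^{\pm}$, $G\le g$. What is needed is \eqref{an1}--\eqref{an2}: $\mcal^{+}(D^2h;\frac{\lambda}{n},\Lambda)\le -C$ in $B_1^{\pm}$ and $\tcal^{+}(\nabla h^{+},\theta\nabla h^{-})\le -C$ on $T_1$. So $h$ must be Pucci-concave with a \emph{roof-shaped} kink, $\partial_n h^{+}<0<\partial_n h^{-}$ on $T$. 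Your conditions $\mcal^{-}\ge C$, $\tcal^{-}\ge C$ describe a strict subsolution, and no positive rescaling changes that. Your conclusion about kinks is also reversed. As you say, concave kinks cannot be touched from below, so those are the harmless ones. Your convex term $K((x_n)_-+M(x_n)_+)$, by contrast, contributes $\tcal^{+}=\Lambda KM+\Lambda\theta K>0$. (Also, $h_1=A(\rho^{-p}-|x-y|^{-p})$ satisfies $\mcal^{+}(D^2h_1)\le -c$, not $\mcal^{-}(D^2h_1)\ge c$. And \eqref{tmin} does not apply to $\partial_n[K(x_n)_-]=-K<0$.)

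Beyond the signs, Case~1 is left open. No $h$ that is $C^1$ across $T$ can work, since $\tcal^{+}(\xi,\xi)\ge 0$ when $\theta=1$. For $\theta=0$ only $\partial_n h^{+}$ enters. So a roof is unavoidable, and, as you observed, adding one globally destroys $h(x_0)=0$ or positivity. The missing idea is to localize with a minimum of two supersolutions. Let $\kappa=|(x_0)_n|>0$ and set $h=\vep^{-1}\min\{8\kappa^{-2}v,\,w\}$. Here $w=3-(x_n)_+-x_n^2$ satisfies $w\ge1$, $w=3$ on $T$, and $\tcal^{+}(\nabla w^{+},\theta\nabla w^{-})=-\lambda$. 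The function $v$ is a concave quadratic vanishing on $\overline{B}_1$ only at $x_0$ with $v\ge\kappa^2/2$ on $T_1$, e.g.\ $v=1-x_0\cdot x+\frac12(1-|x|^2)$. Then $h=\vep^{-1}w$ near $\overline{T}_1$ and $h=8\vep^{-1}\kappa^{-2}v$ near $x_0$. (The paper's $v=1-(x_0\cdot x)^2$ also vanishes at $-x_0$, so it needs a modification of this kind.)

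Case~2 is where the content of the lemma lies, and your ansatz fails in the bulk however the constants are tuned. Write $h=\psi(d)g(\omega)$ with $d=|x-x_0|$ and $\omega=(x-x_0)/d$. The radial factor alone is Pucci-\emph{sub}harmonic: $D^2\psi(d)$ has the eigenvalue $\psi'(d)/d>0$ in $n-1$ directions. For instance, $\mcal^{+}(D^2d^{\beta};\frac{\lambda}{n},\Lambda)=\bigl(\Lambda(n-1)-\frac{\lambda}{n}(1-\beta)\bigr)\beta d^{\beta-2}>0$. So the angular factor must supply strong concavity, enough to absorb also the mixed radial--angular terms of $\mcal^{+}$. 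A profile piecewise linear in $x_n/d$ has essentially none near $T$. For $\psi=d^\beta$, away from the kink $\Delta_S g=O(\kappa|\omega_n|)$, with $\Delta_S$ the spherical Laplacian. Hence $\Delta h=d^{\beta-2}\bigl(\beta(\beta+n-2)g+O(\kappa|\omega_n|)\bigr)>0$ close to $T$. Bounded-Hessian corrections (your $-d^2$ term, the exterior-ball barrier) cannot help, since these bad terms are of order $d^{\beta-2}$. Your scaling on $T$ (jump $\sim\psi(d)/d$ against tangential gradient $\sim\psi'(d)$) is the right one.

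What is missing is Miller's barrier, as in the paper. Take $x_0=0$, $B_1=B_1(e_1)$, and polar coordinates $(r,\phi)$ in the $(x_1,x_n)$-plane. Then the ball lies in the wedge $|\phi|<\pi/2$ and $T=\{\phi=0\}$. Set $w=r^{\alpha}\zeta(\phi)$ with $\zeta=2-e^{\beta(|\phi|-\pi/2)}$. Since $\zeta''=-\beta|\zeta'|$ for $\phi\neq0$, taking $\beta$ a large multiple of $\Lambda/\lambda$ lets the angular concavity dominate the cross term, which is of order $(\Lambda-\lambda)|\zeta'|$. A small $\alpha$ gives $\zeta''+\alpha\zeta<0$ and makes the tangential term $2\mu\alpha\zeta(0)$ smaller than $\lambda|\zeta'(0^{+})|$. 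The roof $\zeta'(0^{\pm})=\mp\beta e^{-\beta\pi/2}$ then yields $\tcal^{+}(\nabla w^{+},\theta\nabla w^{-})=\lambda(\partial_nw^{+}-\theta\,\partial_nw^{-})+\mu(1+\theta)|\partial_{x_1}w|\le\lambda\,\partial_nw^{+}+2\mu|\partial_{x_1}w|$, uniformly in $\theta\in[0,1]$. The remaining variables are handled by adding $\frac12c_1|x''|^2$ with $c_1$ small.
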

\begin{proof}
Let $x_0 = (x_0', (x_0)_n) \in \partial B_1 \subset \R^{n-1} \times \R$.
We distinguish two cases according to the relative position of $x_0$ with respect to the interface:

\medskip

\noindent
\textbf{Case 1.} $|(x_0)_n| = \kappa > 0$.

\medskip

Let $v$ and $w$ be the quadratic and piecewise quadratic functions given by
\[
v(x) = 1- (x_0 \cdot x)^2
\qquad \text{ and } 
\qquad w(x) = 3 - (x_n)_{+} - x_n^2,
\]
respectively.
Note that 
\[
v(x_0) = 0,  
\qquad\qquad v > 0 \, \text{ on } \overline{B}_1 \setminus \{x_0\},  
\qquad\qquad v \geq \kappa^2 \, \text{ on } T_1,
\]
\[
w \geq 1 \, \text{ on } \overline{B}_1, \qquad\qquad w \geq 3 \, \text{ on } T_1,
\]
as well as
\[
\begin{cases}
\textstyle
\mcal^{+}(D^2 v; \frac{\lambda}{n}, \Lambda) 
= \mcal^{+}(D^2 w; \frac{\lambda}{n}, \Lambda)
= - 2 \lambda n^{-1} & \text{ in } B_1^{\pm},\\
\tcal^{+}(\nabla w^{+}, \theta \nabla w^{-}; \lambda, \Lambda, \mu)
= - \lambda & \text{ on } T_1.
\end{cases}
\]
Letting
\[
h := \vep^{-1}\min\big\{4 \kappa^{-2} v, w\big\}
\]
and choosing $\vep >0$ sufficiently small, we see from the above that
\begin{equation}
\label{an1}
\textstyle
\mcal^{+}(D^2 h; \frac{\lambda}{n}, \Lambda) 
< - F^{\pm}(0) - \|f^{\pm}\|_{L^{\infty}(B_1^{\pm})} \qquad \text{ in } B_1^{\pm}
\end{equation}
in the viscosity sense, and
\begin{equation}
\label{an2}
\tcal^{+}(\nabla h^{+}, \theta \nabla h^{-}; \lambda, \Lambda, \mu) 
< - G(0,0) - \|g\|_{L^{\infty}(T_1)} \qquad \text{ on } T_1.
\end{equation}
It follows that 
$h$ is a viscosity supersolution to~\eqref{eq:lul} and satisfies the claim.

\medskip

\noindent
\textbf{Case 2.} $(x_0)_n = 0$.

\medskip

For simplicity, throughout the proof, we write $T$ to denote
the ``global'' interface
\[
T = \{x_n = 0\}.
\]
By translation and rotation, we may assume that $B_1 = B_1(e_1)$, $T_1 = T_1(e_1) = T \cap B_1(e_1)$, and $x_0 = 0 \in \partial B_1(e_1)$.
Note here that $e_1 = (1, 0, \ldots, 0)$.

\medskip

We introduce polar coordinates in the $(x_1, x_n)$-plane:
\[
x_1 = r \cos \phi, \qquad x_n = r \sin \phi, \qquad \text{ with } \qquad - \pi/2 \leq \phi \leq \pi/2.
\]
Note that the tangent plane to the convex set $B_1(e_1)$ at $x_0 = 0 \in \partial B_1(e_1)$ is 
$\{x_1 = 0\} \subset \R^{n}$,
hence we always have $x_1 \neq 0$ and therefore
\[
r > 0 \qquad \text{ in } B_1(e_1).
\]
We also note that
\[
T = \{x_n = 0\} = \{\phi = 0\}.
\]

\medskip

Let $w = w(x_1, x_n)$ be the function of two variables given in polar coordinates by
\[
w = r^{\alpha} \zeta(\phi),
\]
where $\zeta$ is the piecewise smooth function
\[
\zeta(\phi)=  2 - e^{\beta (|\phi| - \frac{\pi}{2})}
\]
and $\alpha$, $\beta$ are universal constants given by
\[
\beta = 2 \frac{\Lambda}{\lambda} 
\qquad \text{ and } \qquad 
\alpha = \frac{1}{2} \cdot \frac{\beta}{2 e^{\beta \frac{\pi}{2}}-1} \min\left\{\frac{\lambda}{2\mu}, \beta \right\}.
\]
We point out that $0 < \alpha < 1$ as well as $1 \leq \zeta(\phi) \leq 2$ for $- \frac{\pi}{2} \leq \phi\leq \frac{\pi}{2}$, whence it follows that
\[
r^{\alpha} \leq w \leq 2 r^{\alpha} \quad \text{ on } \overline{B}_1(e_1).
\]
Observe also that $w \in C(\overline{B}_1(e_1))$ is piecewise $C^2$, i.e., its restrictions are $C^2$ up to the interface
\[
w^{\pm} \in C^2(B_1^{\pm}(e_1) \cup T_1(e_1)),
\]
and a fortiori an admissible function.

\medskip

We claim that $w$ is a strict supersolution, in the sense that
\begin{equation}
\label{yor2}
\textstyle
\mcal^{+}(D^2 w; \frac{\lambda}{n}, \Lambda)
\leq 
- c_0 r^{\alpha-2} \quad \text{ in } B_1(e_1)^{\pm},
\end{equation}
\begin{equation}
\label{yor1}
\tcal^{+}(\nabla w^{+}, \theta \nabla w^{-}; \lambda, \Lambda, \mu) 
\leq - c_0 r^{\alpha-1} \quad \text{ on } T_1(e_1),
\end{equation}
where $c_0 > 0$ is the universal constant
\begin{equation}
\label{cos2}
c_0 = \frac{\lambda \beta }{2 e^{\beta\frac{\pi}{2}}}.
\end{equation}

\medskip

Let us check~\eqref{yor2} first.
By Miller's polar representation~\cite{Miller}, every non-divergence operator
$L w = \tr{A(x)D^2 w}$
acting on $w(x_1, x_n)$, with $A(x) \in \ellmat$, can be written as
\[
L w = r^{\alpha-2} \left[a(x) \alpha(\alpha-1) \zeta + 2 b(x) (\alpha-1) \zeta' + c(x) (\zeta'' + \alpha \zeta) \right],
\]
for some $2\times2$ elliptic matrix 
$\left(\begin{smallmatrix}
a(x) & b(x) \\ c(x) & d(x)
\end{smallmatrix}\right) \in \ellmat$.
Observing that
$0 < \alpha < 1$, $\zeta' < 0$, and $\zeta'' + \alpha \zeta < 0$,
a computation shows that
\[
\begin{split}
\mcal^{+}(D^2 w) &= r^{\alpha-2} \left[ -\lambda \alpha (1-\alpha) \zeta - (\Lambda-\lambda) (1-\alpha) \zeta' + \lambda (\zeta'' + \alpha \zeta)  \right]\\
&= r^{\alpha-2} e^{\beta(|\phi| - \frac{\pi}{2})}  \left[ \lambda \big(- \beta^2 + \alpha^2 (2 e^{\beta(\frac{\pi}{2}-|\phi|)}- 1)\big) + \beta (\Lambda-\lambda) (1-\alpha) \right]\\
& \qquad \leq r^{\alpha-2} \lambda e^{\beta(|\phi| - \frac{\pi}{2})}  \left[- \beta^2 + \alpha^2 (2 e^{\beta\frac{\pi}{2}}- 1) + \beta \frac{\Lambda-\lambda}{\lambda} \right]\\
&\qquad \qquad \leq r^{\alpha-2} \lambda \beta e^{\beta(|\phi| - \frac{\pi}{2})}  \left[ - \frac{1}{2}\beta + \frac{\Lambda}{\lambda}-1 \right]\\
&\qquad \qquad \qquad \leq  - r^{\alpha-2} \lambda \beta e^{- \beta\frac{\pi}{2}} \leq - c_0 r^{\alpha-2},
\end{split}
\]
where in the last line we have used the definition of $\beta$ and $c_0$ in~\eqref{cos2}.

\medskip

Next, we verify~\eqref{yor1}.
A direct computation shows that the gradient of $w$ away from the interface $T = \{\phi = 0\}$ is given by
\[
\nabla w = 
r^{\alpha-1} \Big[
\left( \alpha \zeta(\phi) \cos \phi - \zeta'(\phi) \sin \phi\right) e_1 +
\left( \alpha \zeta(\phi) \sin \phi + \zeta'(\phi) \cos \phi\right) e_n
\Big],
\]
and hence, restricted to the interface $T_1(e_1)$ from each side
\[
\nabla w^{\pm} 
= 
r^{\alpha-1} \Big[
 \alpha \zeta(0) e_1 
+[\zeta^{\pm}]'(0) e_n
\Big] \quad \text{ on } T_1(e_1).
\]
Since $[\zeta^{+}]'(0) < 0$ and $[\zeta^{-}]'(0) > 0$, we see that $w_{x_n}^{+} < 0$ and $w_{x_n}^{-} > 0$ on $T$, and hence by~\eqref{tmin}
\[
\begin{split}
\tcal^{+}(\nabla w^{+}, \theta \nabla w^{-}; \lambda, \Lambda, \mu)
&
= \lambda (w_{x_n}^{+} - \theta w_{x_n}^{-}) + \mu (1+ \theta) |w_{x_1}|\\ &  \qquad \leq \lambda w_{x_n}^{+} + 2 \mu |w_{x_1}|
= r^{\alpha-1} \left(\lambda [\zeta^{+}]'(0)  + 2\mu \alpha \zeta(0)\right)
\end{split}
\]
whence, recalling~\eqref{cos2}, we deduce
\[
\begin{split}
\tcal^{+}(\nabla w^{+}, \theta \nabla w^{-}) & \leq r^{\alpha-1} \left(
- \lambda \beta e^{-\beta\frac{\pi}{2}}  + 2\mu \alpha (2- e^{-\beta\frac{\pi}{2}})
\right) 
\leq - c_0 r^{\alpha-1} \quad \text{ on } 
T_1(e_1),
\end{split}
\]
which was the claim.

\medskip

Finally, we extend $w$ in the remaining variables $x = (x_1, x'', x_n) \in B_1(e_1) \subset \R \times \R^{n-2} \times \R$ by
\begin{equation}
\label{hfunction}
h(x) := w(x_1, x_n) + \frac{1}{2} c_1 |x''|^2, 
\end{equation}
where $c_1 > 0$ is the universal constant
\[
c_1 := 
2^{\alpha-2}
\frac{c_0}{2} \min\left\{\frac{1}{\Lambda n}, \frac{1}{\mu}\right\}.
\]
Since $r \leq 2$ in $B_1(e_1)$, from~\eqref{yor2} we obtain
\[
\begin{split}
\textstyle
\mcal^{+}(D^2 h; \frac{\lambda}{n}, \Lambda) &\leq \mcal^{+}(D^2 w) + c_1 \textstyle\mcal^{+}(D^2 \frac{1}{2} |x''|^2)\\
& \quad \leq - 2^{\alpha-2} c_0 + \Lambda(n-2) c_1 \\
& \qquad \leq - 2^{\alpha-3} c_0
\qquad  \text{ in } B_1(e_1)^{\pm},
\end{split}
\]
and from~\eqref{yor1} we deduce
\[
\begin{split}
\tcal^{+}(\nabla h^{+}, \theta \nabla h^{-}; \lambda, \Lambda, \mu) 
&\leq \tcal^{+}(\nabla w^{+}, \theta \nabla w^{-}) + c_1 \textstyle \tcal^{+}(\nabla \frac{1}{2} |x''|^2, \theta \nabla \frac{1}{2} |x''|^2)\\
& \quad \leq - 2^{\alpha-1}c_0 + 2\mu c_1\\
& \qquad \leq - 2^{\alpha-2} c_0 
\qquad \text{ on } T_1(e_1),
\end{split}
\]

In particular, the last two inequalities show that the function $h$ defined in~\eqref{hfunction} satisfies
\begin{equation}
\label{hfunction2}
\begin{cases}
\mcal^{+}(D^2 h; \frac{\lambda}{n}, \Lambda) \leq - c_2 & \text{ in } B_1(e_1)^{\pm},\\
\tcal^{+}(\nabla h^{+}, \theta \nabla h^{-}; \lambda, \Lambda, \mu) \leq - c_2 & \text{ on } T_1(e_1),\\
\end{cases}
\end{equation}
with $c_2 = 2^{\alpha-2} c_0 > 0$.
Replacing $h$ by $\vep^{-1} h$, from~\eqref{hfunction2} we see that we may take $\vep > 0$ sufficiently small such that \eqref{an1} and~\eqref{an2} hold, whence the claim follows.
\end{proof}

\medskip

For $x_0 \in \partial B_1$ and $\vep >0$, we define the lower barrier
\[
\underline{\varphi}(x) = \phi(x_0) - \vep - C_{\vep} h(x),
\]
where $h$ is given by Lemma~\ref{lem:barrier} and $C_{\vep} \geq 1$ is chosen large such that $\underline{\varphi} \leq \phi$ on $\partial B_1$.
Since $\underline{\varphi} \in \subsol$ is a continuous function, by definition of $u$  and \lsc envelope, we have that
\[
\underline{\varphi} \leq u_{\star} \leq u \leq u^{\star} \quad \text{ on } \overline{B}_1.
\]
In particular, we see that $\phi(x_0) \leq u_{\star}(x_0) + \vep$ for all $x_0 \in \partial B_1$ and $\vep > 0$, whence it follows that
\begin{equation}
\label{eq:one3}
\phi \leq u_{\star} \leq u
\qquad \text{ on } \partial B_1.
\end{equation}

\medskip

Similarly, the upper barrier
\[
\overline{\varphi}(x) = \phi(x_0) + \vep + C_{\vep} h(x)
\]
is a continuous supersolution satisfying $\overline{\varphi} \geq \phi$ on $\partial B_1$.
By comparison principle (Theorem~\ref{thm:comp}), for all $v \in \subsol$, we have that $v - \overline{\varphi} \in \underline{S}(\lambda, \Lambda, \mu, \theta; 0,0,0)$ in $B_1$ and hence the ABP yields $v \leq \overline{\varphi}$ in $B_1$.
Recalling the definition of $u$ and \usc envelope, we deduce $u \leq u^{\star} \leq \overline{\varphi}$ in $B_1$ and arguing as above, we conclude
\begin{equation}
\label{eq:one5}
u \leq u^{\star} \leq \phi \qquad \text{ on } \partial B_1.
\end{equation}
Combining~\eqref{eq:one3} and~\eqref{eq:one5}, we see that the boundary condition~\eqref{eq:bdy} is attained with
\begin{equation}
\label{bdy:thing}
u_{\star} = u = u^{\star} = \phi \qquad \text{ on } \partial B_1.
\end{equation}

\medskip

Next, we show that $u^{\star}$ is a subsolution:

\begin{lem}
\label{lem:sub}
The \usc envelope $u^{\star}$ is a viscosity subsolution to~\eqref{eq:lul}
\end{lem}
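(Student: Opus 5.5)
The plan is the standard Perron argument: $u^{\star}$ is, up to semicontinuous regularization, a supremum of subsolutions, so it should inherit the subsolution property. The natural route is to realize $u^{\star}$ as a half-relaxed limit of members of $\subsol$ and invoke the closedness property (Proposition~\ref{prop:closed}) with constant sequences $F_k^{\pm}=F^{\pm}$, $G_k=G$, $f_k^{\pm}=f^{\pm}$, $g_k=g$, $\theta_k=\theta$. The only interior points that matter are $x_0\in B_1$. Those in $B_1^{\pm}$ are classical, so the real content is at $x_0\in T_1$; there the closedness argument already handles the transmission condition.

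\textbf{Step 1: $u$ is bounded.} From the barriers, every $v\in\subsol$ satisfies $v\le\overline{\varphi}$, and $\underline{\varphi}\in\subsol$. Hence $u$, $u^{\star}$ and $u_{\star}$ are bounded on $\overline{B}_1$ and $u^{\star}\in\uppers(\overline{B}_1)$.

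\textbf{Step 2: approximating sequence.} Fix $x_0\in B_1$. By definition of $u^{\star}$, choose points $y_k\to x_0$ and $v_k\in\subsol$ with $v_k(y_k)\ge u(y_k)-1/k$ and $u(y_k)\to u^{\star}(x_0)$. Each $v_k$ is a viscosity subsolution to~\eqref{eq:lul} in $B_1$, and $v_k\le u\le u^{\star}$. Let $U:=\limsup^{\star}v_k$. Proposition~\ref{prop:closed} shows that $U$ is a subsolution in $B_1$. Moreover, $U\le (u^{\star})^{\star}=u^{\star}$, and $U(x_0)\ge\limsup v_k(y_k)=u^{\star}(x_0)$, so $U(x_0)=u^{\star}(x_0)$.

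\textbf{Step 3: transfer the test.} Let $P$ be a piecewise quadratic function touching $u^{\star}$ from above at $x_0$. Since $U\le u^{\star}$ and $U(x_0)=u^{\star}(x_0)$, the same $P$ touches $U$ from above at $x_0$. The subsolution property of $U$ then gives the required inequality: the bulk inequality if $x_0\in B_1^{\pm}$, or $G(\nabla P^{+}(x_0),\theta\nabla P^{-}(x_0))\ge g(x_0)$ if $x_0\in T_1$.

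This choice of sequence depends on $x_0$. That is acceptable, because the subsolution property is pointwise, with a test at each point.

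The main obstacle is the application of Proposition~\ref{prop:closed}, and in particular whether its proof tolerates $\limsup^{\star}$ where the touching point is attained in the limit along a sequence rather than at a fixed point. Its proof perturbs $P$ by $2\vep|x-x_0|^2-\vep r^2$ and uses $U-v_k<\vep r^2$ in $B_r(x_0)$ for large $k$. Taken literally, that uniform smallness is false for a general half-relaxed limit. I would redo the argument directly instead:
\begin{enumerate}
\item use Lemma~\ref{lem:trick} to make $P$ touch strictly, with $F^{\pm}(D^2P^{\pm})<f^{\pm}(x_0)$;
\item since the touching is strict, on $\partial B_r(x_0)$ we have $\limsup_k\sup(v_k-P)<0$;
\item since $v_k(y_k)-P(y_k)\to0$, the maximum of $v_k-P$ over $\overline{B}_r(x_0)$ is attained at interior points $x_k\to x_0$;
\item by the strict bulk inequality for $P$, these $x_k$ must eventually lie on $T$ when $x_0\in T_1$;
\item pass to the limit in $G(\nabla P^{+}(x_k),\theta\nabla P^{-}(x_k))\ge g(x_k)$ using the continuity of $g$ and of $\nabla P^{\pm}$.
\end{enumerate}
This gives the subsolution inequality for $u^{\star}$ at $x_0$ directly, without relying on the uniform-convergence step.
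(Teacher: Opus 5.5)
Your direct argument (items 1--5) is correct and is essentially the paper's proof: the paper likewise touches members $v_k\in\subsol$ from above at points $x_k\to x_0$ by a vertical translate of a strictified test function. It uses $P+\frac12\delta|x-x_0|^2$ for strictness instead of Lemma~\ref{lem:trick}, then uses the bulk inequality from Lemma~\ref{lem:trick} to force $x_k\in T_1$ and passes to the limit in the transmission inequality. The preliminary route through Proposition~\ref{prop:closed} with $U=\limsup^{\star}v_k$ is unnecessary (though sound given a correct proof of closedness), and your remark about the uniform-smallness step in that proof is a fair one.
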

\begin{proof}
We only check the transmission condition.
Let $P$ be a piecewise quadratic function touching $u^{\star}$ from above at $x_0 \in T_1$.
By Lemma~\ref{lem:trick}, we may assume that
\begin{equation}
\label{truco}
F^{\pm}(D^2 P^{\pm}) < - \|f^{\pm}\|_{L^{\infty}}(B^{\pm}_{r_0}(x_0)),
\end{equation}
for a small $r_0 >0$.

\medskip

Fix $\delta >0$ small.
By definition of $u^{\star}$ and $u$, a vertical translation of $P_{\delta}(x) := P(x) + \frac{1}{2}\delta |x-x_0|^2$ touches an \usc subsolution $v_k$ from above at $x_k \in B_{r_k}(x_0)$, with $r_k \downarrow 0$.
Taking $\delta >0$ sufficiently small, since $v_k$ are subsolutions, by~\eqref{truco} we have that $x_k \in T_{r_0}(x_0)$ for infinitely many $k$ and at those points
\[
g(x_k) \leq G(\nabla P_{\delta}^{+}(x_k), \theta \nabla P_{\delta}^{-}(x_k))
\leq 
G(\nabla P^{+}(x_k), \theta \nabla P^{-}(x_k))
+ 2 \delta \mu r_k.
\]
Since $x_k \to x_0$ and $r_k \to 0$ as $k \to \infty$, passing to the limit yields the claim.
\end{proof}

\medskip

Combining~\eqref{bdy:thing} and Lemma~\ref{lem:sub}, we see that $u^{\star} \in \subsol$, and by definition of $u$ we conclude
\begin{equation}
\label{u:usc}
u = u^{\star} \quad \text{ on } \overline{B}_1.
\end{equation}

\medskip

To finish the argument, it remains to show that $u_{\star}$ is a supersolution:

\begin{lem}
\label{lem:sup}
The \lsc envelope $u_{\star}$ is a viscosity supersolution to~\eqref{eq:lul}.
\end{lem}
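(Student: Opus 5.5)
We argue by contradiction, using the classical Perron bump construction adapted to the interface. The bulk case $x_0 \in B_1^{\pm}$ is standard, so we concentrate on the transmission condition. Suppose a piecewise quadratic $P$ touches $u_{\star}$ from below at $x_0 \in T_1$, but
\[
G(\nabla P^{+}(x_0), \theta \nabla P^{-}(x_0)) > g(x_0).
\]
By the supersolution analogue of Lemma~\ref{lem:trick}, we may assume two further properties. First, $P$ touches $u_\star$ strictly. Second, $F^{\pm}(D^2 P^{\pm}) > \|f^{\pm}\|_{L^\infty}$, so that $P$ is a strict classical subsolution of the bulk equations near $x_0$. Strictness of the transmission inequality is preserved under the modification of that lemma for $\delta$ small, since the gradients change by at most $O(\delta)$.

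Next we make $P$ a strict subsolution on a whole neighbourhood and lift it. Set $P_\delta := P - \frac12\delta|x-x_0|^2$. Since $G$ has constant coefficients and is continuous, $\nabla P^{\pm}$ is continuous, and $g$ is continuous, there is $r>0$ with $B_r(x_0) \subset B_1$ such that $P_\delta + c$ is a classical, hence viscosity, strict subsolution of~\eqref{eq:lul} in $B_r(x_0)$ for every constant $c$. Here the transmission condition holds on $T_r(x_0)$ and the bulk inequalities hold in $B_r^{\pm}(x_0)$. On $\partial B_r(x_0)$ we have $u \geq u_\star \geq P + \eta > P_\delta + \eta$ for some $\eta>0$, by strict touching and lower semicontinuity on the compact sphere. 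Define
\[
V := \max\{u^{\star}, P_\delta + \eta/2\} \text{ in } B_r(x_0), \qquad V := u^{\star} \text{ elsewhere in } \overline{B}_1.
\]
Near $\partial B_r(x_0)$ we have $u^\star > P_\delta + \eta/2$, so $V$ is \usc and coincides with $u^\star$ there.

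The key step is to check that $V$ is a subsolution. By Lemma~\ref{lem:sub} and~\eqref{u:usc}, $u^\star = u$ is a subsolution. The maximum of two subsolutions is a subsolution, including at interface points: an admissible test function touching $V$ from above at a point touches whichever function realizes the max there, and the definition is pointwise. Since $V = \phi$ on $\partial B_1$ by~\eqref{bdy:thing}, we get $V \in \subsol$, and hence $V \leq u$.

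It remains to find a point where $V > u$. Pick $x_k \to x_0$ with $u(x_k) \to u_\star(x_0) = P(x_0)$. Then
\[
P_\delta(x_k) + \eta/2 \to P(x_0) + \eta/2 > \lim_k u(x_k),
\]
so $V(x_k) > u(x_k)$ for large $k$, a contradiction. The same argument applies to the bulk case.

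The main obstacle is the interface localization in the second step. One must make sure that the strict transmission inequality at $x_0$ propagates to nearby interface points. This requires continuity of $g$ and the constant coefficients of $G$, and it uses that $P$ is merely piecewise quadratic, so the one-sided gradients vary continuously along $T$. One must also confirm that the max operation respects the piecewise test-function definition at $T$.
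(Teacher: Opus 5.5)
Your proposal is correct and follows essentially the same Perron-bump argument as the paper. You take a piecewise quadratic strict subsolution that lies above $u_{\star}(x_0)$ at $x_0$ and below $u$ on $\partial B_r(x_0)$, take its maximum with $u = u^{\star}$, and contradict the maximality of $u$. The differences are cosmetic. You get the gap on the sphere from strict touching plus lower semicontinuity, then shift vertically by $\eta/2$, whereas the paper builds both the gap and the lift directly into $P_\delta = P - \frac12\delta|x-x_0|^2 + \frac14\delta r^2$. You also spell out two points the paper leaves implicit: that a classical strict subsolution is a viscosity subsolution at interface points, and that the maximum of two subsolutions is again a subsolution.
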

\begin{proof}
Otherwise, there is a piecewise quadratic function $P$ touching $u_{\star}$ from below at $x_0 \in T_1$ (the case $x_0 \notin T_1$ is known) and violating the transmission condition at that point, whence by continuity we obtain
\begin{equation}
\label{eq:que}
G(\nabla P^{+}(x), \theta \nabla P^{-}(x))  - g(x) \geq \eta > 0 \qquad \text{ for all } x \in T_{r}(x_0),
\end{equation}
for some small $r > 0$ and $\eta > 0$.
Arguing as in the proof of Lemma~\ref{lem:trick} we may assume that
\begin{equation}
\label{eq:subb0}
F^{\pm}(D^2 P^{\pm}) >  \|f^{\pm}\|_{L^{\infty}(B^{\pm}_{r}(x_0))}.
\end{equation}

\medskip

For $\delta > 0$, we modify $P$ by $P_{\delta}(x) := P(x) - \frac{1}{2}\delta |x-x_0|^2 + \frac{1}{4}\delta r^2$.
By~\eqref{eq:que},~\eqref{eq:subb0}, and uniform ellipticity, we can choose $\delta> 0$ sufficiently small such that
\[
G(\nabla P_{\delta}^{+}(x), \theta \nabla P_{\delta}^{-}(x)) 
\geq G(\nabla P^{+}(x), \theta \nabla P^{-}(x)) -2 \mu \delta r 
\geq g(x) + \eta -2 \mu \delta r
> g(x)
\]
for all $x \in T_r(x_0)$, and
\[
F^{\pm}(D^2 P^{\pm}_{\delta}) 
\geq F^{\pm}(D^2 P^{\pm}) - \Lambda \delta 
> \|f^{\pm}\|_{L^{\infty}(B^{\pm}_{r}(x_0))}.
\]
Therefore, $P_{\delta}$ is a strict subsolution to~\eqref{eq:lul} in $B_{r}(x_0)$.

\medskip

Since $P_{\delta} < u_{\star} \leq u$ on $\partial B_r(x_0)$, it follows that the function
\[
v = \begin{cases}
\max\{u, P_{\delta}\} & \text{ in } B_r(x_0)\\
u & \text{ on } \overline{B}_1 \setminus B_r(x_0)
\end{cases}
\]
is in the set~$\subsol$.
However, since $P_{\delta}(x_0) > u_{\star}(x_0) = \lim_{\rho\downarrow 0} \inf_{B_\rho(x_0)} u$, we can find a point $y \in B_r(x_0)$ where $u(y) < P_{\delta}(y) = v(y)$, contradicting the definition of $u$.
We conclude that no such piecewise quadratic function $P$ can exist, hence $u_{\star}$ is a subsolution.
\end{proof}

\medskip

Thanks to~\eqref{u:usc} and Lemma~\ref{lem:sup}, by comparison principle $u - u_{\star} \in \underline{S}(\lambda, \Lambda, \mu, \theta; 0,0,0)$ in $B_1$, whence by~\eqref{bdy:thing} and the ABP we deduce that $u \leq u_{\star}$ on $\overline{B}_1$.
Therefore, $u = u^{\star} = u_{\star} \in C(\overline{B}_{1})$ is a viscosity solution to~\eqref{eq:lul}.


\section*{Acknowledgments}

The author would like to thank Mar\'{i}a Soria-Carro and Kai Zhang for helpful discussions.


\begin{thebibliography}{10}

\bibitem{B}
M. Borsuk.
\textit{Interface problems for elliptic second-order equations in non-smooth domains}.
Second edition. Front. Math. Birkh\"{a}user/Springer, Cham, 2024.

\bibitem{CEF}
X.~Cabr\'{e}, I.~U.~Erneta, and J.C.~Felipe-Navarro.
Null-Lagrangians and calibrations for general nonlocal functionals and an application to the viscosity theory.
\textit{J. Funct. Anal.} 289 (2025), no. 9, Paper No. 111086.

\bibitem{CC}
L.~A.~Caffarelli and X.~Cabr\'{e}.
\emph{Fully nonlinear elliptic equations}.
Amer. Math. Soc. Colloq. Publ. 43, American Mathematical Society, Providence, RI, 1995.

\bibitem{CIL}
M.~G.~Crandall, H.~Ishii, and P.-L.~Lions.
User's guide to viscosity solutions of second order partial differential equations.
\textit{Bull. Amer. Math. Soc.} (N.S.) 27 (1992), no.~1, 1--67.

\bibitem{DFSfb}
D.~De Silva, F.~Ferrari, and S.~Salsa.
Free boundary regularity for fully nonlinear non-homogeneous two-phase problems.
\textit{J. Math. Pures Appl.} (9) 103 (2015), no.~3, 658--694.

\bibitem{DFS}
D.~De Silva, F.~Ferrari, and S.~Salsa.
Regularity of transmission problems for uniformly elliptic fully nonlinear equations.
Proceedings of the International Conference ``Two nonlinear days in Urbino 2017".
\textit{Electron. J. Differ. Equ. Conf.} 25 (2018), 55--63.

\bibitem{E}
I.~U.~Erneta.
Regularity theory for fully nonlinear oblique transmission problems.
Preprint arXiv 2606.14858 (2026).

\bibitem{MW}
F.~Ma and L.~Wang.
Boundary first order derivative estimates for fully nonlinear elliptic equations.
\textit{J. Differential Equations} 252 (2012), no.~2, 988--1002.

\bibitem{L} 
G.~M.~Lieberman.
Mixed boundary value problems for elliptic and parabolic differential equations of second order.
\textit{J. Math. Anal. Appl.} 113 (1986), no.~2, 422--440.

\bibitem{LPerron}
G.~M.~Lieberman.
The Perron process applied to oblique derivative problems.
\textit{Adv. in Math.} 55 (1985), no.~2, 161--172.

\bibitem{LZ}
D.~Li and K.~Zhang.
Regularity for fully nonlinear elliptic equations with oblique boundary conditions.
\textit{Arch. Ration. Mech. Anal.} 228 (2018), no.~3, 923--967.

\bibitem{JLL}
J.-L.~Lions.
Contribution \`{a} un probl\`{e}me de M. M. Picone.
\textit{Ann. Mat. Pura Appl.} 41 (1956), 201--219.

\bibitem{Miller}
K. Miller.
Barriers on cones for uniformly elliptic operators.
\textit{Ann. Mat. Pura Appl.} 76 (1967), 93--105.

\bibitem{P} 
M.~Picone.
Sur un probl\`{e}me nouveau pour l'\'{e}quation lin\'{e}aire aux d\'{e}riv\'{e}es partielles de la th\'{e}orie math\'{e}matique classique de l'\'{e}lasticit\'{e}.
Second \textit{Colloque sur les \'{e}quations aux d\'{e}riv\'{e}es partielles}, Bruxelles (1954).

\bibitem{SS}
M.~Soria-Carro and P.~R.~Stinga.
Regularity of viscosity solutions to fully nonlinear elliptic transmission problems.
\textit{Adv. Math.} 435 (2023), Paper No. 109353.

\end{thebibliography}
\end{document}